\theoremstyle{plain}
  \newtheorem{thm}{Theorem}[section]
  \newtheorem{lem}[thm]{Lemma}
  \newtheorem{cor}[thm]{Corollary}
  \newtheorem{prop}[thm]{Proposition}
  \newtheorem{clm}[thm]{Claim}
\theoremstyle{definition}
  \newtheorem{defn}[thm]{Definition}
\theoremstyle{remark}
  \newtheorem{rem}[thm]{Remark}
\numberwithin{equation}{section}
\newcommand\RedeclareMathOperator{%
  \@ifstar{\def\rmo@s{m}\rmo@redeclare}{\def\rmo@s{o}\rmo@redeclare}%
}
\newcommand\rmo@redeclare[2]{%
  \begingroup \escapechar\m@ne\xdef\@gtempa{{\string#1}}\endgroup
  \expandafter\@ifundefined\@gtempa
     {\@latex@error{\noexpand#1undefined}\@ehc}%
     \relax
  \expandafter\rmo@declmathop\rmo@s{#1}{#2}}
\newcommand\rmo@declmathop[3]{%
  \DeclareRobustCommand{#2}{\qopname\newmcodes@#1{#3}}%
}
\newcommand{\field}[1]{\mathbb{#1}}
\newcommand{\C}{\field{C}}
\newcommand{\R}{\field{R}}
\newcommand{\N}{\field{N}}
\renewcommand{\H}{\field{H}}
\DeclareMathOperator{\ObsDiam}{ObsDiam}
\DeclareMathOperator{\diam}{diam}
\DeclareMathOperator{\supp}{supp}
\DeclareMathOperator{\dP}{\mathit{d}_\mathrm{P}}
\DeclareMathOperator{\dconc}{\mathit{d}_\mathrm{conc}}
\DeclareMathOperator{\dH}{\mathit{d}_\mathrm{H}}
\DeclareMathOperator{\dTV}{\mathit{d}_\mathrm{TV}}
\DeclareMathOperator{\vol}{vol}
\DeclareMathOperator{\diag}{diag}
\DeclareMathOperator{\Ree}{R}
\DeclareMathOperator{\I}{I}
\DeclareMathOperator{\J}{J}
\DeclareMathOperator{\K}{K}
\RedeclareMathOperator{\i}{i}
\RedeclareMathOperator{\j}{j}
\RedeclareMathOperator{\k}{k}
\RedeclareMathOperator{\Re}{Re}
\newcommand{\dKF}[1][]{{d_{\rm KF}^{#1}}}
\newcommand{\PV}{\mathrm{P}V}
\newcommand{\cX}{\mathcal{X}}
\newcommand{\cL}{\mathcal{L}}
\newcommand{\Lip}{\mathcal{L}\mathit{ip}}
\newcommand{\cP}{\mathcal{P}}
\DeclareMathOperator{\tr}{tr}
\DeclareMathOperator{\T}{{}^{T}\!\!}
\newcommand{\lr}[2]{\left\langle{#1},{#2}\right\rangle}
\begin{document}
\title{High-dimensional metric-measure limit of Stiefel and Grassmann manifolds}

\author{Takashi Shioya}
\address{Mathematical Institute, Tohoku University, Sendai 980--8578,
  JAPAN}
\email{shioya@math.tohoku.ac.jp}

\author{Asuka Takatsu}
\address{Department of Mathematics and Information Sciences, Tokyo Metropolitan University, Tokyo 192--0397,
JAPAN}
\email{asuka@tmu.ac.jp}

\date{\today}

\keywords{metric measure space, concentration of measure,
Gaussian space, observable diameter, pyramid}

\subjclass[2010]{Primary 53C23}

\thanks{This work was supported by JSPS KAKENHI Grant Numbers
26400060, 15K17536, 15H05739.}

\maketitle

\begin{abstract}
We study the high-dimensional limit of (projective) Stiefel and Grassmann manifolds as metric measure spaces in Gromov's topology.
The limits are either the infinite-dimensional Gaussian space
or its quotient by some mm-isomorphic group actions,
which are drastically different from the manifolds.
As a corollary, we obtain some asymptotic estimates
of the observable diameter of (projective) Stiefel and Grassmann manifolds.
\end{abstract}


\section{Introduction}

Gromov developed the metric geometric theory of metric measure spaces,
say \emph{mm-spaces} (see \cite{Gro:green,Sy:mmg}).
There he defined a concept of convergence,
which we prefer to call \emph{weak convergence},
of mm-spaces
by the convergence of the sets of $1$-Lipschitz continuous functions on the spaces.
This is a weaker version of
measured Gromov-Hausdorff convergence.
In fact, a measured Gromov-Hausdorff convergent
sequence converges weakly to the same limit, however
the converse does not necessarily hold.
The idea of the definition of weak convergence came from
the concentration of measure phenomenon due to L\'evy and Milman
(see \cite{GroMil,Levy,Mil:heritage,Le}).
In fact, since any function
on a one-point mm-space is constant,
a sequence $\{X_n\}_{n=1}^\infty$ of mm-spaces converges weakly to
a one-point mm-space
if and only if
any $1$-Lipschitz function on $X_n$ is almost constant
for all sufficiently large $n$, which is just the concentration of measure
phenomenon.
L\'evy's celebrated lemma \cite{Levy} is rephrased as that the sequence of unit spheres in
Euclidean spaces
converges weakly to a one-point mm-space as dimension diverges to infinity.

It is a natural problem to study the weak limit of
a non-Gromov-Hausdorff precompact sequence of specific manifolds,
such as homogeneous manifolds, as dimension diverges to infinity.
In our main theorems,
we observe that the high-dimensional weak limits of
(projective) Stiefel and Grassmann manifolds
are drastically different from
the manifolds in the sequence.
This kind of phenomenon is never seen in the Gromov-Hausdorff
convergence.

For $n=1,2,\dots,\infty$, we call the mm-space
$\Gamma^n := (\R^n,\|\cdot\|,\gamma^n)$
the \emph{$n$-dimensional} (\emph{standard}) \emph{Gaussian space},
where $\|\cdot\|$ is the $l^2$ (or Euclidean) norm
and $\gamma^n$ the $n$-dimensional standard Gaussian measure
on $\R^n$.  If $n = \infty$, then $\|\cdot\|$ takes values in $[\,0,+\infty\,]$.
Let $F$ be one of $\R$, $\C$, and $\H$, where $\H$ is the algebra of quaternions,
and let $M_{N,n}^F$ denote the set of $N \times n$ matrices over $F$.
Recall that the \emph{$(N,n)$-Stiefel manifold over $F$}, say $V^F_{N,n}$,
is defined to be
the submanifold of $M_{N,n}^F$ consisting of matrices with
orthonormal column vectors.
We equip Stiefel manifolds with the mm-structure induced from
the Frobenius norm and the Haar probability measure.
We set $N^F := N \cdot \dim_\R F$ for a number $N$.
Let $\{n_N\}_{N=1}^\infty$ be a sequence of positive integers
such that $n_N \le N$ for all $N$.
We consider the following condition ($*$) for the sequence $\{n_N\}$.
\[
\text{There exists a number $c \in (0,1)$ such that}
\ \ %
\sup_N \left( 2\log n_N - \frac{c}{4} \sqrt{\frac{N}{n_N^3}} \right) < \infty.
\tag{$*$}
\]
Note that we have ($*$) for $n_N = O(N^p)$, $0 \le p < 1/3$ (see Remark \ref{rem:condi}\eqref{c3}).
One of our main theorems is stated as follows.

\begin{thm} \label{thm:Stiefel}
If $\{n_N\}$ satisfies {\rm($*$)},
then the $(N,n_N)$-Stiefel manifold $V_{N,n_N}^F$ with distance multiplied by $\sqrt{N^F-1}$ 
converges weakly to the infinite-dimensional Gaussian space $\Gamma^\infty$ as $N \to \infty$.
\end{thm}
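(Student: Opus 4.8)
\medskip
\noindent\textbf{Plan of proof.}
Set $N^F=N\dim_\R F$ and let $\tilde V_N$ denote the mm-space whose underlying metric space is $V_{N,n_N}^F$ with the Frobenius distance multiplied by $\sqrt{N^F-1}$ and whose measure is the Haar probability measure. By the characterization of weak convergence through pyramids (see \cite{Sy:mmg}), the theorem is equivalent to the two inclusions
\begin{equation*}
\cP_{\Gamma^\infty}\subseteq\liminf_{N\to\infty}\cP_{\tilde V_N}
\qquad\text{and}\qquad
\limsup_{N\to\infty}\cP_{\tilde V_N}\subseteq\cP_{\Gamma^\infty},
\end{equation*}
which I would establish separately: the first is soft and insensitive to the growth of $n_N$, whereas the second is the heart of the matter and is where ($*$) enters.

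For the first inclusion, recall the high-dimensional limit of spheres: the round sphere $S^{m-1}(\sqrt m)\subseteq\R^m$ of radius $\sqrt m$, with its uniform probability measure, converges weakly to $\Gamma^\infty$ as $m\to\infty$ (the Poincar\'e, or Maxwell--Borel, limit). For each $N$, the map sending a frame $X\in V_{N,n_N}^F$ to its first column vector, viewed as a point of the unit sphere of $F^N\cong\R^{N^F}$ rescaled by $\sqrt{N^F-1}$, is $1$-Lipschitz from $\tilde V_N$ onto $S^{N^F-1}(\sqrt{N^F-1})$ and pushes the Haar measure forward to the uniform measure, since the first column of a Haar-random frame is uniformly distributed on the unit sphere of $F^N$. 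Hence $S^{N^F-1}(\sqrt{N^F-1})$ is Lipschitz dominated by $\tilde V_N$, so $\cP_{S^{N^F-1}(\sqrt{N^F-1})}\subseteq\cP_{\tilde V_N}$ for all $N$; since $\sqrt{N^F-1}/\sqrt{N^F}\to1$, these rescaled spheres again converge weakly to $\Gamma^\infty$, and therefore $\cP_{\Gamma^\infty}=\lim_N\cP_{S^{N^F-1}(\sqrt{N^F-1})}\subseteq\liminf_N\cP_{\tilde V_N}$.

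For the second inclusion the plan is to realize each $\tilde V_N$, up to a vanishing error, as a $1$-Lipschitz image of a finite-dimensional Gaussian space. Let $A$ be an $N\times n_N$ matrix over $F$ with independent standard Gaussian entries, so that $A$ is distributed according to $\gamma^{(Nn_N)^F}$ on $M_{N,n_N}^F\cong\R^{(Nn_N)^F}$, and let $\mathrm{GS}(A)\in V_{N,n_N}^F$ be the frame obtained by Gram--Schmidt orthonormalization of the columns of $A$ (one may equivalently take the orthogonal factor of the polar decomposition of $A$). Because $\gamma^{(Nn_N)^F}$ and the construction are both invariant under the left action of the orthogonal/unitary/symplectic group of $F^N$, the frame $\mathrm{GS}(A)$ is Haar-distributed, so the map $A\mapsto\sqrt{N^F-1}\,\mathrm{GS}(A)$ pushes $\gamma^{(Nn_N)^F}$ forward exactly onto the measure of $\tilde V_N$. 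A typical such $A$ has nearly orthogonal columns of norm $\approx\sqrt{N^F}$, so $\mathrm{GS}$ essentially rescales $A$ by $(N^F)^{-1/2}$ and the map above is $(1+\varepsilon_N)$-Lipschitz on a large set, with $\varepsilon_N$ small. The only obstruction is that $\mathrm{GS}$ is not globally Lipschitz — its distortion blows up where the columns of $A$ are far from orthonormal — so I would (i) introduce the bad set $B_N$ of matrices whose Gram matrix $A^{*}A$ deviates too much from $N^F\cdot I$; (ii) replace $\mathrm{GS}$ off $B_N$ by a genuinely $(1+\varepsilon_N)$-Lipschitz map defined on all of $\R^{(Nn_N)^F}$ via a Lipschitz extension; and (iii) bound, by Gaussian concentration for the singular values of an $N\times n_N$ Gaussian matrix, both $\gamma^{(Nn_N)^F}(B_N)$ and the consequent perturbation of the pushed-forward measure by a quantity that tends to $0$ exactly under ($*$) — there $2\log n_N$ absorbs the union bound over the $O(n_N^2)$ pairs of columns (orthogonalization steps) and $\tfrac{c}{4}\sqrt{N/n_N^3}$ plays the role of the concentration exponent. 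Granting this, $\tilde V_N$ lies within $\varepsilon_N$, in the concentration distance $\dconc$, of an mm-space Lipschitz dominated by the finite Gaussian $\Gamma^{(Nn_N)^F}$; since $\cP_{\Gamma^{(Nn_N)^F}}\subseteq\cP_{\Gamma^\infty}$, the set $\cP_{\Gamma^\infty}$ is closed, and $\varepsilon_N\to0$, the standard comparison lemmas for pyramids yield $\limsup_N\cP_{\tilde V_N}\subseteq\cP_{\Gamma^\infty}$, completing the proof. The main obstacle is exactly steps (ii)--(iii): turning the Gram--Schmidt (or polar) construction, which transports the Gaussian measure to the Haar measure exactly but is only locally Lipschitz, into an honestly $1$-Lipschitz, nearly measure-preserving map out of a finite Gaussian requires quantitative control of its non-Lipschitz locus and of the singular values of a Gaussian $N\times n_N$ matrix, and it is precisely the error bound thereby obtained that forces the growth restriction ($*$) on $n_N$ (allowing, in particular, $n_N=O(N^p)$ with $0\le p<1/3$; cf.\ Remark \ref{rem:condi}).
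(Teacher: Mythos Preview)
Your plan is essentially the paper's: the easy direction via sphere domination (the introduction sketches exactly this, though the formal proof uses Watson's generalized Maxwell--Boltzmann law, Proposition~\ref{thm:MB}, since the equivariant projection $\pi^N_l(n)$ is needed anyway for the Grassmann theorem), and the hard direction via the polar factor of a Gaussian matrix on a high-probability good set where the Gram matrix is close to $(N^F-1)\,I$.

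One technical point in your step~(ii) deserves care. Kirszbraun extends Lipschitz maps only into Hilbert targets, so any extension of $A\mapsto\sqrt{N^F-1}\,\mathrm{GS}(A)$ must land in the ambient $M_{N,n_N}^F$ rather than in the Stiefel manifold, and composing afterwards with a retraction onto $V^F_{N,n_N}$ would reintroduce exactly the non-Lipschitz locus you excised. This can still be pushed through --- the pushforward of the extended map then differs from Haar only by $\gamma(B_N)$ in total variation --- but the paper sidesteps the issue more cleanly and you should too: it restricts to the good set $X^F_{N,n,\varepsilon,\theta}$, \emph{normalizes} $\gamma^{N^Fn_N}$ there, and observes that because this set is invariant under the left $U^F(N)$-action and the polar factor $Z\mapsto Q^F(Z)$ is $U^F(N)$-equivariant, the pushforward of the normalized restricted Gaussian is \emph{exactly} the Haar measure (Lemma~\ref{lem:push}). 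Hence $\tilde V_N$ is genuinely Lipschitz dominated by $(1-L_N)^{-1}$ times the good-set mm-space, and the latter is $\square$-close to $\Gamma^{N^Fn_N}$ simply because the good set carries nearly full Gaussian mass (Theorem~\ref{thm:fullmeas}). No extension is needed. The Lipschitz bound $L_N\to0$ itself comes from a perturbation estimate for the polar decomposition (Lemma~\ref{lemma:Li}, Corollary~\ref{prop:l}, Theorem~\ref{lem:1}) rather than from black-box singular-value concentration; your reading of~($*$) --- that $2\log n_N$ pays for the union bound over $O(n_N^2)$ inner-product constraints and $\tfrac{c}{4}\sqrt{N/n_N^3}$ is the Gaussian tail exponent --- is exactly how it enters in the proof of Lemma~\ref{lem:full}.
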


Note that the infinite-dimensional Gaussian space
is not an mm-space in the ordinary sense
and is defined as an element of a natural compactification
of the space of isomorphism classes of mm-spaces.
In the case where $n_N = 1$, the manifold $V_{N,n_N}^F = V_{N,1}^F$ is
a sphere in a Euclidean space, for which the weak limit
was obtained in \cite{Sy:mmg,Sy:mmlim}.

To specify the high-dimensional weak limit of Grassmann and projective Stiefel manifolds,
we need some notations.
Denote by $U^F(n)$ the $F$-unitary group of size $n$,
i.e., $U^F(n)$ is the orthogonal group if $F = \R$,
the complex unitary group if $F = \C$, and the quaternionic unitary group
if $F = \H$.
$U^F(n)$ acts on $M_{N,n}^F$ by right multiplication.
We define the infinite-dimensional Gaussian measures
on $M_{\infty,n}^F$ and $F^\infty$
by identifying $M_{\infty,n}^F$ and $F^\infty$ with $\R^\infty$
under natural isomorphisms.
The \emph{$U^F(1)$-Hopf action on $F^\infty$}
is defined as the $U^F(1)$ left multiplication.
Note that the \emph{$(N,n)$-Grassmann manifold over $F$}, say $G^F_{N,n}$,
is obtained
as the quotient of the $(N,n)$-Stiefel manifold over $F$ by the $U^F(n)$-action,
and that the \emph{projective $(N,n)$-Stiefel manifold over $F$}, say $\PV^F_{N,n}$,
is obtained
as the quotient of the $(N,n)$-Stiefel manifold over $F$
by the $U^F(1)$-Hopf action.
Note also that the $U^F(n)$-action and the $U^F(1)$-Hopf action
on the $(N,n)$-Stiefel manifold are both mm-isomorphic.
We equip Grassmann and projective Stiefel manifolds
with the quotient metric of the Frobenius
norm and the Haar probability measure (see Definition \ref{defn:quotient}).
Applying (the proof of) Theorem \ref{thm:Stiefel} we prove the following theorem.

\begin{thm} \label{thm:Gr-pS}
\begin{enumerate}
\item For any fixed positive integer $n$, as $N \to \infty$,
the $(N,n)$-Grassmann manifold $G_{N,n}^F$ over $F$
with distance multiplied by $\sqrt{N^F-1}$
converges weakly
to the quotient of
$(M_{\infty,n}^F,\|\cdot\|,\gamma^\infty)$
by the $F$-unitary group $U^F(n)$ of size $n$,
where $\|\cdot\|$ is the Frobenius norm.
\item If $\{n_N\}$ satisfies {\rm($*$)}, then
the projective $(N,n_N)$-Stiefel manifold $\PV_{N,n_N}^F$ over $F$
with distance multiplied by $\sqrt{N^F-1}$
converges weakly
to the quotient
of the infinite-dimensional Gaussian space $(F^\infty,\|\cdot\|,\gamma^\infty)$
by the $U^F(1)$-Hopf action.
\end{enumerate}
\end{thm}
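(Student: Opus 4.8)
The plan is to derive Theorem~\ref{thm:Gr-pS} from Theorem~\ref{thm:Stiefel} --- more precisely, from its proof --- by a quotient-stability argument. First I would record the elementary fact that rescaling the metric commutes with taking quotients: the quotient metric scales linearly and the Haar measure pushes forward to the quotient measure, so that $\sqrt{N^F-1}\,G_{N,n}^F=(\sqrt{N^F-1}\,V_{N,n}^F)/U^F(n)$ and $\sqrt{N^F-1}\,\PV_{N,n_N}^F=(\sqrt{N^F-1}\,V_{N,n_N}^F)/U^F(1)$ as mm-spaces. The feature of both group actions that is used throughout is that they are mm-isomorphic: since each group element acts as a measure-preserving isometry, the $G$-averaging operator $f\mapsto\int_G f(g\,\cdot)\,d\mu_G(g)$ carries $1$-Lipschitz functions to $G$-invariant $1$-Lipschitz functions, and the $G$-invariant $1$-Lipschitz functions on $X$ are exactly those that descend to $1$-Lipschitz functions on $X/G$; equivalently, the pyramid of $X/G$ is obtained from that of $X$ by retaining only the $G$-invariant $1$-Lipschitz functions.

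Second, I would upgrade Theorem~\ref{thm:Stiefel} to an \emph{equivariant} statement by inspecting its proof. For part~(1), where $n$ is fixed, so $n_N\equiv n$ trivially satisfies ($*$), I would arrange the approximating maps from $\sqrt{N^F-1}\,V_{N,n}^F$ to finite-dimensional Gaussian spaces to be the ``top $j$ rows'' maps $p_{N,j}\colon\sqrt{N^F-1}\,V_{N,n}^F\to(M_{j,n}^F,\|\cdot\|)$ (with the Frobenius norm): these are $(1+o(1))$-Lipschitz, push the Haar probability measure to measures converging to the standard Gaussian on $M_{j,n}^F$, and intertwine the right $U^F(n)$-multiplication on $V_{N,n}^F$ with the right $U^F(n)$-multiplication on $M_{j,n}^F$, which is an orthogonal transformation and hence preserves the Gaussian. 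This yields $\sqrt{N^F-1}\,V_{N,n}^F\to(M_{\infty,n}^F,\|\cdot\|,\gamma^\infty)$ weakly and $U^F(n)$-equivariantly. For part~(2), the approximating maps to $F^k$ in the proof of Theorem~\ref{thm:Stiefel} may be taken to be coordinate projections after vectorization; since the $U^F(1)$-Hopf action multiplies \emph{every} entry by the same unit scalar, such projections are automatically $U^F(1)$-equivariant and the induced scalar action on $F^k$ is again orthogonal, so under ($*$) we get $\sqrt{N^F-1}\,V_{N,n_N}^F\to(F^\infty,\|\cdot\|,\gamma^\infty)$ weakly and $U^F(1)$-equivariantly. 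This is precisely where the hypotheses diverge: the ``rows'' maps are $U^F(n)$-equivariant only for fixed $n$, whereas the ``coordinates'' maps are $U^F(1)$-equivariant for every $\{n_N\}$ satisfying ($*$), which is why part~(1) imposes no condition on $n$ while part~(2) needs ($*$).

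Third, with $\mathcal{P}/G$ defined as the increasing limit of the quotients $(M_{j,n}^F,\gamma^{\cdot})/U^F(n)$ (resp.\ $(F^k,\gamma^k)/U^F(1)$) --- legitimate because the equivariant projections exhibit each such quotient as dominated by the next --- I would prove the two pyramid inclusions. For the lower bound, any mm-space $Z$ in $\mathcal{P}/G$ is, up to arbitrarily small error, dominated by some $(M_{j,n}^F,\gamma^{\cdot})/U^F(n)$; precomposing the dominating map with $p_{N,j}$ and the quotient map produces a $U^F(n)$-invariant $(1+o(1))$-Lipschitz map out of $\sqrt{N^F-1}\,V_{N,n}^F$ whose pushforward of the Haar measure converges to $\mu_Z$ (using that pushforward of probability measures is continuous under the continuous quotient map, together with the second step), and which therefore descends to $\sqrt{N^F-1}\,G_{N,n}^F$; hence $Z$ lies in $\liminf_N\mathcal{P}_{\sqrt{N^F-1}G_{N,n}^F}$. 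For the upper bound, an mm-space $Z$ in $\limsup_N\mathcal{P}_{\sqrt{N^F-1}G_{N,n}^F}$ is realized, for infinitely many $N$ and up to errors tending to $0$, by $1$-Lipschitz maps out of $\sqrt{N^F-1}\,G_{N,n}^F$; pulling these back gives $G$-invariant $1$-Lipschitz maps on $\sqrt{N^F-1}\,V_{N,n}^F$, which one approximates, for large $j$, by a $U^F(n)$-invariant $1$-Lipschitz function of $p_{N,j}$ (the invariance of the approximant being restored by averaging over $U^F(n)$), and then the Arzel\`a--Ascoli theorem extracts, as $N\to\infty$, a $U^F(n)$-invariant $1$-Lipschitz map $(M_{j,n}^F,\gamma^{\cdot})\to Z$ with the correct pushforward, so $Z\in\mathcal{P}_{\mathcal{P}/G}$. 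Combining the two inclusions with the equivariant convergences of the second step gives parts~(1) and~(2).

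The main obstacle is this third step, and within it the upper bound: one must give rigorous meaning to the quotient of a \emph{pyramid} (rather than of a genuine mm-space) by a compact group, and one must carry out the extraction of a limiting invariant map compatibly with Gromov's topology --- in particular controlling how well a $1$-Lipschitz function on $V_{N,n}^F$ is captured by its finite-dimensional projection $p_{N,j}$, uniformly enough to let $j\to\infty$. This last point is essentially a quantitative reading of the proof of Theorem~\ref{thm:Stiefel}; once it is in hand, the measure-theoretic estimates (continuity of pushforward, convergence of the quotient measures) are routine, and the argument for part~(2) runs verbatim with $U^F(n)$ and $M_{\infty,n}^F$ replaced by $U^F(1)$ and $F^\infty$.
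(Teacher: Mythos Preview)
Your lower-bound direction (the equivariant coordinate/row projections $p_{N,j}$) matches the paper's approach exactly: these are the maps $\pi^N_l(n)$ of \S\ref{sec:Gauss}, and the paper uses Watson's result (Proposition~\ref{thm:MB}) together with Lemma~\ref{lem:dP-quotient} just as you describe.

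The upper bound, however, is where your proposal diverges from the paper and where there is a real gap. The paper does \emph{not} approximate $G$-invariant $1$-Lipschitz functions on the Stiefel manifold by functions that factor through $p_{N,j}$, nor does it run any Arzel\`a--Ascoli extraction. Instead it uses the second half of the proof of Theorem~\ref{thm:Stiefel}: the nearest-point projection $\Phi^N=\Phi^{N,n,F}_{\varepsilon,\theta}$ from a set $Y_N'=X^F_{N,n,\varepsilon_N,\theta}\subset M^F_{N,n}$ of Gaussian measure $\to 1$ onto the (scaled) Stiefel manifold, built via the polar decomposition $Z\mapsto\sqrt{N^F-1}\,Q^F(Z)$. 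This map goes in the \emph{opposite} direction from $p_{N,j}$, pushes the normalized Gaussian to the Haar measure (Lemma~\ref{lem:push}), and has Lipschitz constant $\to 1$ (Lemma~\ref{lem:Lip-const}). Crucially, $\Phi^N$ is equivariant for both the right $U^F(n)$-action and the $U^F(1)$-Hopf action, and $Y_N'$ is $G$-invariant; hence $\bar\Phi^N$ is a map between the quotients with the same properties (Lemma~\ref{lem:equivdomi}). This is packaged as Corollary~\ref{keycor}, which simply reapplies Lemma~\ref{keylem} to the quotient spaces $\bar X_N,\bar Y_N,\bar Y_N'$. No function-approximation step is needed.

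Your proposed route for the upper bound is not obviously salvageable without this ingredient. The projection $p_{N,j}$ is highly non-injective, and there is no a priori reason a $1$-Lipschitz function on $X^F_{N,n}$ should be well approximated by one factoring through $p_{N,j}$; weak convergence of pyramids gives closeness of $\Lip_1$-sets only after passing to abstract parameters of $[0,1)$, not through the specific maps $p_{N,j}$. The ``quantitative reading of the proof of Theorem~\ref{thm:Stiefel}'' you invoke would, once unpacked, lead you straight to $\Phi^N$ anyway---and at that point composing with $\Phi^N$ (which is $G$-equivariant) replaces your entire third step. I would recommend abandoning the Arzel\`a--Ascoli argument and instead observing that the polar-decomposition map from the proof of Theorem~\ref{thm:Stiefel} is $G$-equivariant; then Corollary~\ref{keycor} (which you should state and prove) finishes both parts at once.
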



The reason why we fix $n$ in Theorem \ref{thm:Gr-pS}(1)
is that we do not know the weak convergence of the quotient space of 
$(M_{\infty,n}^F,\|\cdot\|,\gamma^\infty)$ by $U^F(n)$ as $n\to\infty$.
If it converges weakly, then the $(N,n_N)$-Grassmann manifold
also converges weakly to the same limit, provided $\{n_N\}$ satisfies ($*$).

In the case where $n_N = 1$, the projective $(N,1)$-Stiefel manifold over $F$
is just the projective space over $F$, for which
the weak limit was obtained in \cite{Sy:mmlim}.

Theorems \ref{thm:Stiefel} and \ref{thm:Gr-pS}
are also true for any subsequence of $\{N\}$.
The proofs are the same.

The observable diameter of an mm-space is a quantity of how much
the measure of the mm-space concentrates
(see Definition \ref{defn:ObsDiam}).
As a corollary to the above theorems, we have
some asymptotic estimates for the observable diameter of
(projective) Stiefel and Grassmann manifolds.

\begin{cor} \label{cor:ObsDiam}
If $\{n_N\}$ satisfies {\rm($*$)}, then we have, for any $0 < \kappa < 1$,
\begin{align*}
\lim_{N\to\infty} \sqrt{N^F} \ObsDiam(V_{N,n_N}^F;-\kappa)
&= 2\,D^{-1}\left(1-\frac{\kappa}{2}\right), \tag{1}\\
\limsup_{N\to\infty} \sqrt{N^F} \ObsDiam(G_{N,n_N}^F;-\kappa)
&\le 2\,D^{-1}\left(1-\frac{\kappa}{2}\right), \tag{2}\\
\liminf_{N\to\infty} \sqrt{N^F} \ObsDiam(G_{N,n}^F;-\kappa)
&\ge e^{1/2}(1-\kappa) \qquad\text{for any fixed $n$}, \tag{3}\\
\limsup_{N\to\infty} \sqrt{N^F} \ObsDiam(\PV_{N,n_N}^F;-\kappa)
&\le 2\,D^{-1}\left(1-\frac{\kappa}{2}\right), \tag{4}\\
\liminf_{N\to\infty} \sqrt{N^F} \ObsDiam(\PV_{N,m_N}^F;-\kappa)
&\ge e^{1/2}(1-\kappa) \quad\text{for any $\{m_N\}$ with $m_N \le N$}, \tag{5}
\end{align*}
where
\[
D(r) := \gamma^1((\,-\infty,r\,]) = \int_{-\infty}^r \frac{1}{\sqrt{2\pi}} e^{-\frac{x^2}{2}}\;dx
\]
is the cumulative distribution function of $\gamma^1$.
\end{cor}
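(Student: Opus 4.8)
My plan is to derive all five estimates from Theorem~\ref{thm:Stiefel}, the behaviour of the observable diameter under weak convergence, and two elementary lemmas on partial diameters of measures on $\R$. Write $\diam(\nu;1-\kappa):=\inf\{\diam A : A\subset\R\text{ Borel},\ \nu(A)\ge1-\kappa\}$ for a Borel probability measure $\nu$ on $\R$, so that $\ObsDiam(X;-\kappa)=\sup_f\diam(f_*\mu_X;1-\kappa)$ over $1$-Lipschitz $f\colon X\to\R$. The first fact I use is that $\diam(\nu;1-\kappa)\le\liminf_n\diam(\nu_n;1-\kappa)$ whenever $\nu_n\to\nu$ weakly: if $A_n$ nearly realizes $\diam(\nu_n;1-\kappa)$ with $\diam A_n$ bounded, its endpoints cannot escape to infinity, and a slightly enlarged limiting closed interval captures $\nu$-mass $\ge1-\kappa$ by the portmanteau theorem. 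The second is that if $\nu$ has density $g$ with respect to Lebesgue measure, then $\diam(\nu;1-\kappa)\ge(1-\kappa)/\|g\|_\infty$, since any $A$ with $\nu(A)\ge1-\kappa$ lies in an interval of length $\diam A$ on which $\nu$ has mass $\le\|g\|_\infty\,\diam A$. Applied to the $\chi$-distribution with $k$ degrees of freedom, whose density $g_k(r)=r^{k-1}e^{-r^2/2}/(2^{k/2-1}\Gamma(k/2))$ attains its maximum at $r=\sqrt{k-1}$, an elementary computation gives $\|g_k\|_\infty\le\|g_2\|_\infty=e^{-1/2}$ for every integer $k\ge2$; hence $\diam(\chi_k\text{-law};1-\kappa)\ge e^{1/2}(1-\kappa)$ for $k\ge2$, with asymptotic equality as $\kappa\to1$ when $k=2$ (the Rayleigh case), which is where the constant $e^{1/2}$ comes from.

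For (1): Theorem~\ref{thm:Stiefel} gives weak convergence of $\sqrt{N^F-1}\cdot V_{N,n_N}^F$ to $\Gamma^\infty$, so by continuity of the observable diameter under weak convergence $\sqrt{N^F-1}\,\ObsDiam(V_{N,n_N}^F;-\kappa)\to\ObsDiam(\Gamma^\infty;-\kappa)$; and $\ObsDiam(\Gamma^n;-\kappa)=2D^{-1}(1-\kappa/2)$ for all $n\le\infty$, because the Gaussian isoperimetric inequality forces $\diam(f_*\gamma^n;1-\kappa)\le2D^{-1}(1-\kappa/2)$ for every $1$-Lipschitz $f$, while the coordinate $x\mapsto x_1$ has push-forward $\gamma^1$ with $\diam(\gamma^1;1-\kappa)=2D^{-1}(1-\kappa/2)$. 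Since $\sqrt{N^F}/\sqrt{N^F-1}\to1$, this yields (1). Estimates (2) and (4) then follow at once: the $U^F(n_N)$-quotient map $V_{N,n_N}^F\to G_{N,n_N}^F$ and the $U^F(1)$-Hopf quotient map $V_{N,n_N}^F\to\PV_{N,n_N}^F$ are $1$-Lipschitz for the quotient metrics and push the Haar measure to the Haar measure, and a $1$-Lipschitz measure-preserving surjection $\pi\colon X\to Y$ always gives $\ObsDiam(Y;-\kappa)\le\ObsDiam(X;-\kappa)$ (precompose $1$-Lipschitz functions on $Y$ with $\pi$); combining with (1) bounds the left-hand sides of (2) and (4) by $2D^{-1}(1-\kappa/2)$.

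For the lower bounds (3) and (5) I exhibit, on the quotient manifold, one $1$-Lipschitz function whose push-forward is suitably spread out. On $G_{N,n}^F$ the Euclidean norm of the \emph{first row} of a Stiefel representative is well defined (right multiplication by $U^F(n)$ acts on rows by an isometry of $F^n$) and $1$-Lipschitz for the quotient metric; under the Haar measure, $\sqrt{N^F}$ times the first row converges in law to the standard Gaussian on $F^n$, so its norm converges to the $\chi$-distribution with $n^F$ degrees of freedom, and the two lemmas above give $\liminf_N\sqrt{N^F}\ObsDiam(G_{N,n}^F;-\kappa)\ge\diam(\chi_{n^F}\text{-law};1-\kappa)\ge e^{1/2}(1-\kappa)$ as soon as $n^F\ge2$. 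The single exceptional case $n^F=1$ is $F=\R$, $n=1$, i.e.\ $G_{N,1}^\R=\R P^{N-1}$; there one uses the $\{\pm1\}$-invariant function $v\mapsto\|(v_1,v_2)\|$ instead, whose rescaled push-forward converges to $\chi_2$. For (5), forgetting all but the first column is a $1$-Lipschitz measure-preserving surjection $\PV_{N,m_N}^F\to\PV_{N,1}^F=FP^{N-1}$, so by the quotient principle it suffices to treat $m_N\equiv1$; on $FP^{N-1}$ the $U^F(1)$-invariant function $v\mapsto|v_1|$ has rescaled push-forward converging to the $\chi$-distribution with $\dim_\R F$ degrees of freedom, which works for $F=\C,\H$ (i.e.\ $\chi_2$ and $\chi_4$), while for $F=\R$ one again uses $v\mapsto\|(v_1,v_2)\|$ and $\chi_2$; this gives $\liminf_N\sqrt{N^F}\ObsDiam(\PV_{N,m_N}^F;-\kappa)\ge e^{1/2}(1-\kappa)$.

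The main obstacle is the choice of test functions in (3) and (5): the coordinate projections that are extremal on spheres and Gaussian spaces are not invariant under the defining group actions and do not descend to the quotients, and the obvious invariant substitute $v\mapsto|v_1|$ is too concentrated when $F=\R$ (the half-normal density has maximum $\sqrt{2/\pi}>e^{-1/2}$), which forces the two-dimensional norm $\|(v_1,v_2)\|$ and pins down the sharp constant $e^{1/2}$ as the reciprocal of the peak of the Rayleigh density. The remaining ingredients---the continuity of the observable diameter under weak convergence, the inequality $\|g_k\|_\infty\le e^{-1/2}$ for $k\ge2$, the weak lower semicontinuity of the partial diameter, and the Gaussian limit of the rescaled first row (or first coordinate) of a Haar Stiefel matrix---are standard and, in the last case, already contained in the proof of Theorem~\ref{thm:Stiefel}.
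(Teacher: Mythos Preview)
Your proof is correct. For (1), (2), and (4) your argument coincides with the paper's, modulo phrasing: what you call ``continuity of the observable diameter under weak convergence'' is the limit formula of Ozawa--Shioya (Theorem~\ref{thm:lim}), which yields a genuine limit here because $\kappa\mapsto 2D^{-1}(1-\kappa/2)$ is continuous; you should cite that result rather than assert continuity outright.

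For (3) and (5) you take a genuinely different route. The paper first invokes Theorem~\ref{thm:Gr-pS} to pass to the limit pyramid $\cP_{\Gamma^{\infty n}/U^F(n)}$ (resp.\ $\cP_{U^F(1)\backslash\Gamma^\infty}$), and then evaluates the test function $\bar z\mapsto\|z\|$ on the limit space $M^F_{1,n}/U^F(n)$ (resp.\ $U^F(1)\backslash F$), obtaining the $\chi_{n^F}$ (resp.\ $\chi_{1^F}$) distribution and the density bound $g_m\le g_2(1)=e^{-1/2}$. You instead work directly on the finite-dimensional quotient: you choose the invariant $1$-Lipschitz function on $G_{N,n}^F$ (norm of the first row) or on $\PV_{N,1}^F$ (modulus of the first coordinate, or $\|(v_1,v_2)\|$), use Watson's generalization of Maxwell--Boltzmann (Proposition~\ref{thm:MB}) to get weak convergence of the push-forward to a $\chi$-law, and then apply your weak lower semicontinuity lemma for the partial diameter. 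This avoids Theorem~\ref{thm:Gr-pS} and the pyramid limit formula entirely, making (3) and (5) logically independent of the harder convergence theorems. It also handles the edge case $n^F=1$ (i.e.\ $F=\R$, $n=1$ in (3), and $F=\R$ in (5)) which the paper's proof as written does not cover: there the bound $g_m\le e^{-1/2}$ fails since $g_1(0)=\sqrt{2/\pi}>e^{-1/2}$, and your substitution of $v\mapsto\|(v_1,v_2)\|$, with push-forward $\chi_2$, is exactly the needed fix. The paper's approach, in return, is more structural and exhibits the lower bound as a property of the limit object rather than of the approximating sequence.
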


Some upper bounds of the observable diameter of Stiefel and
Grassmann manifolds were essentially obtained by Milman \cite{Mil:asymp,Mil:inf-dim}
and Milman-Schechtman \cite{MS} formerly.
Corollary \ref{cor:ObsDiam}(1) gives an asymptotically optimal estimate.
(2) and (4) are direct consequences of (1).
(3) is a nontrivial result.
As far as the authors know, any lower estimate of the observable diameter
of the Grassmann manifold was not known before.

The $(N,n)$-Stiefel manifold over $F$ is naturally embedded
into $M_{N,n}^F$.
We point out that just to compare the distance between
the Haar probability measure on the $(N,n)$-Stiefel manifold
and the Gaussian measure on $M_{N,n}^F$ is not enough to obtain
Theorem \ref{thm:Stiefel}.  In fact we have the following

\begin{thm}\label{prok}
The Prohorov distance between the Haar probability measure
on the $(N,n_N)$-Stiefel manifold over $F$ with distance multiplied by $\sqrt{N^F-1}$
and the Gaussian measure $\gamma^{N^F n_N}$ on $M_{N,n_N}^F$ is bounded away
from zero for all $N = 1,2,\dots$.
\end{thm}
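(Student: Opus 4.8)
The plan is to exploit the fact that the $(N,n_N)$-Stiefel manifold over $F$, once its distance is rescaled by $\sqrt{N^F-1}$, sits inside $M^F_{N,n_N}$ as the sphere-like submanifold of matrices with orthonormal columns, \emph{blown up} by a factor roughly $\sqrt{N^F}$. After this rescaling the Frobenius norm of every point of the Stiefel manifold is exactly $\sqrt{n_N(N^F-1)}$, whereas a $\gamma^{N^F n_N}$-random matrix $Z$ has squared Frobenius norm $\|Z\|^2$ concentrated around $n_N N^F$ with fluctuations of order $\sqrt{n_N N^F}$. Thus the pushed-forward Haar measure is supported on a single sphere of radius $\sqrt{n_N(N^F-1)}$ in $M^F_{N,n_N}\cong\R^{N^F n_N}$, while a definite fraction of the Gaussian mass lies outside any fixed neighbourhood of that sphere. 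Concretely, I would fix a small $\varepsilon>0$ and show that the $\varepsilon\sqrt{n_N N^F}$-neighbourhood of the radius-$\sqrt{n_N(N^F-1)}$ sphere carries $\gamma^{N^F n_N}$-measure bounded away from $1$; since the Haar measure gives this neighbourhood mass $1$, the definition of the Prohorov distance $\dP$ immediately yields $\dP \ge \min(\varepsilon\sqrt{n_N N^F},\delta)$ for a constant $\delta>0$ — and here is the subtlety: this lower bound actually \emph{diverges}, which is even stronger than ``bounded away from zero''.

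The key steps, in order, are: (i) record that after rescaling, the support of the pushed-forward Haar measure $\mu_N$ is contained in the sphere $S_N:=\{W\in M^F_{N,n_N}:\|W\|=\sqrt{n_N(N^F-1)}\}$; (ii) estimate, for the Gaussian measure $\gamma:=\gamma^{N^F n_N}$, the mass of the annulus $A_{N,t}:=\{Z:\bigl|\,\|Z\|-\sqrt{n_N(N^F-1)}\,\bigr|\le t\}$ — using that $\|Z\|^2$ is a chi-squared variable with $N^F n_N$ degrees of freedom, its mean is $N^F n_N$ and its standard deviation is of order $\sqrt{N^F n_N}$, so $\|Z\|$ concentrates around $\sqrt{N^F n_N}$ with Gaussian-type fluctuations of order $1$; crucially the \emph{center} of $A_{N,t}$ is $\sqrt{n_N(N^F-1)}=\sqrt{N^F n_N}\sqrt{1-1/N^F}\approx\sqrt{N^F n_N}-\tfrac{1}{2}\sqrt{n_N/N^F}$, which differs from $\sqrt{N^F n_N}$ by a term that stays bounded (it is at most $\tfrac12$), so the annulus $A_{N,t}$ with $t$ a fixed small constant still misses at least, say, half the Gaussian mass; (iii) conclude: since $\mu_N(A_{N,t})=1$ for every $t\ge 0$ while $\gamma(A_{N,t})\le 1-\delta_0$ for suitable fixed $t>0,\ \delta_0>0$, the definition of Prohorov distance forces $\dP(\mu_N,\gamma)\ge\min(t,\delta_0)>0$ uniformly in $N$.

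For the clean divergence claim one would instead take $t=t_N\to\infty$ slowly (e.g.\ $t_N=\sqrt{\log N}$); then still $\gamma(A_{N,t_N})\to 0$ by the chi-squared tail bound, hence $\dP(\mu_N,\gamma)\ge\min(t_N,1)\to\infty$, which of course implies the boundedness-away-from-zero statement of Theorem \ref{prok}. I would present the constant-$t$ version as the main line of proof since it is the cleanest, and remark that in fact $\dP\to\infty$.

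The main obstacle is step (ii): one must quantify that the \emph{typical radius} $\sqrt{N^F n_N}$ of a Gaussian matrix and the \emph{exact radius} $\sqrt{n_N(N^F-1)}$ of the rescaled Stiefel manifold are close enough that a fixed annulus around the Stiefel radius still captures only a bounded fraction of Gaussian mass — this is true because their difference is $O(1)$ uniformly in $N$ and $n_N$ (indeed $\sqrt{N^F n_N}-\sqrt{n_N(N^F-1)}=\sqrt{n_N}(\sqrt{N^F}-\sqrt{N^F-1})\le\sqrt{n_N}/(2\sqrt{N^F-1})\le\tfrac12$ using $n_N\le N$) — but it requires care because both radii grow, so one is comparing two diverging quantities and must control the difference, not the ratio. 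The needed concentration of $\|Z\|$ is standard (a one-dimensional Gaussian concentration inequality, or a direct chi-squared tail estimate à la Laurent–Massart), so no new ideas are required there; the whole argument is elementary once the right normalization bookkeeping is done, which is precisely the point the theorem is making — the obstruction to Theorem \ref{thm:Stiefel} being ``just'' a measure-comparison statement is purely this radius mismatch, and the weak-convergence proof must instead move the Stiefel measure by a transport map rather than compare it in place.
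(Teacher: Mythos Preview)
Your constant-$t$ argument (steps (i)--(iii)) is correct and proves the theorem. The cleanest justification of (ii) is the uniform bound on the radial density: writing $m = N^F n_N$, the density $g_m$ of $\|Z\|$ under $\gamma^m$ satisfies $g_m(r) \le g_m(\sqrt{m-1}) \le g_2(1) = e^{-1/2}$ for every $m \ge 2$, so any interval of length $2t$ has $\gamma^m$-mass at most $2t\,e^{-1/2}$. Taking $A = X^F_{N,n_N}$ in the definition of $\dP$ and any fixed $t$ with $2t\,e^{-1/2} < 1 - t$ then gives the uniform lower bound $\dP \ge t$. Note that your discussion of ``the main obstacle'' --- comparing the two radii $\sqrt{N^F n_N}$ and $\sqrt{n_N(N^F-1)}$ --- is a red herring: the density bound controls the mass of \emph{every} interval of length $2t$ regardless of its center, so no such comparison is needed at all.

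The divergence claims, however, are wrong and should be deleted. First, the Prohorov distance between two probability measures never exceeds $1$, so it cannot diverge (and $\min(t_N,1)\le 1$ in any case). Second, if $t_N\to\infty$ then $A_{N,t_N}$ eventually swallows the bulk of the Gaussian mass --- precisely because, as you yourself observe, $\|Z\|$ concentrates at scale $O(1)$ around $\sqrt{m}$, which lies within $1/2$ of the annulus center --- so $\gamma(A_{N,t_N})\to 1$, not $0$. The same mistake afflicts the $\varepsilon\sqrt{n_N N^F}$-scaled neighbourhood in your opening paragraph: that annulus has diverging width and hence $\gamma$-mass tending to $1$, not bounded away from $1$.

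The paper takes a different route. Rather than the total Frobenius norm, it uses the column-wise product structure: each column of a point of $X^F_{N,n_N}$ lies on the sphere $X^F_{N,1}$, so $B_D(X^F_{N,n_N}) \subset (B_D(X^F_{N,1}))^{n_N}$, and the same radial-density bound in $\R^{N^F}$ yields roughly $\gamma^{N^F}(B_D(X^F_{N,1})) \le 2D/\sqrt{\pi}$ up to a factor tending to $1$. This produces an $n_N$-th power in the resulting inequality, asymptotically $1 - D \le (2D/\sqrt{\pi})^{n_N} e^{1/4}$, which in particular forces $\liminf_N \dP \ge \sqrt{\pi}/2$ whenever $n_N\to\infty$. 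Your single-sphere argument gives only a fixed positive lower bound independent of $n_N$; that is all the theorem asserts, but the paper's bound carries more information.
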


Theorems \ref{thm:Stiefel} and \ref{prok} tell us that
the weak convergence of mm-spaces is different from the weak convergence of measures.

The idea of the proof of Theorem \ref{thm:Stiefel} is as follows.
Denote by $X^F_{N,n}$ the $(N,n)$-Stiefel manifold over $F$
with distance multiplied by $\sqrt{N^F - 1}$.
It suffices to prove that
\begin{align*}
\lim_{N\to\infty} X^F_{N,n_N} &\prec \Gamma^\infty, \tag{i}\\
\lim_{N\to\infty} X^F_{N,n_N} &\succ \Gamma^\infty, \tag{ii}
\end{align*}
where $\prec$ is the Lipschitz order relation (see Definition \ref{defn:dom}).
Note that the Lipschitz order relation naturally extends to the relation
on the compactification of the space of mm-spaces.

(ii) follows from an easy discussion.
$S^n(r)$ denotes an $n$-dimensional sphere of radius $r$ in a Euclidean space.
We have
\[
X^F_{N,n_N} \succ S^{N^F}(\sqrt{N^F -1})
\]
and the Maxwell-Boltzmann distribution law implies
\[
\lim_{N\to\infty} S^{N^F}(\sqrt{N^F -1}) \succ \Gamma^k
\]
for any $k$.
Combining these leads to (ii).

For the proof of (i), we find a suitable neighborhood of $X^F_{N,n_N}$ in $M_{N,n_N}^F$
which has most of the total measure of $\gamma^{N^F n_N}$,
so that the neighborhood approximates
$(M_{N,n_N}^F,\|\cdot\|,\gamma^{N^F n_N})$.
We estimate the Lipschitz constant of the nearest point projection from
the neighborhood to $X^F_{N,n_N}$
by using the polar decomposition of a matrix in the neighborhood,
where the smallest Lipschitz constant is eventually close to one.
We need delicate estimates of the measure of the neighborhood
and the Lipschitz constant to justify the proof of (i),
in which we find out that the condition ($*$)
guarantees to our estimates.
Note that we do not know the weak limit without ($*$).

Theorem \ref{thm:Gr-pS} is proved
by using Theorem \ref{thm:Stiefel}.
To prove it,
we need maps between $X^F_{N,n_N}$ and a finite-dimensional approximation of $\Gamma^\infty$ that are
equivariant with respect to the $U^F(n)$ and $U^F(1)$-Hopf actions.
We need the generalization of the Maxwell-Boltzmann distribution law
due to Watson \cite{Watson} to obtain such maps
for the case of Grassmann manifolds.

Our main theorems could be related with the infinite-dimensional analysis, such as
the theory of abstract Wiener spaces.  In fact, the infinite-dimensional Gaussian space
$\Gamma^\infty$
is an abstract Wiener space with $l^2$ as its Cameron-Martin space.
However, $\Gamma^\infty$ admits no separable Banach norm fitting to $\gamma^\infty$
and is not mm-isomorphic to any abstract Wiener space with separable Banach norm.
By this reason, many useful theorems in the theory of abstract Wiener spaces
cannot be applied to $\Gamma^\infty$.
We conjecture that the weak limit of compact homogeneous Riemannian manifolds
is not mm-isomorphic to any abstract Wiener space with separable Banach norm.

\section{Preliminaries}
\subsection{Metric measure geometry}

In this subsection, we give the definitions and the facts
stated in \cite{Gro:green}*{\S 3$\frac12$} and \cite{Sy:mmg}.
The reader is expected to be familiar with basic measure theory
and metric geometry (cf.~\cite{Kechris, Bil, Bog, BBI}).

\subsubsection{mm-Isomorphism and Lipschitz order}

\begin{defn}[mm-Space]
  Let $(X,d_X)$ be a complete separable metric space
  and $\mu_X$ a Borel probability measure on $X$.
  We call the triple $(X,d_X,\mu_X)$ an \emph{mm-space}.
  We sometimes say that $X$ is an mm-space, in which case
  the metric and the measure of $X$ are respectively indicated by
  $d_X$ and $\mu_X$.
\end{defn}

\begin{defn}[mm-Isomorphism]
  Two mm-spaces $X$ and $Y$ are said to be \emph{mm-isomorphic}
  to each other if there exists an isometry $f : \supp\mu_X \to \supp\mu_Y$
  such that $f_\#\mu_X = \mu_Y$,
  where $f_\#\mu_X$ is the push-forward of $\mu_X$ by $f$
  and $\supp\mu_X$ the support of $\mu_X$.
  Such an isometry $f$ is called an \emph{mm-isomorphism}.
  Denote by $\cX$ the set of mm-isomorphism classes of mm-spaces.
\end{defn}

Any mm-isomorphism between mm-spaces is automatically surjective,
even if we do not assume it.
Note that $X$ is mm-isomorphic to $(\supp\mu_X,d_X,\mu_X)$.

\emph{We assume that an mm-space $X$ satisfies
\[
X = \supp\mu_X
\]
unless otherwise stated.}

\begin{defn}[Lipschitz order] \label{defn:dom}
  Let $X$ and $Y$ be two mm-spaces.
  We say that $X$ (\emph{Lipschitz}) \emph{dominates} $Y$
  and write $Y \prec X$ if
  there exists a $1$-Lipschitz map $f : X \to Y$ satisfying
  $f_\#\mu_X = \mu_Y$.
  We call the relation $\prec$ on $\cX$ the \emph{Lipschitz order}.
\end{defn}

\begin{prop} \label{prop:Liporder}
  The Lipschitz order $\prec$ is a partial order relation on $\cX$, i.e.,
  we have the following {\rm(1)}, {\rm(2)}, and {\rm(3)}
  for any mm-spaces $X$, $Y$, and $Z$.
  \begin{enumerate}
  \item $X \prec X$.
  \item If $X \prec Y$ and $Y \prec X$, then $X$ and $Y$ are
    mm-isomorphic to each other.
  \item If $X \prec Y$ and $Y \prec Z$, then $X \prec Z$.
  \end{enumerate}
\end{prop}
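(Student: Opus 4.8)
The plan is to verify the three properties in turn; (1) and (3) are immediate from the definition, and (2) carries all the weight.

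Property (1) is witnessed by $\mathrm{id}_X\colon X\to X$, which is $1$-Lipschitz and satisfies $(\mathrm{id}_X)_\#\mu_X=\mu_X$. For (3), suppose $f\colon Y\to X$ witnesses $X\prec Y$ (i.e.\ $f$ is $1$-Lipschitz with $f_\#\mu_Y=\mu_X$) and $g\colon Z\to Y$ witnesses $Y\prec Z$. Then $f\circ g\colon Z\to X$ is $1$-Lipschitz and $(f\circ g)_\#\mu_Z=f_\#(g_\#\mu_Z)=f_\#\mu_Y=\mu_X$, so $X\prec Z$.

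The essential point is (2). Suppose $f\colon Y\to X$ and $g\colon X\to Y$ are $1$-Lipschitz with $f_\#\mu_Y=\mu_X$ and $g_\#\mu_X=\mu_Y$. Then $f\circ g\colon X\to X$ and $g\circ f\colon Y\to Y$ are $1$-Lipschitz and measure-preserving. I would first establish the key lemma: \emph{any $1$-Lipschitz map $h$ of an mm-space $W$ (recall $W=\supp\mu_W$) into itself with $h_\#\mu_W=\mu_W$ is a surjective isometry.} Granting it, $f\circ g$ and $g\circ f$ are surjective isometries, so for all $x,x'\in X$,
\[
d_X(x,x')=d_X(f(g(x)),f(g(x')))\le d_Y(g(x),g(x'))\le d_X(x,x'),
\]
forcing $d_Y(g(x),g(x'))=d_X(x,x')$. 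Thus $g$ is a distance-preserving injection whose image $g(X)$ is complete, hence closed, and $\mu_Y(g(X))=(g_\#\mu_X)(g(X))=\mu_X(g^{-1}(g(X)))=1$, so $g(X)\supseteq\supp\mu_Y=Y$. Hence $g$ is a surjective isometry with $g_\#\mu_X=\mu_Y$, i.e.\ an mm-isomorphism, and $X$ and $Y$ are mm-isomorphic.

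It remains to prove the lemma, which is the main obstacle. The idea is to pass to the product system: $h\times h$ on $(W\times W,\ \mu_W\otimes\mu_W)$ is measure-preserving, the function $(p,q)\mapsto d_W(h(p),h(q))-d_W(p,q)$ is continuous and nonpositive, and $n\mapsto d_W(h^n(p),h^n(q))$ is nonincreasing. Applying the Poincar\'e recurrence theorem to $h\times h$ on the separable space $W\times W$, together with a countable basis of its topology and the fact that every nonempty open subset of $W\times W=\supp(\mu_W\otimes\mu_W)$ has positive measure, one obtains that for $(\mu_W\otimes\mu_W)$-a.e.\ $(p,q)$ there is a subsequence $n_k\to\infty$ with $h^{n_k}(p)\to p$ and $h^{n_k}(q)\to q$; then
\[
d_W(p,q)=\lim_{k\to\infty}d_W(h^{n_k}(p),h^{n_k}(q))\le d_W(h(p),h(q))\le d_W(p,q).
\]
Hence $\{(p,q):d_W(h(p),h(q))=d_W(p,q)\}$ is a closed set of full $\mu_W\otimes\mu_W$-measure, so it is all of $W\times W$; thus $h$ is distance-preserving, hence injective with complete (so closed) image of full $\mu_W$-measure, which forces $h(W)=\supp\mu_W=W$. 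I expect the recurrence bookkeeping — that a.e.\ point returns to each basic neighborhood infinitely often, so its orbit accumulates at it — to be routine, and I note that the argument uses only that $\mu_W$ is a Borel probability measure of full support on a complete separable metric space, with no compactness or bounded-diameter assumption.
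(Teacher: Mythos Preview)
The paper does not prove this proposition; it is listed among the preliminary facts imported from the references \cite{Gro:green}*{\S 3$\frac12$} and \cite{Sy:mmg}, so there is no proof in the paper to compare against.

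Your argument is correct and self-contained. Parts (1) and (3) are immediate, and your proof of (2) via the key lemma---that a $1$-Lipschitz measure-preserving self-map $h$ of an mm-space $W$ (with $W=\supp\mu_W$) is a surjective isometry---is valid. The Poincar\'e recurrence argument on $W\times W$ goes through because $W\times W$ is second countable with $\supp(\mu_W\otimes\mu_W)=W\times W$, and the recurrence theorem applies to non-invertible measure-preserving transformations; the monotonicity of $n\mapsto d_W(h^n(p),h^n(q))$ then forces $d_W(h(p),h(q))=d_W(p,q)$ on a full-measure, hence (by closedness and full support) on all of $W\times W$. The passage from ``$h$ is an isometric embedding with closed image of full measure'' to ``$h$ is onto'' is clean, as is the final deduction that $g$ is an mm-isomorphism.
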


\subsubsection{Observable diameter}

The observable diameter is one of the most fundamental invariants
of an mm-space.

\begin{defn}[Partial and observable diameter] \label{defn:ObsDiam}
  Let $X$ be an mm-space and let $\kappa > 0$.
  We define
  the \emph{partial diameter
  $\diam(X;1-\kappa) = \diam(\mu_X;1-\kappa)$ of $X$}
  to be the infimum of $\diam A$,
  where $A \subset X$ runs over all Borel subsets
  with $\mu_X(A) \ge 1-\kappa$ and $\diam A$ denotes the diameter of $A$.
  Denote by $\Lip_1(X)$ the set of $1$-Lipschitz continuous
  real-valued functions on $X$.
  We define
  the \emph{observable diameter of $X$} to be
  \[
  \ObsDiam(X;-\kappa) := \sup_{f \in \Lip_1(X)} \diam(f_\#\mu_X;1-\kappa).
  \]
\end{defn}


\begin{prop} \label{prop:ObsDiam-dom}
  If $X \prec Y$ for two mm-spaces $X$ and $Y$, then
  \[
  \ObsDiam(X;-\kappa) \le \ObsDiam(Y;-\kappa)
  \]
  for any $\kappa > 0$.
\end{prop}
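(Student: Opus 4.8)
The plan is to unwind the definitions and show that every $1$-Lipschitz function on $X$ can be ``pulled back'' through the dominating map to a $1$-Lipschitz function on $Y$ with the same pushed-forward distribution, so that the supremum defining $\ObsDiam(X;-\kappa)$ is over a subset (in distribution) of the functions contributing to $\ObsDiam(Y;-\kappa)$. Concretely, since $X \prec Y$, fix a $1$-Lipschitz map $f : Y \to X$ with $f_\#\mu_Y = \mu_X$. Given any $g \in \Lip_1(X)$, the composition $g \circ f : Y \to \R$ is $1$-Lipschitz as a composition of $1$-Lipschitz maps, hence $g\circ f \in \Lip_1(Y)$.

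The key identity is that the distribution of $g$ under $\mu_X$ equals the distribution of $g \circ f$ under $\mu_Y$: indeed $(g\circ f)_\#\mu_Y = g_\#(f_\#\mu_Y) = g_\#\mu_X$. Therefore $\diam(g_\#\mu_X;1-\kappa) = \diam((g\circ f)_\#\mu_Y;1-\kappa)$ for every $\kappa > 0$, because the partial diameter depends only on the pushed-forward measure on $\R$. Taking the supremum over $g \in \Lip_1(X)$ on the left and noting that each corresponding $g \circ f$ lies in $\Lip_1(Y)$, we get
\[
\ObsDiam(X;-\kappa) = \sup_{g \in \Lip_1(X)} \diam((g\circ f)_\#\mu_Y;1-\kappa) \le \sup_{h \in \Lip_1(Y)} \diam(h_\#\mu_Y;1-\kappa) = \ObsDiam(Y;-\kappa),
\]
which is the claim.

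There is essentially no obstacle here; the only point requiring a moment's care is checking that the partial diameter $\diam(\,\cdot\,;1-\kappa)$ of a Borel probability measure on $\R$ is genuinely a function of that measure alone (so that two identically distributed functions yield the same value), which is immediate from its definition as an infimum over Borel subsets of prescribed measure. One should also note that the argument works verbatim if one uses the alternative convention in which the Lipschitz order is witnessed by a map in the stated direction; no separability or support hypotheses beyond those already standing in the paper are needed.
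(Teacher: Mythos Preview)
Your proof is correct and is exactly the standard argument: pull back each $g \in \Lip_1(X)$ along the $1$-Lipschitz map $f:Y\to X$ witnessing $X\prec Y$, use $(g\circ f)_\#\mu_Y = g_\#\mu_X$, and bound the supremum. The paper does not actually supply a proof of this proposition; it is stated in the preliminaries as a known fact from the cited references, so there is nothing to compare against beyond noting that your argument is the canonical one.
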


\subsubsection{Distance between measures}

\begin{defn}[Total variation distance]
  The \emph{total variation distance $\dTV(\mu,\nu)$} of
  two Borel probability measures $\mu$ and $\nu$ on a topological space $X$
  is defined by
  \[
  \dTV(\mu,\nu) := \sup_A |\,\mu(A) - \nu(A)\,|,
  \]
  where $A$ runs over all Borel subsets of $X$.
\end{defn}

If $\mu$ and $\nu$ are both absolutely continuous with respect to
a Borel measure $\omega$ on $X$, then
\[
\dTV(\mu,\nu) = \frac{1}{2} \int_X \left| \frac{d\mu}{d\omega} - \frac{d\nu}{d\omega} \right| \; d\omega,
\]
where $\frac{d\mu}{d\omega}$ is the Radon-Nikodym derivative of $\mu$ with respect to $\omega$.

\begin{defn}[Prohorov distance]
  The \emph{Prohorov distance} $\dP(\mu,\nu)$ between two Borel probability
    measures $\mu$ and $\nu$ on a metric space $X$
  is defined to be the infimum of $\varepsilon > 0$ satisfying
  \[
  \mu(B_\varepsilon(A)) \ge \nu(A) - \varepsilon
  \]
  for any Borel subset $A \subset X$, where
  \[
  B_\varepsilon(A) := \{\; x \in X \mid d_X(x,A) < \varepsilon\;\}.
  \]
\end{defn}

The Prohorov metric is a metrization of weak convergence of
Borel probability measures on $X$ provided that $X$ is a separable
metric space.

\begin{prop} \label{dpdtv}
  For any two Borel probability
  measures $\mu$ and $\nu$ on a metric space $X$, we have
  $\dP(\mu,\nu) \le \dTV(\mu,\nu)$.
\end{prop}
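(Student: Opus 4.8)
The plan is to show that the number $\varepsilon := \dTV(\mu,\nu)$ (or, in the degenerate case, every positive number) already satisfies the defining inequality for the Prohorov distance. First I would record the elementary observation that for \emph{every} Borel set $A \subset X$ and every $\varepsilon > 0$ one has $A \subset B_\varepsilon(A)$, since $d_X(x,A) = 0 < \varepsilon$ for each $x \in A$; consequently $\mu(B_\varepsilon(A)) \ge \mu(A)$. Next, straight from the definition of the total variation distance, $\mu(A) - \nu(A) \ge -\dTV(\mu,\nu)$, i.e.\ $\mu(A) \ge \nu(A) - \dTV(\mu,\nu)$.

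Now suppose $\dTV(\mu,\nu) > 0$ and put $\varepsilon := \dTV(\mu,\nu)$. Chaining the two inequalities above gives $\mu(B_\varepsilon(A)) \ge \nu(A) - \varepsilon$ for every Borel $A \subset X$, which is exactly the condition in the definition of $\dP$; hence $\dP(\mu,\nu) \le \varepsilon = \dTV(\mu,\nu)$. If instead $\dTV(\mu,\nu) = 0$, then $\mu(A) \ge \nu(A)$ for all $A$ (so in fact $\mu = \nu$), and for an arbitrary $\varepsilon > 0$ the same chain yields $\mu(B_\varepsilon(A)) \ge \mu(A) \ge \nu(A) \ge \nu(A) - \varepsilon$; taking the infimum over $\varepsilon > 0$ gives $\dP(\mu,\nu) = 0$, which is again $\le \dTV(\mu,\nu)$.

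There is essentially no substantive obstacle here: the argument is a two-line set inclusion plus the definition of $\dTV$. The only mild point of care is the case $\dTV(\mu,\nu) = 0$, handled separately above because the infimum defining $\dP$ ranges over strictly positive $\varepsilon$; one should also note that the argument uses only $A \subset B_\varepsilon(A)$ and makes no use of separability or completeness of $X$, so it holds for arbitrary metric spaces as stated.
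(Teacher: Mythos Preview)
Your argument is correct. Note, however, that the paper does not actually supply a proof of this proposition: it appears in the preliminaries (Section~2) as a stated fact without proof, alongside other standard results about the Prohorov and total variation distances. Your two-line argument---$A \subset B_\varepsilon(A)$ together with $\mu(A) \ge \nu(A) - \dTV(\mu,\nu)$---is the usual elementary proof, and nothing more is needed.
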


\begin{defn}[Ky Fan distance]
  Let $(X,\mu)$ be a measure space and $Y$ a metric space.
  For two $\mu$-measurable maps $f,g : X \to Y$, we define 
  the \emph{Ky Fan distance $\dKF(f,g)$ between $f$ and $g$}
  to be the infimum of $\varepsilon \ge 0$ satisfying
  \[
  \mu(\{\;x \in X \mid d_Y(f(x),g(x)) > \varepsilon\;\}) \le \varepsilon.
  \]
\end{defn}

$\dKF$ is a pseudo-metric on the set of $\mu$-measurable maps from $X$ to $Y$.
It follows that $\dKF(f,g) = 0$ if and only if $f = g$ $\mu$-a.e.


\subsubsection{Box distance and observable distance}

\begin{defn}[Parameter]
  Let $I := [\,0,1\,)$ and let $X$ be an mm-space.
  A map $\varphi : I \to X$ is called a \emph{parameter of $X$}
  if $\varphi$ is a Borel measurable map such that
  $\varphi_\#\cL^1 = \mu_X$,
  where $\cL^1$ denotes the one-dimensional Lebesgue measure on $I$.
\end{defn}

It is known that any mm-space has a parameter.

\begin{defn}[Box distance]
  We define the \emph{box distance $\square(X,Y)$ between
    two mm-spaces $X$ and $Y$} to be
  the infimum of $\varepsilon \ge 0$
  satisfying that there exist parameters
  $\varphi : I \to X$, $\psi : I \to Y$, and
  a Borel subset $I_0 \subset I$ such that
  \[
    \cL^1(I_0) \ge 1-\varepsilon \quad\text{and}\quad
    |\,\varphi^*d_X(s,t)-\psi^*d_Y(s,t)\,| \le \varepsilon
  \]
  for any $s,t \in I_0$,
  where
  $\varphi^*d_X(s,t) := d_X(\varphi(s),\varphi(t))$ for $s,t \in I$.
\end{defn}

The box metric $\square$ is a complete separable metric on $\cX$.

\begin{prop} \label{prop:box-dP}
  Let $X$ be a complete separable metric space.
  For any two Borel probability measures $\mu$ and $\nu$ on $X$,
  we have
  \[
  \square((X,\mu),(X,\nu)) \le 2 \dP(\mu,\nu).
  \]
\end{prop}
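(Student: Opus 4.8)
The plan is to build explicit parameters of $(X,\mu)$ and $(X,\nu)$ that agree on a large subset of $I$, directly from an optimal (or near-optimal) coupling realizing the Prohorov distance. Fix $\varepsilon > \dP(\mu,\nu)$. By a standard coupling characterization of the Prohorov distance (Strassen's theorem), there exists a Borel probability measure $\pi$ on $X \times X$ with marginals $\mu$ and $\nu$ such that $\pi(\{(x,y) : d_X(x,y) \le \varepsilon\}) \ge 1-\varepsilon$. First I would take a parameter $\Phi = (\varphi,\psi) : I \to X \times X$ of the mm-space $(X\times X, \pi)$ (using that every mm-space has a parameter, applied to the product with, say, the $\ell^1$-sum metric). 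Then $\varphi := \mathrm{pr}_1 \circ \Phi$ is a parameter of $(X,\mu)$ and $\psi := \mathrm{pr}_2 \circ \Phi$ is a parameter of $(X,\nu)$, because pushing forward $\cL^1$ by $\Phi$ gives $\pi$, whose marginals are $\mu$ and $\nu$.

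Next I would set $I_0 := \Phi^{-1}(\{(x,y) : d_X(x,y) \le \varepsilon\})$, so that $\cL^1(I_0) = \pi(\{d_X \le \varepsilon\}) \ge 1-\varepsilon$. For $s,t \in I_0$ the triangle inequality gives
\[
|\,\varphi^*d_X(s,t) - \psi^*d_X(s,t)\,|
= |\,d_X(\varphi(s),\varphi(t)) - d_X(\psi(s),\psi(t))\,|
\le d_X(\varphi(s),\psi(s)) + d_X(\varphi(t),\psi(t))
\le 2\varepsilon.
\]
Hence the pair of parameters $(\varphi,\psi)$ together with $I_0$ witnesses $\square((X,\mu),(X,\nu)) \le 2\varepsilon$. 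Letting $\varepsilon \downarrow \dP(\mu,\nu)$ yields the claimed bound.

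The only genuine subtlety is invoking the coupling (Strassen) characterization of the Prohorov distance in the stated generality — it requires $X$ to be separable, which is part of the hypothesis — and checking the measurability bookkeeping: that $\{(x,y) : d_X(x,y) \le \varepsilon\}$ is Borel in $X \times X$ (clear, since $d_X$ is continuous) and that a parameter of the product mm-space exists and has the right marginals. If one prefers to avoid Strassen's theorem, an alternative is to work directly with near-optimal parameters $\varphi_0$ of $(X,\mu)$ and $\psi_0$ of $(X,\nu)$ and re-parametrize one of them on a measurable partition of $I$ subordinate to the Prohorov condition; this is more hands-on but essentially reconstructs the coupling. In either route the estimate producing the factor $2$ is exactly the triangle-inequality step above, so the factor is not an artifact but reflects the two endpoints $s$ and $t$ each contributing an error of size $\varepsilon$.
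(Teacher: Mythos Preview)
Your argument is correct. The paper itself does not supply a proof of this proposition; it is listed among the preliminary facts imported from \cite{Gro:green} and \cite{Sy:mmg}. The route you take---Strassen's coupling characterization of $\dP$, a parameter of the coupling $\pi$ on $X\times X$, and the triangle-inequality estimate giving the factor $2$---is exactly the standard proof one finds in those references (see, e.g., \cite{Sy:mmg}). The measurability checks you flag are routine, and your observation that the single threshold $\varepsilon' = 2\varepsilon$ simultaneously covers both the measure condition $\cL^1(I_0)\ge 1-\varepsilon \ge 1-2\varepsilon$ and the distortion bound $\le 2\varepsilon$ is the right way to close the argument.
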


\begin{defn}[Observable distance] \label{defn:dconc}
  For any parameter $\varphi$ of $X$, we set
  \[
  \varphi^*\Lip_1(X)
  := \{\;f\circ\varphi \mid f \in \Lip_1(X)\;\}.
  \]
  We define the \emph{observable distance $\dconc(X,Y)$ between
    two mm-spaces $X$ and $Y$} by
  \[
  \dconc(X,Y) := \inf_{\varphi,\psi} \dH(\varphi^*\Lip_1(d_X),\psi^*\Lip_1(d_Y)),
  \]
  where $\varphi : I \to X$ and $\psi : I \to Y$ run over all parameters
  of $X$ and $Y$, respectively,
  and where $\dH$
  is the Hausdorff metric with respect to the Ky Fan metric $\dKF$
  for the one-dimensional Lebesgue measure on $I$.
  $\dconc$ is a metric on $\cX$.
  We say that a sequence $\{X_n\}_{n=1}^\infty$ of mm-spaces
  \emph{concentrates} or \emph{converges weakly} to an mm-space $X$ if $X_n$ $\dconc$-converges to $X$
  as $n\to\infty$.
\end{defn}


\begin{prop} \label{prop:dconc-box}
  For any two mm-spaces $X$ and $Y$ we have
  $\dconc(X,Y) \le \square(X,Y)$.
\end{prop}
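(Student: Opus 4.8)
The plan is to reduce everything to the definitions and then transport $1$-Lipschitz functions across a near-optimal box-distance matching of parameters. Fix $\varepsilon > \square(X,Y)$. By the definition of the box distance there are parameters $\varphi : I \to X$ and $\psi : I \to Y$ together with a Borel set $I_0 \subset I$ such that $\cL^1(I_0) \ge 1-\varepsilon$ and $|\,\varphi^*d_X(s,t) - \psi^*d_Y(s,t)\,| \le \varepsilon$ for all $s,t \in I_0$. Since $\dconc(X,Y)$ is the infimum of $\dH(\varphi^*\Lip_1(d_X),\psi^*\Lip_1(d_Y))$ over \emph{all} parameters, it suffices to show that this particular pair of parameters satisfies $\dH(\varphi^*\Lip_1(d_X),\psi^*\Lip_1(d_Y)) \le \varepsilon$ with respect to the Ky Fan metric $\dKF$ on $I$; letting $\varepsilon \downarrow \square(X,Y)$ then finishes the proof.

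For the core step, I would take an arbitrary $f \in \Lip_1(X)$ and produce a $g \in \Lip_1(Y)$ with $\dKF(f\circ\varphi,\, g\circ\psi) \le \varepsilon$. The candidate is the infimal-convolution (McShane-type) formula
\[
g(y) := \inf_{s \in I_0} \bigl(\, f(\varphi(s)) + d_Y(y,\psi(s)) \,\bigr), \qquad y \in Y.
\]
First one checks that this infimum is finite for each $y$: fixing $s_0 \in I_0$ and using that $f$ is $1$-Lipschitz on $X$ together with the two-sided comparison $d_X(\varphi(s),\varphi(s_0)) \le d_Y(\psi(s),\psi(s_0)) + \varepsilon$, the bracketed quantity is bounded below by $f(\varphi(s_0)) - d_Y(y,\psi(s_0)) - \varepsilon$. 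Hence $g$ is a well-defined $1$-Lipschitz function on $Y$, being an infimum of the $1$-Lipschitz functions $y \mapsto f(\varphi(s)) + d_Y(y,\psi(s))$; moreover $g\circ\psi$ is Borel measurable, since $g$ is continuous and $\psi$ is Borel. Next, evaluating at $y = \psi(t)$ with $t \in I_0$: the choice $s = t$ gives $g(\psi(t)) \le f(\varphi(t))$, while for every $s \in I_0$ we have $f(\varphi(s)) + d_Y(\psi(t),\psi(s)) \ge f(\varphi(s)) + d_X(\varphi(t),\varphi(s)) - \varepsilon \ge f(\varphi(t)) - \varepsilon$, so $g(\psi(t)) \ge f(\varphi(t)) - \varepsilon$. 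Thus $|f\circ\varphi - g\circ\psi| \le \varepsilon$ pointwise on $I_0$; since $\cL^1(I \setminus I_0) \le \varepsilon$, the set where $|f\circ\varphi - g\circ\psi| > \varepsilon$ has $\cL^1$-measure at most $\varepsilon$, which is precisely $\dKF(f\circ\varphi,\, g\circ\psi) \le \varepsilon$.

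Running the same construction with the roles of $(X,\varphi,f)$ and $(Y,\psi,g)$ interchanged shows every element of $\psi^*\Lip_1(d_Y)$ lies within $\dKF$-distance $\varepsilon$ of $\varphi^*\Lip_1(d_X)$ as well, so $\dH(\varphi^*\Lip_1(d_X),\psi^*\Lip_1(d_Y)) \le \varepsilon$ and hence $\dconc(X,Y) \le \varepsilon$. I expect the only genuinely delicate point to be the choice of $g$: one needs an honestly $1$-Lipschitz function on all of $Y$ — not merely on $\psi(I_0)$, and not merely Lipschitz up to an additive error — and the infimal-convolution formula is exactly what achieves this, absorbing the unavoidable $\varepsilon$-slack into the Ky Fan estimate rather than into the Lipschitz constant. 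Verifying finiteness of the infimum when $f$ is unbounded is the one place where the two-sided distance comparison on $I_0$ (rather than a one-sided one) is genuinely used.
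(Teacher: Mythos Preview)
The paper does not give a proof of this proposition; it is listed among the preliminary ``definitions and facts'' drawn from \cite{Gro:green}*{\S 3$\frac12$} and \cite{Sy:mmg}, so there is nothing to compare your argument against directly. Your proof is correct and is the standard one: match parameters near-optimally for the box distance, then use the McShane/infimal-convolution extension
\[
g(y) = \inf_{s \in I_0}\bigl(f(\varphi(s)) + d_Y(y,\psi(s))\bigr)
\]
to transport each $f \in \Lip_1(X)$ to a genuine $1$-Lipschitz function on $Y$, absorbing the $\varepsilon$-distortion into the Ky Fan bound rather than the Lipschitz constant. The finiteness check and the two-sided estimate on $I_0$ are handled correctly.

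One small technical point you glossed over: the definition of $g$ tacitly assumes $I_0 \neq \emptyset$. This is harmless, since taking $I_0 = \emptyset$ shows $\square(X,Y) \le 1$ always, and likewise $\dKF \le 1$ forces $\dconc(X,Y) \le 1$; so you may restrict to $\varepsilon < 1$ from the outset, guaranteeing $\cL^1(I_0) > 0$.
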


\subsubsection{Group action} \label{ssec:quotient}

Let $X$ be a metric space and $G$ a group acting on $X$ isometrically.
Let $\bar{X} = X/G$ be the quotient space of $X$ by the $G$-action.
Denote by $\bar{x}$ the class in $\bar{X}$ represented by a point $x \in X$.
We define a pseudo-metric $d_{\bar{X}}$ on the quotient space $\bar{X}$ by
\[
d_{\bar{X}}(\bar{x},\bar{y}) := \inf_{g,h \in G} d_X(g\cdot x,h \cdot y),
\qquad \bar{x},\bar{y} \in \bar{X}.
\]
$d_{\bar{X}}$ is a metric if every orbit of $G$ is closed in $X$.

Let $X$ and $Y$ be two metric spaces and $G$ a group
acting on $X$ and $Y$ isometrically.
For any $G$-equivariant map $f : X \to Y$
(i.e., $g\cdot f(x) = f(g\cdot x)$),
we have a unique map $\bar{f} : \bar{X} \to \bar{Y}$
with the property that $\bar{f}(\bar{x}) = \overline{f(x)}$
for any $x \in X$.  We call $\bar{f}$ the \emph{quotient map of $f$}.

\begin{lem}\label{lem:equivdomi}
  For any $L > 0$ and for any $G$-equivariant $L$-Lipschitz map $f : X \to Y$,
  the quotient map
  $\bar{f}$ is also $L$-Lipschitz.
\end{lem}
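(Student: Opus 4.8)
The plan is to unwind the definition of the quotient pseudo-metric and use equivariance to transport the group element from the target back to the source. First I would record the elementary simplification that, since $G$ acts isometrically on $X$, one has $d_X(g\cdot x, h\cdot y) = d_X(x, g^{-1}h\cdot y)$ for all $g,h\in G$, so that
\[
d_{\bar X}(\bar x,\bar y) = \inf_{g\in G} d_X(x, g\cdot y),
\]
and likewise for $\bar Y$. This collapses the double infimum in the definition of $d_{\bar X}$ to a single one and makes the subsequent comparison cleaner.

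Next, fix $x,y\in X$ and an arbitrary $g\in G$. Using the $G$-equivariance of $f$ we have $g\cdot f(y) = f(g\cdot y)$, and then the $L$-Lipschitz bound for $f$ gives
\[
d_Y(f(x), g\cdot f(y)) = d_Y(f(x), f(g\cdot y)) \le L\, d_X(x, g\cdot y).
\]
Since $\bar f(\bar x) = \overline{f(x)}$ and $\bar f(\bar y) = \overline{f(y)}$, and $g\cdot f(y)$ is a representative of $\overline{f(y)}$, the left-hand side is an upper bound for $d_{\bar Y}(\bar f(\bar x), \bar f(\bar y))$; hence $d_{\bar Y}(\bar f(\bar x), \bar f(\bar y)) \le L\, d_X(x, g\cdot y)$ for every $g\in G$. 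Taking the infimum over $g$ and invoking the simplification above yields
\[
d_{\bar Y}(\bar f(\bar x), \bar f(\bar y)) \le L\,\inf_{g\in G} d_X(x, g\cdot y) = L\, d_{\bar X}(\bar x,\bar y),
\]
which is the assertion.

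There is essentially no obstacle here: the only point requiring a moment's care is that the definition of $d_{\bar X}$ a priori involves two group elements, but isometry of the action reduces this to one, and equivariance then carries the remaining group element through $f$ without cost. The argument is valid verbatim whether $d_{\bar X}$ and $d_{\bar Y}$ are genuine metrics or only pseudo-metrics, so no closedness hypothesis on the orbits is needed; well-definedness of $\bar f$ has already been established preceding the statement.
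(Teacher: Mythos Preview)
Your proof is correct and follows essentially the same approach as the paper: both use equivariance to write $g\cdot f(y)=f(g\cdot y)$, apply the $L$-Lipschitz bound, and take the infimum over the group. The only cosmetic difference is that you first collapse the double infimum $\inf_{g,h}$ to a single $\inf_g$ using the isometry of the action, whereas the paper carries both group elements through the computation; this changes nothing of substance.
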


\begin{proof}
  For any $x, y \in X$, we see
\begin{align*}
  & d_{\bar{Y}}(\bar{f}(\bar{x}),\bar{f}(\bar{y})) = d_{\bar{Y}}(\overline{f(x)},\overline{f(y)})
  = \inf_{g,h \in G} d_Y(g\cdot f(x),h\cdot f(y)) \\
  &= \inf_{g,h \in G} d_Y(f(g\cdot x),f(h\cdot y)) \le L \inf_{g,h \in G} d_X(g\cdot x,h\cdot y)
  = L \, d_{\bar{X}}(\bar{x},\bar{y}).
\end{align*}
This completes the proof.
\end{proof}

For a Borel measure $\mu$ on $X$,
we denote by $\bar\mu$ the push-forward measure of $\mu$
by the natural projection $X \to \bar{X}$.

\begin{lem}[\cite{Sy:mmlim}*{Lemma 5.9}] \label{lem:dP-quotient}
  Let $X$ be a metric space and $G$ a group acting on $X$ isometrically.
  Then, for any two Borel probability measures $\mu$ and $\nu$ on $X$,
  we have
  $\dP(\bar{\mu},\bar{\nu}) \le \dP(\mu,\nu)$.
\end{lem}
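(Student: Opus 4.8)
The plan is to reduce the statement to the monotonicity of measures under the natural projection $\pi : X \to \bar X$, $\pi(x) = \bar x$, which carries $\mu$ and $\nu$ to $\bar\mu$ and $\bar\nu$. First I would record the only structural input needed: by the very definition of $d_{\bar X}$, taking $g = h = e$ gives $d_{\bar X}(\bar x,\bar y) \le d_X(x,y)$, so $\pi$ is $1$-Lipschitz; moreover, since $G$ acts by isometries, $d_{\bar X}$ is a genuine pseudo-metric. In particular $\pi$ is continuous, hence Borel, so $\bar\mu = \pi_\#\mu$ and $\bar\nu = \pi_\#\nu$ are Borel probability measures, and for every Borel $\bar A \subseteq \bar X$ the set $\pi^{-1}(\bar A)$ is Borel in $X$ with $\bar\mu(\bar A) = \mu(\pi^{-1}(\bar A))$ and $\bar\nu(\bar A) = \nu(\pi^{-1}(\bar A))$.

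Next, fix any $\varepsilon > \dP(\mu,\nu)$; it suffices to show $\dP(\bar\mu,\bar\nu) \le \varepsilon$, that is, $\bar\mu(B_\varepsilon(\bar A)) \ge \bar\nu(\bar A) - \varepsilon$ for every Borel $\bar A \subseteq \bar X$. Put $A := \pi^{-1}(\bar A)$, a Borel subset of $X$. The key geometric point is the inclusion $B_\varepsilon(A) \subseteq \pi^{-1}(B_\varepsilon(\bar A))$: if $x \in B_\varepsilon(A)$ then $d_X(x,a) < \varepsilon$ for some $a \in A$, and then $d_{\bar X}(\bar x,\bar a) \le d_X(x,a) < \varepsilon$ with $\bar a = \pi(a) \in \bar A$, so $\bar x \in B_\varepsilon(\bar A)$. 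I would also note here that $B_\varepsilon(\bar A)$ is open in $\bar X$ (the function $\bar x \mapsto d_{\bar X}(\bar x,\bar A)$ is $1$-Lipschitz, hence continuous, for a pseudo-metric just as for a metric), so it is Borel and $\bar\mu(B_\varepsilon(\bar A))$ is meaningful.

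Putting these together yields
\[
\bar\mu(B_\varepsilon(\bar A)) = \mu\bigl(\pi^{-1}(B_\varepsilon(\bar A))\bigr) \ge \mu(B_\varepsilon(A)) \ge \nu(A) - \varepsilon = \bar\nu(\bar A) - \varepsilon,
\]
where the middle inequality is the defining property of the Prohorov distance applied to the Borel set $A \subseteq X$ and to $\varepsilon > \dP(\mu,\nu)$, and the last equality uses $\nu(A) = \nu(\pi^{-1}(\bar A)) = \bar\nu(\bar A)$. Since $\varepsilon > \dP(\mu,\nu)$ was arbitrary, letting $\varepsilon \downarrow \dP(\mu,\nu)$ gives $\dP(\bar\mu,\bar\nu) \le \dP(\mu,\nu)$.

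There is essentially no analytic obstacle here: the content is the elementary neighborhood inclusion above, and the isometric action of $G$ enters only through the well-definedness of $d_{\bar X}$ as a pseudo-metric and the fact that $\pi$ does not increase distances. The only matters demanding a little care are bookkeeping ones — confirming that $B_\varepsilon(\bar A)$ is Borel so that the left-hand side makes sense even when $d_{\bar X}$ is merely a pseudo-metric, and getting the direction of the inclusion right (one needs $B_\varepsilon(A) \subseteq \pi^{-1}(B_\varepsilon(\bar A))$, which is exactly the direction supplied by $\pi$ being $1$-Lipschitz, and not the reverse).
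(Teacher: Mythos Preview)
Your argument is correct and is exactly the standard one: the projection $\pi$ is $1$-Lipschitz, which yields the inclusion $B_\varepsilon(\pi^{-1}(\bar A)) \subseteq \pi^{-1}(B_\varepsilon(\bar A))$, and the Prohorov inequality on $X$ then pushes down to $\bar X$. The paper itself does not supply a proof of this lemma --- it is quoted from \cite{Sy:mmlim}*{Lemma 5.9} --- but your proof is the natural one and is presumably what appears there.
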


\begin{defn}[Quotient mm-space] \label{defn:quotient}
Let $X$ be an mm-space and $G$ a group acting on $X$ isometrically
such that every orbit is closed in $X$.
We equip the quotient space $\bar{X}$ with $d_{\bar{X}}$ and $\mu_{\bar{X}} := \bar\mu_X$,
and call it the \emph{quotient mm-space of $X$ by the $G$-action}.
\end{defn}

\subsubsection{Pyramid}

\begin{defn}[Pyramid] \label{defn:pyramid}
  A subset $\cP \subset \cX$ is called a \emph{pyramid}
  if it satisfies the following (1), (2), and (3).
  \begin{enumerate}
  \item If $X \in \cP$ and if $Y \prec X$, then $Y \in \cP$.
  \item For any two mm-spaces $X, X' \in \cP$,
    there exists an mm-space $Y \in \cP$ such that
    $X \prec Y$ and $X' \prec Y$.
  \item $\cP$ is nonempty and $\square$-closed.
  \end{enumerate}
  We denote the set of pyramids by $\Pi$.

  For an mm-space $X$ we define
  \[
  \cP_X := \{\;X' \in \cX \mid X' \prec X\;\},
  \]
  which is a pyramid.
  We call $\cP_X$ the \emph{pyramid associated with $X$}.
\end{defn}

We observe that $X \prec Y$ if and only if $\cP_X \subset \cP_Y$.
It is trivial that $\cX$ is a pyramid.

\begin{defn}[Weak convergence] \label{defn:w-conv}
  Let $\cP_n, \cP \in \Pi$, $n=1,2,\dots$.
  We say that \emph{$\cP_n$ converges weakly to $\cP$}
  as $n\to\infty$
  if the following (1) and (2) are both satisfied.
  \begin{enumerate}
  \item For any mm-space $X \in \cP$, we have
    $\lim_{n\to\infty} \square(X,\cP_n) = 0$.
  \item For any mm-space $X \in \cX \setminus \cP$, we have
    $\liminf_{n\to\infty} \square(X,\cP_n) > 0$.
  \end{enumerate}
\end{defn}

For an mm-space $X$, a pyramid $\cP$, and $t > 0$, we define
\[
tX := (X,t\,d_X,\mu_X) \quad\text{and}\quad
t\cP := \{\; tX \mid X \in \cP \;\}.
\]
The following is obvious.

\begin{lem}\label{lem:obvious}
\begin{enumerate}
\item Let $\cP$ and $\cP_n$, $n=1,2,\dots$, be pyramids, and let $t, t_n$ be positive real numbers.
If $t_n \to t$ and 
$\cP_n$ converges weakly to $\cP$ as $n\to\infty$, then $t_n\cP_n$ converges weakly to $t\cP$ as $n\to\infty$.
\item If $\{X_n\}_{n=1}^\infty$ is a monotone increasing sequence of mm-spaces with respect to the Lipschitz order, 
then $\cP_{X_n}$ converges weakly to the $\Box$-closure of the union of $\cP_{X_n}$.
\end{enumerate}
\end{lem}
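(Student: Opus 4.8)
The plan is to derive both parts from two elementary facts about the behaviour of the box distance $\square$ under dilation. \emph{First}, for each fixed $t>0$ the dilation map $\cX\ni X\mapsto tX\in\cX$ is bi-Lipschitz for $\square$:
\[
\min(t,1)\,\square(X,Y)\ \le\ \square(tX,tY)\ \le\ \max(t,1)\,\square(X,Y)\qquad(X,Y\in\cX).
\]
This is seen by reusing the same parameters $\varphi,\psi$ and the same set $I_{0}$ in the definition of $\square$: multiplying $d_{X}$ and $d_{Y}$ by $t$ multiplies the metric defect $|\varphi^{*}d_{X}-\psi^{*}d_{Y}|$ by $t$ while leaving the measure threshold $1-\varepsilon$ unchanged, so $\square(tX,tY)\le\max(t,1)\,\varepsilon$ whenever the original data are controlled by $\varepsilon$; applying this to $t^{-1}$ gives the lower bound. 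Moreover a $1$-Lipschitz map $X\to Y$ is still $1$-Lipschitz as a map $tX\to tY$, so dilation preserves the Lipschitz order; hence $\cP_{tX}=t\cP_{X}$, and $t\cP$ is a pyramid whenever $\cP$ is, so the targets in both statements lie in $\Pi$. \emph{Second}, for a fixed mm-space $Z$ and positive reals $s_{n}\to1$ we have $\square(s_{n}Z,Z)\to0$: given $\varepsilon>0$, choose $z_{0}\in Z$ and $R>0$ with $\mu_{Z}(Z\setminus B_{R}(z_{0}))<\varepsilon$, take a parameter $\varphi$ of $Z$ (which also parametrizes every $s_{n}Z$), and set $I_{0}:=\varphi^{-1}(B_{R}(z_{0}))$; on $I_{0}$ one has $\varphi^{*}d_{Z}\le 2R$, so the metric defect between $s_{n}Z$ and $Z$ is at most $2R|s_{n}-1|<\varepsilon$ for $n$ large, while $\cL^{1}(I_{0})=\mu_{Z}(B_{R}(z_{0}))>1-\varepsilon$.

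For part (1), let $t_{n}\to t$ and let $\cP_{n}$ converge weakly to $\cP$. To verify condition (1) of Definition \ref{defn:w-conv} for $t_{n}\cP_{n}\to t\cP$, take $X\in t\cP$, so $Z:=t^{-1}X\in\cP$; choosing $W_{n}\in\cP_{n}$ with $\square(Z,W_{n})\to0$ we have $t_{n}W_{n}\in t_{n}\cP_{n}$, hence $\square(X,t_{n}\cP_{n})\le\square(X,t_{n}W_{n})$ and
\[
\square(X,t_{n}W_{n})\ \le\ \square(tZ,t_{n}Z)+\square(t_{n}Z,t_{n}W_{n})\ \le\ \square(tZ,t_{n}Z)+\max(t_{n},1)\,\square(Z,W_{n})\ \longrightarrow\ 0
\]
by the second fact (ratios $t_{n}/t\to1$) and the first (with $\{t_{n}\}$ bounded). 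To verify condition (2), take $X\in\cX\setminus t\cP$, so $Z:=t^{-1}X\notin\cP$ and $\liminf_{n}\square(Z,\cP_{n})>0$; if we had $\liminf_{n}\square(X,t_{n}\cP_{n})=0$, we could choose a subsequence and $W_{n}\in\cP_{n}$ along it with $\square(X,t_{n}W_{n})\to0$, and then
\[
\square(Z,\cP_{n})\ \le\ \square(Z,W_{n})\ \le\ \square(t^{-1}X,t_{n}^{-1}X)+\max(t_{n}^{-1},1)\,\square(X,t_{n}W_{n})\ \longrightarrow\ 0
\]
along the subsequence (second fact with ratios $t/t_{n}\to1$, first fact with $\{t_{n}^{-1}\}$ bounded), contradicting $\liminf_{n}\square(Z,\cP_{n})>0$.

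For part (2), monotonicity of $\{X_{n}\}$ gives $\cP_{X_{1}}\subset\cP_{X_{2}}\subset\cdots$; put $\cP:=\overline{\bigcup_{n}\cP_{X_{n}}}$, the $\square$-closure. For condition (1) of Definition \ref{defn:w-conv}: given $X\in\cP$ and $\varepsilon>0$, pick $m$ and $Y\in\cP_{X_{m}}$ with $\square(X,Y)<\varepsilon$; then $Y\in\cP_{X_{n}}$ for every $n\ge m$, so $\square(X,\cP_{X_{n}})<\varepsilon$ for $n\ge m$. For condition (2): if $X\in\cX\setminus\cP$ then $\square(X,\cP)>0$ since $\cP$ is $\square$-closed, and $\cP_{X_{n}}\subset\cP$ forces $\square(X,\cP_{X_{n}})\ge\square(X,\cP)>0$ for all $n$. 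It remains to note that $\cP\in\Pi$: it is nonempty and $\square$-closed by construction, and downward-closedness together with directedness of $\bigcup_{n}\cP_{X_{n}}$ (any $Y\in\cP_{X_{m}}$ and $Y'\in\cP_{X_{m'}}$ are both dominated by $X_{\max(m,m')}$) carry over to the $\square$-closure by the standard approximation argument; see \cite{Sy:mmg}.

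The only genuinely non-formal ingredient is the first fact above: a priori the mismatch between rescaling the metric and leaving the $(1-\varepsilon)$-measure threshold fixed could destroy continuity of dilation, and the substance of the argument is that it does not (the second fact is its companion near $t=1$). Everything else is routine triangle-inequality bookkeeping, the only exception in part (2) being that the $\square$-closure of an increasing union of pyramids is again a pyramid, which is standard.
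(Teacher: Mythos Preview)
The paper offers no proof of this lemma; it is simply declared ``obvious'' and used freely thereafter. Your argument is correct and supplies exactly the details that justify the word: the bi-Lipschitz estimate $\min(t,1)\,\square(X,Y)\le\square(tX,tY)\le\max(t,1)\,\square(X,Y)$ and the convergence $\square(s_nZ,Z)\to0$ for $s_n\to1$ are the right elementary inputs, and your triangle-inequality bookkeeping for the two conditions in Definition~\ref{defn:w-conv} is clean. For part~(2), the only point not fully spelled out---that the $\square$-closure of an increasing union of pyramids is again a pyramid---is indeed standard and appears in \cite{Sy:mmg} (it is essentially the statement that a monotone sequence of pyramids converges weakly to the closure of the union), so your citation is appropriate. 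In short: nothing to compare against, and nothing to fix.
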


\begin{thm} \label{thm:emb}
There exists a metric $\rho$ on $\Pi$ compatible with weak convergence
and satisfying the following {\rm(1)}, {\rm(2)}, and {\rm(3)}.
\begin{enumerate}
\item The map
$\iota : \cX \ni X \mapsto \cP_X \in \Pi$
is a $1$-Lipschitz topological embedding map with respect to
$\dconc$ and $\rho$.
\item\label{cpt} $\Pi$ is $\rho$-compact.
\item $\iota(\cX)$ is $\rho$-dense in $\Pi$.
\end{enumerate}
In particular, $(\Pi,\rho)$ is a compactification of $(\cX,\dconc)$.
\end{thm}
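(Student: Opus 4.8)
The plan is to realize $\Pi$ as a closed subset of an explicit compact metrizable space, read the metric $\rho$ off that embedding, and then verify the three listed properties, the compactness being the main point. Using separability of $(\cX,\square)$, fix a countable $\square$-dense sequence $\{Y_i\}_{i\in\N}$ in $\cX$, and let $\ast$ denote the one-point mm-space. Since $\ast$ is dominated by every mm-space, $\ast\in\cP$ for every pyramid $\cP$, so $0\le\square(Y_i,\cP)\le\square(Y_i,\ast)<\infty$. I would set $\Phi(\cP):=(\square(Y_i,\cP))_{i\in\N}$, which lands in the compact metrizable product $\prod_i[0,\square(Y_i,\ast)]$, and define $\rho(\cP,\cQ):=\sum_i 2^{-i}\min\{1,\,|\square(Y_i,\cP)-\square(Y_i,\cQ)|\}$, i.e.\ the pull-back of a compatible product metric. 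Injectivity of $\Phi$, hence that $\rho$ is a genuine metric, holds because $Z\mapsto\square(Z,\cP)$ is $1$-Lipschitz on $(\cX,\square)$ and so is determined by its values on $\{Y_i\}$, while $\cP$ is $\square$-closed and hence equals the zero set of this function. The same $1$-Lipschitz-plus-density argument shows that $\rho(\cP_n,\cP)\to 0$ if and only if $\square(Z,\cP_n)\to\square(Z,\cP)$ for \emph{every} $Z\in\cX$.

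Next I would identify $\rho$-convergence with weak convergence in the sense of Definition~\ref{defn:w-conv}. Given $\square(Z,\cP_n)\to\square(Z,\cP)$ for all $Z$, conditions (1) and (2) are immediate (the limit is $0$ when $Z\in\cP$ and is $\square(Z,\cP)>0$ when $Z\notin\cP$); conversely, condition (1) with the triangle inequality yields $\limsup_n\square(Z,\cP_n)\le\square(Z,\cP)$, and the matching lower bound will come out of the compactness step. For density of $\iota(\cX)$: given a pyramid $\cP$, choose a countable $\square$-dense subset $\{W_k\}\subset\cP$ and, applying axiom~(2) of Definition~\ref{defn:pyramid} inductively, build a $\prec$-increasing chain $Z_1\prec Z_2\prec\cdots$ in $\cP$ with $W_1,\dots,W_k\prec Z_k$; then $\bigcup_k\cP_{Z_k}\subset\cP$ is $\square$-dense in $\cP$, so $\cP$ is its $\square$-closure, and Lemma~\ref{lem:obvious}(2) gives $\cP_{Z_k}\to\cP$ weakly, i.e.\ $\iota(Z_k)\to\cP$. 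That $\iota$ is $1$-Lipschitz would follow from $\rho(\cP_X,\cP_{X'})\le\dconc(X,X')$, proved by bounding each $|\square(Y_i,\cP_X)-\square(Y_i,\cP_{X'})|$ by $\dconc(X,X')$ via a lifting argument (a space dominated by $X$ is close to one dominated by $X'$ once $\dconc(X,X')$ is small); and $\iota$ is a topological embedding because on $\iota(\cX)$ the $\rho$-topology is, by the compatibility above, exactly weak convergence of associated pyramids, which is equivalent to $\dconc$-convergence.

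For compactness, given pyramids $\cP_n$ I would extract by a diagonal argument a subsequence along which $\square(Y_i,\cP_n)$ converges for every $i$; then by $1$-Lipschitzness $g(Z):=\lim_n\square(Z,\cP_n)$ exists for all $Z\in\cX$ and is $1$-Lipschitz, nonnegative, and $\le\square(\cdot,\ast)$. The candidate limit is $\cP:=g^{-1}(0)$, which is nonempty ($\ast\in\cP$) and $\square$-closed. The heart of the argument is to show that $\cP$ is downward-closed under $\prec$, is upward-directed, and satisfies $g=\square(\cdot,\cP)$; from these, $\cP_n\to\cP$ weakly and the missing lower bound in the previous step follow, and a routine subsequence argument then gives the full equivalence of $\rho$- and weak convergence. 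All of this rests on one lifting property of the Lipschitz order with respect to $\square$: if $Y\prec X$ and $\square(X,X')$ is small, then some $Y'\prec X'$ has $\square(Y,Y')$ small. Granting it, downward-closedness is obtained by taking $X_n\in\cP_n$ with $\square(X,X_n)\to0$ (available when $g(X)=0$), lifting $Y\prec X$ to $Y_n\prec X_n\in\cP_n$, and reading off $g(Y)=0$; the other two properties are handled in the same spirit. Once $\Pi$ is sequentially compact and metrizable it is compact, and with the density already established this exhibits $(\Pi,\rho)$ as a compactification of $(\cX,\dconc)$.

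The step I expect to be the main obstacle is exactly this lifting property of the Lipschitz order along box-close perturbations, together with the control of possibly ``escaping'' minimizing sequences $X_n\in\cP_n$ (box-bounded families of mm-spaces are not box-precompact): without it the candidate $g^{-1}(0)$ need not be a pyramid and the two topologies need not match at the lower bound. The remaining ingredients --- separability of $(\cX,\square)$, the triangle inequality, and Tychonoff's theorem --- are routine.
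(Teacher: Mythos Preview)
The paper does not contain a proof of Theorem~\ref{thm:emb}. This result is listed in the preliminaries section, which opens by saying that the definitions and facts there are taken from \cite{Gro:green}*{\S 3$\frac12$} and \cite{Sy:mmg}; the theorem is stated without proof and used as a black box thereafter. There is therefore no in-paper argument against which to compare your proposal.

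As for the proposal itself, the overall architecture---embed $\Pi$ in a compact product via a countable family of $\square$-distance functionals, define $\rho$ by pullback, and prove compactness by extracting limits and checking the pyramid axioms via a lifting lemma---is reasonable and not far in spirit from what is done in \cite{Sy:mmg}. However, two points are genuinely problematic. First, your $1$-Lipschitz claim for $\iota$ with respect to $\dconc$ and your specific $\rho$ amounts to the inequality $|\square(Y,\cP_X)-\square(Y,\cP_{X'})|\le\dconc(X,X')$ for every $Y$, i.e.\ that whenever $Z\prec X$ there exists $Z'\prec X'$ with $\square(Z,Z')\le\dconc(X,X')$. Since $\dconc\le\square$ but not conversely, this is a much stronger lifting statement than the $\square$-lifting you invoke later, and it does not hold with these constants; the metric $\rho$ actually constructed in \cite{Sy:mmg} is built differently (via Hausdorff distances of intersections with certain finite ``test'' families of mm-spaces) precisely so that the $1$-Lipschitz bound against $\dconc$ becomes provable. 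Second, you are right to flag the $\square$-lifting property (if $Y\prec X$ and $\square(X,X')$ is small then some $Y'\prec X'$ is $\square$-close to $Y$) as the crux of the compactness argument; this is a nontrivial lemma in its own right, and without it the candidate $g^{-1}(0)$ need not be a pyramid. Your plan would become a proof once you replace the metric by one compatible with the $\dconc$-Lipschitz bound and supply (or cite) the lifting lemma.
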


Note that we identify $X$ with $\cP_X$ in \S 1.

Combining Propositions \ref{dpdtv}, \ref{prop:box-dP}, \ref{prop:dconc-box},
and Theorem \ref{thm:emb} yields the following

\begin{cor} \label{cor:rho-dTV}
  For any two Borel probability measures $\mu$ and $\nu$ on
  a complete separable metric space $X$, we have
  \begin{align*}
  \rho(\cP_{(X,\mu)},\cP_{(X,\nu)}) &\le \dconc((X,\mu),(X,\mu)) \le \square((X,\mu),(X,\nu)) \\
   &\le 2\dP(\mu,\nu) \le 2\dTV(\mu,\nu).
  \end{align*}
\end{cor}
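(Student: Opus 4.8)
The plan is to simply chain together the four inequalities, each of which has already been supplied in the excerpt, and then observe that the leftmost inequality follows from the $1$-Lipschitz property of the embedding $\iota$ in Theorem \ref{thm:emb}. Concretely, fix a complete separable metric space $X$ together with two Borel probability measures $\mu$ and $\nu$, so that $(X,\mu)$ and $(X,\nu)$ are genuine mm-spaces (after restricting to supports, if one insists on the standing assumption $X=\supp\mu_X$; this does not affect any of the quantities below). First I would invoke Proposition \ref{dpdtv} to get $\dP(\mu,\nu)\le\dTV(\mu,\nu)$, hence $2\dP(\mu,\nu)\le 2\dTV(\mu,\nu)$. Next, Proposition \ref{prop:box-dP} gives $\square((X,\mu),(X,\nu))\le 2\dP(\mu,\nu)$. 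Then Proposition \ref{prop:dconc-box}, applied to the two mm-spaces $(X,\mu)$ and $(X,\nu)$, yields $\dconc((X,\mu),(X,\nu))\le\square((X,\mu),(X,\nu))$. Finally, Theorem \ref{thm:emb}(1) asserts that $\iota:(\cX,\dconc)\to(\Pi,\rho)$ is $1$-Lipschitz, which is exactly the statement $\rho(\cP_{(X,\mu)},\cP_{(X,\nu)})\le\dconc((X,\mu),(X,\nu))$. Stringing these four bounds together in the displayed order produces the asserted chain, and the proof is complete.

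There is essentially no obstacle here: the corollary is purely a bookkeeping consequence of results already established, and the only mild point of care is the typographical one in the statement as printed, where the middle term of the first line reads $\dconc((X,\mu),(X,\mu))$ — this should be $\dconc((X,\mu),(X,\nu))$, and the proof I would write makes clear that it is the observable distance between the two distinct mm-spaces that is being bounded. (If one prefers to leave the statement untouched, note that $\dconc((X,\mu),(X,\mu))=0\le\dconc((X,\mu),(X,\nu))$, so the printed chain is still literally correct, just not sharp in that middle slot.) One could also remark that the corollary extends verbatim to the setting where $\mu$ and $\nu$ are supported on different subsets of $X$, since all four auxiliary results are stated at that level of generality.

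In summary, the proof is the one-line concatenation
\[
\rho(\cP_{(X,\mu)},\cP_{(X,\nu)})
\;\overset{\text{Thm \ref{thm:emb}(1)}}{\le}\;
\dconc((X,\mu),(X,\nu))
\;\overset{\text{Prop \ref{prop:dconc-box}}}{\le}\;
\square((X,\mu),(X,\nu))
\;\overset{\text{Prop \ref{prop:box-dP}}}{\le}\;
2\dP(\mu,\nu)
\;\overset{\text{Prop \ref{dpdtv}}}{\le}\;
2\dTV(\mu,\nu),
\]
and nothing further is required.
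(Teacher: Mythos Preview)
Your proof is correct and is exactly the approach the paper takes: the corollary is stated immediately after the sentence ``Combining Propositions \ref{dpdtv}, \ref{prop:box-dP}, \ref{prop:dconc-box}, and Theorem \ref{thm:emb} yields the following,'' with no further argument given. Your observation about the typo $\dconc((X,\mu),(X,\mu))$ is also correct.
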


\subsection{Decompositions of real, complex, and quaternion matrices}
Let $F$ be one of $\R, \C$ and $\H$, where $\H$ is 
the non-commutative algebra $\H$ of quaternions, 
which is defined as
\[
\H:=\{z:=z_0 + z_1 \i+ z_2 \j +z_3 \k \ |\ z_0, z_1,z_2, z_3 \in \R \}, \quad \i^2=\j^2=\k^2=\i\j\k=-1.
\]
Note that $\R$ and $\C$ are naturally embedded into $\H$.
For $z =z_0 + z_1 \i+ z_2 \j +z_3 \k \in \H$, 
we define 
\begin{gather*}
\Re(z):=z_0, \quad
z^\ast:=z_0-(z_1 \i+ z_2 \j +z_3 \k ).
\end{gather*}
Let $M_{N,n}^F$ denote the set of all $N \times n$ matrices over $F$ and let $M_N^F := M_{N,N}^F$.
For $N \in \N$, we set $N^F:=N  \cdot \dim_\R F$ and 
sometimes  identify $M_{N,n}^F$ with $(F^N)^n$ and $\R^{N^F n}$, 
where we consider $F^N$ as the space of numerical column vectors over $F$.
Let $\{e_l\}_{l=1}^N$ denote the standard basis of $F^N$.
The identity matrix of size $N$ is indicated by $I_{N}$ and
the $N \times n$  zero matrix by $0_{N,n}$.
For $Z=(z^m_l)_{1\leq m\leq N, 1\leq l\leq n} \in M_{N,n}^F$, 
its {\it adjoint} $Z^\ast$ (reap.\ {\it transpose} $\T Z$) 
is the $n \times N$ matrix with $(l,m)$-component $(z^m_l)^\ast$ (resp. $z^m_l$).
The \emph{trace} and the \emph{Frobenius norm} of $Z=(z^m_l)_{1\leq m, l\leq N} \in M_{N}^F$ are respectively defined as 
\[
\tr(Z):=\sum_{l=1}^N z^l_l, \qquad
\|Z\|:=\sqrt{\tr(Z^\ast Z)}.
\]
It follows that 
$(\tr (Z))^\ast=\tr (Z^\ast)$ and  $(ZW)^\ast=W^\ast Z^\ast$ for any $Z, W\in M_{N}^F$. 
Define an $F$-valued function $\lr{\cdot}{\cdot}$ on $M_{N,n}^F \times M_{N,n}^F$ by
\[
\lr{Z}{W}:=\tr (Z^\ast W)
\] 
for $Z,W \in M_{N,n}^F$.
Then, $\Re\lr{\cdot}{\cdot}$ is an $\R$-inner product on $M_{N,n}^F$ and 
we have
\[
\|Z-W\|^2
=\|Z\|^2+\|W\|^2- (\tr(Z^\ast W)+\tr(W^\ast Z))
=\|Z\|^2+\|W\|^2- 2 \Re \lr{Z}{W}.
\]
Although $\lr{\cdot}{\cdot}$ is not $F$-bilinear,
a standard argument proves the Cauchy--Schwarz inequality
$\|\lr{Z}{W} \| \leq \|Z\|\|W\|$ for any $Z,W \in M_{N,n}^F$.
\begin{defn}[Unitary and Hermitian matrices]
We say that a matrix $Z \in M_{N}^F$ is {\it unitary} (resp.\ {\it Hermitian}) if we have $Z^\ast Z=Z Z^\ast=I_N$ (resp.\ $Z^\ast=Z$).
\end{defn}
We remark that  $U \in M_{N}^F$ is unitary if and only if $\|Uz\|=\|z\|$ holds for any $z \in F^N$.
Let  $U^F(N)$ denote the group of unitary matrices of size $N$.
For any Hermitian matrix $H\in M_{N}^F$, 
there exist $U \in U^F(N)$ and $\{\sigma_l\}_{l=1}^N \subset \R$ such that 
\[
H=U \cdot \diag(\sigma_1,\ldots,\sigma_N) \cdot U^\ast
\]
 (see \cite{Zhang}*{Corollary 6.2}), where $\diag(\cdots)$ denotes the diagonal matrix.
This decomposition is called the {\it eigen decomposition} and 
$\sigma_l$, $l=1,\dots,N$, are called the {\it eigenvalues} of $H$.
It is easy to check that
$Z^\ast Z$ for $Z \in M_{N,n}^F$ is Hermitian and all its eigenvalues are non-negative.

Let us introduce two matrix decompositions.
\begin{thm}[cf.~\cite{Zhang}*{Theorems 7.1 and 7.2}] \label{thm:decomp}
For any matrix $Z \in M_{N,n}^F$ with $N \geq n$, 
there exist a Hermitian matrix $H \in M_{n}^F$ with non-negative eigenvalues and 
a matrix $Q \in M_{N,n}^F$ such that 
\[
Z=Q\cdot H, \quad Q^\ast Q=I_n.
\]
In addition, there also exist $U \in U^F(N), V \in U^F(n)$ and 
a monotone non-increasing sequence $\{\lambda_l(Z)\}_{l=1}^n$ of non-negative numbers such that 
\begin{gather*}
Z=U \cdot \Lambda \cdot V^\ast, \quad
\Lambda:=
\begin{pmatrix}
\diag(\lambda_1(Z),\ldots,\lambda_n(Z))\\
0_{N-n,n}
\end{pmatrix},\\
Q= U \cdot 
\begin{pmatrix}
I_n \\
0_{N-n,n}
\end{pmatrix}  \cdot V^\ast, 
\quad
H=V \cdot \diag(\lambda_1(Z),\ldots,\lambda_n(Z))  \cdot V^\ast.
\end{gather*}
\end{thm}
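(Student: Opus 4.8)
The plan is to deduce both decompositions from the singular value decomposition (SVD) over $F$, which is the content of \cite{Zhang}*{Theorems 7.1 and 7.2} cited in the statement, and then read off the polar-type factorization $Z = Q\cdot H$ from the SVD. So the real work is organizing the output of the SVD into the stated block form and checking the algebraic identities.

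First I would invoke the SVD over $F$: for $Z \in M_{N,n}^F$ with $N \ge n$ there exist $U \in U^F(N)$, $V \in U^F(n)$, and a monotone non-increasing sequence $\lambda_1(Z) \ge \cdots \ge \lambda_n(Z) \ge 0$ such that $Z = U\Lambda V^\ast$ with $\Lambda$ the $N\times n$ matrix whose top $n\times n$ block is $\diag(\lambda_1(Z),\dots,\lambda_n(Z))$ and whose bottom $(N-n)\times n$ block is zero. (The $\lambda_l(Z)$ are the square roots of the eigenvalues of the Hermitian, non-negative matrix $Z^\ast Z$, consistent with the remark made just before the theorem.) This immediately gives the second assertion. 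Next I would factor $\Lambda = \begin{pmatrix} I_n \\ 0_{N-n,n}\end{pmatrix}\cdot \diag(\lambda_1(Z),\dots,\lambda_n(Z))$, so that
\[
Z = U\begin{pmatrix} I_n \\ 0_{N-n,n}\end{pmatrix} \diag(\lambda_1(Z),\dots,\lambda_n(Z)) V^\ast
= \left(U\begin{pmatrix} I_n \\ 0_{N-n,n}\end{pmatrix} V^\ast\right)\left(V \diag(\lambda_1(Z),\dots,\lambda_n(Z)) V^\ast\right),
\]
where I have inserted $V^\ast V = I_n$ between the two factors. Defining $Q$ and $H$ to be these two factors yields the displayed formulas for $Q$ and $H$.

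It then remains to verify that this $Q$ and $H$ have the asserted properties. For $H$: it is of the form $V D V^\ast$ with $D = \diag(\lambda_1(Z),\dots,\lambda_n(Z))$ real diagonal, hence $H^\ast = V D^\ast V^\ast = V D V^\ast = H$, so $H$ is Hermitian, and its eigenvalues are exactly the $\lambda_l(Z) \ge 0$ by construction. For $Q$: using $U^\ast U = I_N$ and $V^\ast V = I_n$ together with $\begin{pmatrix} I_n \\ 0_{N-n,n}\end{pmatrix}^\ast \begin{pmatrix} I_n \\ 0_{N-n,n}\end{pmatrix} = I_n$, one computes
\[
Q^\ast Q = V\begin{pmatrix} I_n \\ 0_{N-n,n}\end{pmatrix}^\ast U^\ast U \begin{pmatrix} I_n \\ 0_{N-n,n}\end{pmatrix} V^\ast = V I_n V^\ast = I_n,
\]
as required. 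Finally $Z = QH$ holds by the displayed computation above. This proves the first assertion.

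I do not expect a genuine obstacle here, since the hard analytic input—the existence of the SVD over $\R$, $\C$, and $\H$—is quoted from \cite{Zhang}; the quaternionic case is the only subtle point, but it is exactly what that reference supplies. The only care needed is bookkeeping with the non-commutativity of $\H$: one must keep scalars on the correct side and use $(AB)^\ast = B^\ast A^\ast$ (noted in the excerpt) rather than any commutativity of entries. With that caveat the verification of $Q^\ast Q = I_n$, $H^\ast = H$, and $Z = QH$ is routine.
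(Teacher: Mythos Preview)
Your proposal is correct. Note that the paper does not actually supply a proof of this theorem: it is stated with the citation ``cf.~\cite{Zhang}*{Theorems 7.1 and 7.2}'' and left unproved, with only remarks following about terminology and uniqueness. Your argument---invoking the SVD from \cite{Zhang} and then reading off $Q$ and $H$ by inserting $V^\ast V = I_n$ between the block factors of $\Lambda$---is precisely the standard derivation of the polar decomposition from the SVD, and the verifications of $Q^\ast Q = I_n$ and $H^\ast = H$ are carried out correctly, with appropriate care for the non-commutative case $F = \H$.
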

The two decompositions $Z=QH$ and $Z=U\Lambda V^\ast$
are called \emph{polar} and \emph{singular value decompositions},
respectively.
Although the two matrix decompositions may not be unique in general, 
$\{\lambda_l(Z)\}_{l=1}^n$ is uniquely determined, which coincides with the positive square root of the eigenvalues of $Z^\ast Z$.
We call $\lambda_l(Z)$, $l=1,\dots,n$, the  {\it singular values} of $Z$. 
In the case of  $\lambda_n(Z)>0$, the polar decomposition is unique
and $Q,H$ are given by
\[
Q=ZH^{-1}, \quad
H=(Z^\ast Z)^{1/2}.
\]
\begin{rem}\label{spectrum}
(1)
Given any $U\in U^F(N)$ and $Z \in M_{N,n}^F$, 
we see that $Z=Q\cdot H$ is a polar decomposition of $Z$
if and only if so is $UZ=(UQ)\cdot H$.\\
(2)
We observe that the maximal singular value of a matrix coincides with its spectrum norm.
The triangle inequality for the spectrum norm implies that 
$\lambda_1(Z+W) \leq \lambda_1(Z)+\lambda_1(W)$.
\end{rem}

\subsection{Stiefel manifold and its quotient space}
\begin{defn}[Stiefel manifold]
For $N,n \in \N$ with $N \geq n$, 
the $(N,n)$-{\it Stiefel manifold} $V_{N,n}^F$ over $F$ is the set of all orthonormal $n$-frames in $F^N$, namely 
\[
V_{N,n}^F=\{ (z_1,\ldots, z_n) \in M_{N,n}^F \ |\ \lr{z_l}{z_m}= \delta_{lm}, \ 1 \leq  l, m \leq n  \},
\]
where $\delta_{lm}$ is the Kronecker delta.
Denote by $\nu^{N,n,F}$ the Haar (or uniform) probability measure on $V_{N,n}^F$.
\end{defn}
Note that $V_{N,N}^F=U^F(N)$ and 	
$V_{N,1}^F$ is identified with the $(N^F-1)$-dimensional Euclidean unit sphere.
%

Let us recall  a characterization  of a Haar measure.
Given any $U \in U^F(N)$,  
we define the map $\mathcal{U}^U_{m}:M_{N,m}^F \to M_{N,m}^F$  
by $\mathcal{U}^U_{m}(Z)=UZ$ for $Z \in M_{N,m}^F$.
The map $\mathcal{U}^U_{m}$ is isometric 
and its inverse map is given by $\mathcal{U}^{U^\ast}_{m}$.
\begin{prop}[cf.~\cite{MS}*{Theorem 1.3}] \label{prop:haar}
A Borel probability measure $\nu$ on $V_{N,n}^F$ coincides with $\nu^{N,n,F}$ 
if and only if $\nu$ is left-invariant under the $U^F(N)$-action, that is,
\[
\nu(\mathcal{U}_n^U(B))=\nu(B)
\]
holds for any Borel set $B \subset V_{N,n}^F$ and $U \in U^F(N)$.
\end{prop}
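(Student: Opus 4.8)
The plan is to prove the characterization of the Haar measure $\nu^{N,n,F}$ as the unique left-$U^F(N)$-invariant Borel probability measure on $V_{N,n}^F$ by exhibiting $V_{N,n}^F$ as a homogeneous space of the compact group $U^F(N)$ and invoking uniqueness of the invariant probability measure on such a space. First I would recall that the left $U^F(N)$-action on $V_{N,n}^F$ is transitive: given two orthonormal $n$-frames, one can complete each to an orthonormal basis of $F^N$ and the change-of-basis matrix is unitary, so it maps the first frame to the second. The stabilizer of the base frame $(e_1,\dots,e_n)$ is the subgroup $H$ of unitary matrices fixing the first $n$ standard vectors, which is isomorphic to $U^F(N-n)$ (acting on the orthogonal complement). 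Hence $V_{N,n}^F \cong U^F(N)/H$ as a $U^F(N)$-space, and in particular $V_{N,n}^F$ is compact.

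Next I would use the standard fact that on a compact Hausdorff group $G$ there is a unique bi-invariant (hence in particular left-invariant) Borel probability measure $\mu_G$, and that pushing $\mu_G$ forward under the orbit map $G \ni g \mapsto g\cdot(e_1,\dots,e_n) \in V_{N,n}^F$ yields a left-$U^F(N)$-invariant Borel probability measure on $V_{N,n}^F$. This gives existence. For uniqueness, suppose $\nu$ is any left-$U^F(N)$-invariant Borel probability measure on $V_{N,n}^F$. Pull it back, or rather average: for a Borel set $B$, the function $g \mapsto \nu(\mathcal{U}_n^{g^{-1}}(B))$ is constant equal to $\nu(B)$ by invariance, and a Fubini/averaging argument against $\mu_G$ forces any two such invariant measures $\nu, \nu'$ to agree — concretely, $\nu(B) = \int_G \nu(\mathcal{U}_n^g(B))\,d\mu_G(g)$, and the right-hand side can be rewritten, using invariance of $\mu_G$ and the orbit map, as an integral that depends only on $B$ and $\mu_G$, not on $\nu$. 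Therefore $\nu = \nu^{N,n,F}$, which by definition is exactly this pushforward of the Haar measure of $U^F(N)$.

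The ``only if'' direction is immediate from the definition of $\nu^{N,n,F}$ as the pushforward of the bi-invariant measure on $U^F(N)$: invariance of $\mu_{U^F(N)}$ under left translation translates, under the equivariant orbit map, into invariance of $\nu^{N,n,F}$ under the $\mathcal{U}_n^U$-action, since $\mathcal{U}_n^U$ intertwines with left multiplication on $U^F(N)$. The main obstacle, such as it is, is purely a matter of being careful with the measure-theoretic bookkeeping in the averaging argument — ensuring measurability of $(g,x) \mapsto \mathbf{1}_B(g\cdot x)$ on $U^F(N) \times V_{N,n}^F$ so that Fubini applies — but since all spaces here are compact metric and all actions are continuous, this is routine. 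I would simply cite the cited reference \cite{MS}*{Theorem 1.3} for the precise statement and the standard construction of Haar measure on compact groups, and present the homogeneous-space argument above as the short self-contained justification.
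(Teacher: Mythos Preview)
The paper does not supply its own proof of this proposition; it simply records the statement with the citation to \cite{MS}*{Theorem 1.3}. Your argument via the transitive $U^F(N)$-action, identification of $V_{N,n}^F$ with $U^F(N)/U^F(N-n)$, and the standard existence--uniqueness of invariant probability measures on homogeneous spaces of compact groups is correct and is precisely the classical justification the citation points to.
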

We consider the $(N,n)$-Stiefel manifold over $F$ as an mm-space $V_{N,n}^F=(V_{N,n}^F, \|\cdot\|,\nu^{N,n,F})$.

The unitary group $U^F(n)$ acts on $M_{N,n}^F$ by right multiplication, that is,
\[
M_{N,n}^F \times U^F(n) \ni  (Z,U)\mapsto ZU \in M_{N,n}^F.
\]
We also consider the \emph{Hopf action} on $M_{N,n}^F$ that is the acton of the unitary group $U^F(1)=\{\;t \in F\ |\ \|t\|=1\;\}$ of size $1$ given by left multiplication,
\[
U^F(1) \times  M_{N,n}^F  \ni  (t,Z)\mapsto tZ \in M_{N,n}^F.
\]
Note that every orbit in $M_{N,n}^F$ of $U^F(n)$ and $U^F(1)$ is closed.
We call the two quotient mm-spaces
\[
G_{N,n}^F := V_{N,n}^F/U^F(n) \quad\text{and}\quad \PV_{N,n}^F := U^F(1)\backslash V_{N,n}^F
\]
the $(N,n)$-\emph{Grassmann  manifold}
and the $(N,n)$-\emph{projective Stiefel manifold over $F$},
respectively.
The $(N,n)$-Stiefel manifold over $F$ with distance multiplied by $\sqrt{N^F - 1}$ is identified with 
\begin{align*}
X_{N,n}^F := \{ (z_1,\ldots, z_n) \in M_{N,n}^F \ |\ \lr{z_l}{z_m}=(N^F-1) \delta_{lm}, \ 1 \leq  l, m \leq n  \}
\end{align*}
with 
the Frobenius norm and 
the Haar probability measure $\mu^{N,n,F}$ on $X_{N,n}^F$.
Note that $\mu^{N,n,F}$ is a unique left-invariant Borel probability measure under the $U^F(N)$-action.
\subsection{Gaussian space}\label{sec:Gauss}
For a positive integer $m$, we denote by  $\gamma^m$ 
the (standard) {\it Gaussian measure on $\R^m$ }, 
which is defined for a Lebesgue measurable set $ B \subset \R^m$ by
\[
\gamma^m(B)=(2\pi)^{-m/2} \int_B \exp \left(-\frac{\|x\|^2}{2}\right) dx.
\]
The mm-space $\Gamma^m:=(\R^m, \|\cdot\|, \gamma^m)$ 
is called the \emph{$m$-dimensional} (\emph{standard}) \emph{Gaussian space}.
By Lemma \ref{lem:obvious}(2), as $m\to\infty$, $\cP_{\Gamma^m}$ converges weakly to
the $\square$-closure of the union of $\cP_{\Gamma^m}$, which we call
the \emph{virtual infinite-dimensional Gaussian space} $\cP_{\Gamma^\infty}$.

Let $l \le N$
and let $\pi_l^N(n):M_{N,n}^F \to M_{l,n}^F$ be the projection defined by
\[
\pi_l^N(n):M_{N,n}^F \ni
\begin{pmatrix}
z^1_1, &\ldots, &z^1_n\\
\vdots & &\vdots\\
z^N_1 &\ldots, &z^N_n
\end{pmatrix}
\mapsto
\begin{pmatrix}
z^1_1, &\ldots, &z^1_n\\
\vdots & &\vdots\\
z^l_1 &\ldots, &z^l_n
\end{pmatrix}
\in M_{l,n}^F.
\]
We set $\pi^N_l := \pi^N_l(1) : F^N \to F^l$.
The projections $\pi^N_l$ and $\pi_l^N(n)$ are both $1$-Lipschitz continuous, preserving Gaussian measures,
and equivariant under the $U^F(1)$-Hopf action and the $U^F(n)$-action, respectively.
We remark that the $U^F(1)$-Hopf action and the $U^F(n)$-action for $n=1$
do not coincide with each other in the case where $F = \H$,
because of the non-commutativity of $\H$.
We have the quotient maps
$\overline{\pi}^N_l : U^F(1)\backslash F^N\to U^F(1)\backslash F^l$ and
$\overline{\pi}^{N}_l(n) : M_{N,n}^F/U^F(n)\to M_{l,n}^F/U^F(n)$,
which are both $1$-Lipschitz continuous by Lemma \ref{lem:equivdomi}.
Note that the actions of $U^F(1), U^F(n)$ on $F^N, M_{N,n}^F$ each preserve
the Gaussian measure.
We denote by $U^F(1)\backslash\Gamma^{N^F}$ and $\Gamma^{N^F n}/U^F(n)$
the quotient mm-spaces of $(F^N,\|\cdot\|,\gamma^{N^F})$ and $(M_{N,n}^F, \|\cdot\|,\gamma^{N^F  n})$
by the $U^F(1)$-Hopf and $U^F(n)$ actions, respectively (see Definition \ref{defn:quotient}).
Then, the sequences $\{U^F(1)\backslash\Gamma^{N^F}\}_{N=1}^\infty$ and
$\{\Gamma^{N^F  n }/U^F(n)\}_{N=1}^\infty$
are both monotone increasing with respect to the Lipschitz order.
Therefore, the associated pyramids converge weakly to the $\Box$-closure of the unions,
\[
\cP_{U^F(1)\backslash\Gamma^{\infty}}
:=
\overline{\bigcup_{N=1}^\infty \cP_{U^F(1)\backslash\Gamma^{N^F}}}^\Box,
\qquad
\cP_{\Gamma^{\infty n}/U^F(n)}:=\overline{ \bigcup_{N=1}^\infty \cP_{\Gamma^{N^F  n }/U^F(n)}}^\Box,
\]
respectively.
We remark that for each positive integer $n$ the quotient mm-space of $(M^F_{N,n},\|\cdot\|,\gamma^{N^F n})$
by the $U^F(1)$-Hopf action is mm-isomorphic to $U^F(1)\backslash \Gamma^{N^F n}$
whose associated pyramid converges weakly to $\cP_{U^F(1)\backslash\Gamma^{\infty}}$
as $N \to \infty$.


Let us close this section with two approximations related to the Gaussian measure.
\begin{prop}[\cite{Watson}*{\S II}] \label{thm:MB}
For any $l,n \in \N$, $\lim_{m \to \infty}\dP(\pi^{m}_l(n)_\# \mu^{m,n,F},\gamma^{l^F n})=0$.
\end{prop}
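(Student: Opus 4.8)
The plan is to realize the Haar measure $\nu^{m,n,F}$ on the Stiefel manifold $V^F_{m,n}$ as the law of the unitary factor in the polar decomposition of a Gaussian matrix, and then to read off the convergence from the law of large numbers through one explicit coupling of the measures $\mu^{m,n,F}$ across all $m$.

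First I would fix $n$ and $l$ and, for each $m \ge l$, let $A_m \in M_{m,n}^F$ be the matrix whose entries $z^j_k$ ($1\le j\le m$, $1\le k\le n$) are read off from a single fixed array of i.i.d.\ standard Gaussians indexed by $(j,k)$ with $j \ge 1$; then $A_m$ has law $\gamma^{m^F n}$ for every $m$, and all the $A_m$ live on one probability space. Write the polar decomposition $A_m = Q_m H_m$ with $Q_m \in V^F_{m,n}$, $Q_m^\ast Q_m = I_n$, and $H_m = (A_m^\ast A_m)^{1/2}$ as in Theorem \ref{thm:decomp}; since $\lambda_n(A_m) > 0$ almost surely, this decomposition is a.s.\ unique, with $Q_m = A_m H_m^{-1}$. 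Every $U \in U^F(m)$ acts on $M_{m,n}^F \cong \R^{m^F n}$ by a Euclidean isometry, so $\gamma^{m^F n}$ is $U^F(m)$-invariant; combining this with Remark \ref{spectrum}(1) (if $A_m = Q_m H_m$ then $U A_m = (U Q_m) H_m$) and the a.s.\ uniqueness shows that the law of $Q_m$ is left-invariant under $U^F(m)$, hence equals $\nu^{m,n,F}$ by Proposition \ref{prop:haar}. In particular $\sqrt{m^F-1}\,Q_m$ has law $\mu^{m,n,F}$, so $\pi^m_l(n)\bigl(\sqrt{m^F-1}\,Q_m\bigr)$ has law $\pi^m_l(n)_\#\mu^{m,n,F}$.

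Next, since taking the first $l$ rows commutes with right multiplication and $\sqrt{m^F-1}$ is a real scalar, I would write
\[
\pi^m_l(n)\!\left(\sqrt{m^F-1}\,Q_m\right)
= \pi^m_l(n)(A_m)\cdot\left(\sqrt{m^F-1}\,H_m^{-1}\right).
\]
Under the coupling above, $\pi^m_l(n)(A_m)$ is the fixed $l\times n$ block $B := (z^j_k)_{1\le j\le l,\ 1\le k\le n}$, which has law $\gamma^{l^F n}$ and does not depend on $m$. The columns of $A_m$ being i.i.d.\ standard Gaussians in $F^m \cong \R^{m^F}$, the strong law of large numbers gives $\frac{1}{m^F}A_m^\ast A_m \to I_n$ almost surely; as matrix square root and inversion are continuous near $I_n$, this yields $\sqrt{m^F}\,H_m^{-1} = \bigl(\frac{1}{m^F}A_m^\ast A_m\bigr)^{-1/2} \to I_n$, hence $\sqrt{m^F-1}\,H_m^{-1} \to I_n$ almost surely. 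Therefore $\pi^m_l(n)\bigl(\sqrt{m^F-1}\,Q_m\bigr) \to B \cdot I_n = B$ almost surely, so the laws $\pi^m_l(n)_\#\mu^{m,n,F}$ converge weakly to $\gamma^{l^F n}$. Since $M_{l,n}^F \cong \R^{l^F n}$ is separable, weak convergence of Borel probability measures is metrized by $\dP$, which is exactly the claimed statement.

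The routine ingredients are the strong law of large numbers and the continuity of matrix square root and inversion near the identity. The step I expect to need the most care is the identification of $Q_m$ with a Haar-distributed frame (via $U^F(m)$-invariance of $\gamma^{m^F n}$, uniqueness of the polar decomposition, and Proposition \ref{prop:haar}), together with keeping the scaling $\sqrt{m^F-1}$ straight against the scaling $\sqrt{m^F}$ natural for the Wishart matrix $A_m^\ast A_m$. Alternatively, this proposition is essentially contained in Watson \cite{Watson}*{\S II}.
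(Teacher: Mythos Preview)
Your argument is correct. The paper itself does not supply a proof of this proposition: it is stated with the citation \cite{Watson}*{\S II} and is followed only by the remark that it generalizes the Maxwell--Boltzmann distribution law. So there is no in-paper proof to compare against.

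That said, your route---realize the Haar frame as the unitary factor in the polar decomposition of a Gaussian matrix, couple all $m$ via a single infinite i.i.d.\ array, and use the strong law of large numbers to show $\sqrt{m^F-1}\,H_m^{-1}\to I_n$ almost surely---is exactly the classical argument, and is in the spirit of Watson's treatment. The only points worth double-checking (and you have handled them) are: (i) the identification of the law of $Q_m$ with $\nu^{m,n,F}$ via $U^F(m)$-left-invariance of $\gamma^{m^Fn}$ and a.s.\ uniqueness of the polar decomposition, which is precisely Proposition~\ref{prop:haar} together with Remark~\ref{spectrum}(1); and (ii) the normalization $\frac{1}{m^F}A_m^\ast A_m\to I_n$, which is correct because under the identification $F\cong\R^{\dim_\R F}$ each entry of $A_m$ contributes $\dim_\R F$ independent real standard Gaussians, so the diagonal entries of $A_m^\ast A_m$ are sums of $m^F$ i.i.d.\ $\chi^2_1$ variables. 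Your final step, that weak convergence of laws on the Polish space $M^F_{l,n}$ is equivalent to $\dP$-convergence, is also standard.
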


Proposition \ref{thm:MB} is a generalization of the Maxwell-Boltzmann distribution law.

\begin{prop}[Stirling's approximation] \label{lem:stir}
Let $\Gamma$ be the Gamma function.
There exists a decreasing function $\rho:(0,\infty) \to (0,\infty)$ such that 
\[
\Gamma(x)=\frac{\Gamma(x+1)}{x}=\sqrt{ \frac{2\pi}{x}} \left(\frac{x}{e}\right)^x e^{\rho{(x)}}, 
\quad
\rho(x) \in\left(0, \frac{1}{12x}\right).
\]
\end{prop}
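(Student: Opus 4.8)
The plan is to normalize the unknown away. Define
\[
\rho(x) := \log\Gamma(x) - \left(x-\tfrac12\right)\log x + x - \tfrac12\log(2\pi)
\qquad (x>0),
\]
so that the displayed identity $\Gamma(x)=\sqrt{2\pi/x}\,(x/e)^x e^{\rho(x)}$ becomes a tautology, while the first equality $\Gamma(x)=\Gamma(x+1)/x$ is merely the functional equation of $\Gamma$. What remains is to prove three things about this explicit $\rho$: that it is positive, that it is strictly decreasing, and that $\rho(x)<1/(12x)$.

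I would deduce all three from Binet's first formula,
\[
\rho(x) = \int_0^\infty h(t)\,\frac{e^{-xt}}{t}\,dt,
\qquad
h(t) := \frac12 - \frac1t + \frac{1}{e^t-1} = \frac12\coth\frac t2 - \frac1t ,
\]
valid for all $x>0$ (the integral converges since $h(t)\sim t/12$ as $t\downarrow 0$ and $h(t)\to\tfrac12$ as $t\to\infty$). Establishing this representation — with exactly this integrand and exactly the additive constant $\tfrac12\log(2\pi)$ — is the one genuinely nontrivial step, and it is where I expect the main obstacle to lie. I would either cite it or derive it by the Abel--Plana summation formula applied to $\sum_{k\ge0}(x+k)^{-1}$ (equivalently, from a Malmst\'en-type integral formula for $\log\Gamma$), fixing the additive constant by a single classical evaluation such as Wallis' product.

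Granting the formula, the rest is elementary. Since $s>\tanh s$ for $s>0$ we have $s\coth s>1$, hence $h(t)=\tfrac12\bigl(\coth(t/2)-2/t\bigr)>0$ for every $t>0$; positivity of the integrand gives $\rho(x)>0$. Differentiating under the integral sign, $\rho'(x)=-\int_0^\infty h(t)e^{-xt}\,dt<0$, so $\rho$ is strictly decreasing on $(0,\infty)$. Finally, the Mittag--Leffler expansion $\coth s=\frac1s+\sum_{k\ge1}\frac{2s}{s^2+\pi^2k^2}$ gives $\coth s-\frac1s<\sum_{k\ge1}\frac{2s}{\pi^2k^2}=\frac{s}{3}$, i.e.\ $h(t)<t/12$, whence
\[
\rho(x) = \int_0^\infty \frac{h(t)}{t}\,e^{-xt}\,dt
< \frac1{12}\int_0^\infty e^{-xt}\,dt = \frac{1}{12x},
\]
which is the asserted bound.

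As a check and a fallback, positivity and the upper bound can be obtained for all real $x>0$ without Binet's formula, via the exact shift identity $\rho(x)-\rho(x+1)=\bigl(x+\tfrac12\bigr)\log\bigl(1+\tfrac1x\bigr)-1=\sum_{k\ge1}\frac{u^{2k}}{2k+1}$ with $u=(2x+1)^{-1}$: the sum is positive and, bounded by the geometric series, is $<\tfrac1{12x}-\tfrac1{12(x+1)}$, so $\rho$ decreases and $\rho-\tfrac1{12x}$ increases along each lattice $\{x+n\}_{n\ge0}$; together with $\rho(x+n)\to0$ this pins $0<\rho(x)<1/(12x)$. Genuine monotonicity on all of $(0,\infty)$ by this route still requires the digamma inequality $\psi(x)<\log x-\tfrac1{2x}$, itself a Binet-type fact, so I expect the integral-representation argument to be the most economical to carry out in full, and that is the route I would take.
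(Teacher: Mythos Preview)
Your argument is correct: Binet's first integral formula gives exactly the representation you wrote, and from it the positivity, strict decrease, and the bound $\rho(x)<1/(12x)$ follow as you indicate. The fallback telescoping argument is also sound for the bounds (and you are right that full monotonicity on $(0,\infty)$ by that route needs an extra input).

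However, there is nothing to compare against: the paper does not prove this proposition. It is stated as the classical Stirling approximation with remainder and used as a black box (e.g.\ in \S\ref{ssec:Gaussian} to control $g_m(\sqrt{m-1})$). So your write-up supplies a complete proof where the paper simply cites a standard fact. If you want to match the paper's level of detail, a one-line reference to any standard source for Binet's formula (Whittaker--Watson, Artin, or Andrews--Askey--Roy) would suffice; if you want to include a proof, the integral-representation route you outlined is the clean one.
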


\section{Relation between Gaussian space and Stiefel manifold}
For any $\varepsilon,r>0$ and $m \in \N$, we set   
\[
A^{m}(r)_{\varepsilon}:=B_{\varepsilon r}\left(S^{m-1}(r)\right)
=\{x \in \R^m\ |\ (1-\varepsilon)r <\|x\|<(1+\varepsilon)r\}, \quad
A^m_\varepsilon:=A^{m}(\sqrt{m-1})_{\varepsilon}.
\]
In this section, we first provide a sufficient condition 
for $\{\varepsilon_m\}_{m=1}^\infty$ being $\lim_{m\to\infty}\gamma^m (A^{m}_{\varepsilon_m}) =1$.
Using the sufficient condition, 
we prove that 
the Prohorov distance between $\gamma^{N^F n}$ and $\mu^{N,n,F}$ does not vanish asymptotically
(see Theorem \ref{prok})
although 
$\gamma^{N^F n}$ concentrates around $X_{N,n}^F$ (see Theorem \ref{thm:fullmeas}),
where we regard $\mu^{N,n,F}$ as a probability measure on $\R^{N^F n}$ via the natural embedding  $X_{N,n}^F \subset \R^{N^F n}$.
%

\subsection{Behavior of Gaussian measure} \label{ssec:Gaussian}
Consider the function
\[
g_m(r) :
= \frac{\vol (S^{m-1}(1))}{(2\pi)^{m/2}}r^{m-1} e^{-r^2/2}
=\frac{2^{(2-m)/2}}{\Gamma(m/2)}r^{m-1} e^{-r^2/2}, 
\]
which is the density of the radial distribution of the $m$-dimensional Gaussian measure.
This satisfies 
\begin{equation*}
g_m(r) \leq g_m(\sqrt{m-1})
=
\frac{e^{-\rho(m/2)}}{\sqrt{\pi}} e^{1/2}\left(1-\frac1m\right)^{(m-1)/2}\xrightarrow{m \to \infty}
\frac1{\sqrt{\pi}}
\end{equation*}
by Lemma \ref{lem:stir}.
Moreover, $g_m(\sqrt{m-1})$ is monotone decreasing in $m$.
We directly compute  
\begin{gather}\label{eq:gauss} 
\gamma^{m}(A^m_{\varepsilon})
=
\int_{(1- \varepsilon) \sqrt{m-1}}^{(1+ \varepsilon) \sqrt{m-1}} g_{m}(r) dr
\leq 
\frac{e^{-\rho(m/2)}}{\sqrt{\pi}} e^{1/2}\left(1-\frac1m\right)^{(m-1)/2}\cdot 2\varepsilon\sqrt{m-1}, \\ 
\label{eq:gau}
1-\gamma^{m}(A^m_{\varepsilon})
=
\int_{0}^{(1- \varepsilon_m) \sqrt{m-1} } g_{m}(r) dr+\int_{(1+ \varepsilon_m)\sqrt{m-1}}^\infty  g_{m}(r)  dr.
\end{gather}
We provide a sufficient condition for $\{\varepsilon_m\}_{m=1}^\infty$ 
such that  $\lim_{m \to 1} \gamma^m (A^{m}_{\varepsilon_m})=1$,
based on the idea of \cite[Lemma 6.1]{Sy:mmlim}.
%
\begin{lem}\label{lem:zero}
The zero $t_0$ of the function given by 
\[
G(t):=e^{-t^2/2}-\int_t^\infty e^{-s^2/2} ds, \qquad t \in \R.
\]
is unique and lies in $(0,1)$.
\end{lem}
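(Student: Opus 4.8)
The plan is to analyze $G$ through its derivative together with a few sign evaluations. First I would record that $G$ is $C^1$ on $\R$ and that $\frac{d}{dt}\int_t^\infty e^{-s^2/2}\,ds = -e^{-t^2/2}$, so that $G'(t) = -t\,e^{-t^2/2} + e^{-t^2/2} = (1-t)\,e^{-t^2/2}$. Hence $G$ is strictly increasing on $(-\infty,1]$ and strictly decreasing on $[1,\infty)$, which already restricts $G$ to at most one zero on each of these two intervals and shows $G$ attains its global maximum at $t=1$.

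Next I would pin down the signs of $G$ at $t=0$ and $t=1$. At $t=0$ we get $G(0) = 1 - \int_0^\infty e^{-s^2/2}\,ds = 1 - \frac{1}{2}\sqrt{2\pi} = 1 - \sqrt{\pi/2} < 0$, since $\pi > 2$. At $t=1$ I would use the crude tail estimate $\int_1^\infty e^{-s^2/2}\,ds < \int_1^\infty s\,e^{-s^2/2}\,ds = e^{-1/2}$, where the inequality is strict because $s > 1$ on the range of integration; this gives $G(1) = e^{-1/2} - \int_1^\infty e^{-s^2/2}\,ds > 0$. Combining these two evaluations with the continuity and strict monotonicity of $G$ on $(-\infty,1]$ yields a unique zero $t_0$ of $G$ in $(-\infty,1]$, and since $G(0) < 0 < G(1)$ it lies in the open interval $(0,1)$; moreover $G < 0$ on $(-\infty,t_0)$ and $G > 0$ on $(t_0,1]$.

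It then remains to rule out a second zero in $(1,\infty)$. Here I would invoke $\lim_{t\to\infty} G(t) = 0$, which holds because both $e^{-t^2/2} \to 0$ and the convergent tail $\int_t^\infty e^{-s^2/2}\,ds \to 0$ as $t \to \infty$. If $G$ vanished at some $t_1 > 1$, then strict decrease on $[1,\infty)$ would force $G(t) < G(t_1) = 0$ for every $t > t_1$, contradicting this limit; hence $G > 0$ on $[1,\infty)$ and $t_0$ is the unique zero of $G$ on all of $\R$, lying in $(0,1)$. The computation is elementary throughout; the one point requiring a little care is the behavior at $+\infty$, where rather than comparing the decay rates of the two terms of $G$ directly one should combine strict monotonicity with the fact that both terms vanish, and verifying $G(1) > 0$ via the tail bound $\int_1^\infty e^{-s^2/2}\,ds < \int_1^\infty s\,e^{-s^2/2}\,ds$ is the only other step that is not immediate.
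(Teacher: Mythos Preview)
Your proof is correct and follows essentially the same approach as the paper: compute $G'(t)=(1-t)e^{-t^2/2}$, note $G(0)=1-\sqrt{\pi/2}<0$, and use $\lim_{t\to\infty}G(t)=0$ together with the monotonicity to locate the unique zero in $(0,1)$. The only minor difference is that you verify $G(1)>0$ directly via the tail bound $\int_1^\infty e^{-s^2/2}\,ds<\int_1^\infty s\,e^{-s^2/2}\,ds=e^{-1/2}$, whereas the paper infers $G(1)>0$ from the strict decrease on $[1,\infty)$ combined with the limit $0$ at infinity; both are valid and the arguments are otherwise identical.
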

\begin{proof}
The lemma follows from 
the intermediate value theorem and the following properties
\[
G'(t)=e^{-t^2/2}(-t+1), \quad
G(0)=1-\sqrt{\frac{\pi}{2}}<0=\lim_{t\to \infty} G(t) <G(1). 
\]
\end{proof}
\begin{rem}\label{rem:zero}
If we set 
\begin{align*}
G_m^+(r)
&:=e^{-(r\sqrt{m-1})^2/2}-\int_{(1+r)\sqrt{m-1}}^\infty e^{-(\sqrt{m-1}-s)^2/2} ds
=e^{-(r\sqrt{m-1})^2/2}-\int_{r\sqrt{m-1}}^\infty e^{-s^2/2} ds,\\
G_m^-(r)
&:=e^{-(r\sqrt{m-1})^2/2}-\int_0^{(1-r)\sqrt{m-1}} e^{-(\sqrt{m-1}-s)^2/2} ds
=e^{-(r\sqrt{m-1})^2/2}+\int_{\sqrt{m-1}}^{r\sqrt{m-1}}e^{-s^2/2} ds,
\end{align*}
then $G_m^-(r)>G_m^+(r)=G(r\sqrt{m-1})$ always holds and 
$G_m^+(r) \geq 0$ if $r\sqrt{m-1} \geq t_0$.
\end{rem}
%
\begin{lem}\label{lem:gauss}
If $\lim_{m \to \infty} \varepsilon_m \sqrt{m-1} =\infty$, 
then $\lim_{m \to \infty} \gamma^m(A^m_{\varepsilon_m})=1$.
\end{lem}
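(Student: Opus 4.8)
The goal is to show $\lim_{m\to\infty}\gamma^m(A^m_{\varepsilon_m})=1$ under the hypothesis $\varepsilon_m\sqrt{m-1}\to\infty$. Using \eqref{eq:gau}, it suffices to show that both tail integrals
\[
\int_{0}^{(1-\varepsilon_m)\sqrt{m-1}} g_m(r)\,dr
\quad\text{and}\quad
\int_{(1+\varepsilon_m)\sqrt{m-1}}^\infty g_m(r)\,dr
\]
tend to $0$ as $m\to\infty$. The plan is to bound each integral by a quantity governed by $G_m^\pm(\varepsilon_m)$ from Remark \ref{rem:zero}, which in turn equals (or is comparable to) $G(\varepsilon_m\sqrt{m-1})$, and then exploit that $G(t)\to 0$ as $t\to\infty$ together with $\varepsilon_m\sqrt{m-1}\to\infty$.

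First I would handle the outer tail $\int_{(1+\varepsilon_m)\sqrt{m-1}}^\infty g_m(r)\,dr$. Writing $g_m(r)=c_m r^{m-1}e^{-r^2/2}$ with $c_m=2^{(2-m)/2}/\Gamma(m/2)$, the idea is to factor out the peak value: on $[(1+\varepsilon_m)\sqrt{m-1},\infty)$ one has $r^{m-1}e^{-r^2/2}\le (m-1)^{(m-1)/2}e^{-(m-1)/2}\cdot e^{-((1+\varepsilon_m)\sqrt{m-1}-s)^2/2}$-type domination after a change of variable, so that the integral is controlled by $g_m(\sqrt{m-1})$ times an integral of the form $\int_{(1+\varepsilon_m)\sqrt{m-1}}^\infty e^{-(\sqrt{m-1}-s)^2/2}\,ds$. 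Since $g_m(\sqrt{m-1})$ is bounded (indeed $\to 1/\sqrt\pi$ by Lemma \ref{lem:stir}), and by Remark \ref{rem:zero} this remaining integral is at most $e^{-(\varepsilon_m\sqrt{m-1})^2/2}=e^{-(\varepsilon_m\sqrt{m-1})^2/2}$ (via $G_m^+(\varepsilon_m)=G(\varepsilon_m\sqrt{m-1})$, which is nonnegative once $\varepsilon_m\sqrt{m-1}\ge t_0$, hence the integral is bounded by the first term $e^{-(\varepsilon_m\sqrt{m-1})^2/2}$), the hypothesis $\varepsilon_m\sqrt{m-1}\to\infty$ forces this contribution to $0$. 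The inner tail $\int_0^{(1-\varepsilon_m)\sqrt{m-1}} g_m(r)\,dr$ is treated analogously, now using $G_m^-(\varepsilon_m)$: the same peak-factoring reduces it to $g_m(\sqrt{m-1})$ times $\int_0^{(1-\varepsilon_m)\sqrt{m-1}} e^{-(\sqrt{m-1}-s)^2/2}\,ds$, and Remark \ref{rem:zero} together with $G_m^-(\varepsilon_m)>G_m^+(\varepsilon_m)=G(\varepsilon_m\sqrt{m-1})\ge 0$ bounds this by $e^{-(\varepsilon_m\sqrt{m-1})^2/2}$ as well; again $\varepsilon_m\sqrt{m-1}\to\infty$ kills it. (If $\varepsilon_m\ge 1$ the inner integral is empty, so we may assume $\varepsilon_m<1$ eventually.)

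The one technical point requiring care — and the main obstacle — is the peak-factoring step: justifying the inequality $r^{m-1}e^{-r^2/2}\le (m-1)^{(m-1)/2}e^{-(m-1)/2}\, e^{-(\sqrt{m-1}-r)^2/2}$ for $r$ in the relevant range. This is equivalent to $(m-1)\log(r/\sqrt{m-1}) \le -\tfrac12 r^2 + \tfrac12(m-1) + \tfrac12 r^2 - \sqrt{m-1}\,r + \tfrac12(m-1)$, i.e. to a one-variable concavity estimate $(m-1)\log u \le (m-1) - \sqrt{m-1}\cdot\sqrt{m-1}\,u + \cdots$ after substituting $u=r/\sqrt{m-1}$; it follows from the elementary bound $\log u \le u-1$ applied appropriately. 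Once that inequality is in hand, everything else is bookkeeping: split $1-\gamma^m(A^m_{\varepsilon_m})$ as in \eqref{eq:gau}, bound each piece by $g_m(\sqrt{m-1})\cdot e^{-(\varepsilon_m\sqrt{m-1})^2/2}$ (up to harmless constants), use boundedness of $g_m(\sqrt{m-1})$ and $\varepsilon_m\sqrt{m-1}\to\infty$ to conclude each piece vanishes, and hence $\gamma^m(A^m_{\varepsilon_m})\to 1$.
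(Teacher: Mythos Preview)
Your proposal is correct and follows essentially the same route as the paper: split via \eqref{eq:gau}, establish the Gaussian envelope $g_m(r)\le g_m(\sqrt{m-1})\,e^{-(\sqrt{m-1}-r)^2/2}$, and then bound each tail integral by $g_m(\sqrt{m-1})\,e^{-(\varepsilon_m\sqrt{m-1})^2/2}$ using Remark~\ref{rem:zero} once $\varepsilon_m\sqrt{m-1}>t_0$. The only cosmetic difference is in the justification of the envelope inequality: the paper obtains it from $\frac{d^2}{dr^2}\log g_m(r)\le -1$ together with the vanishing of the first derivative at $r=\sqrt{m-1}$, integrated twice, whereas you reduce it (after the substitution $u=r/\sqrt{m-1}$) to $\log u\le u-1$; these are equivalent derivations of the same bound.
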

\begin{proof}
Since we observe 
\[
\frac{d}{dr}\log g_m(r)\big|_{r=\sqrt{m-1}}=0, \quad
\frac{d^2}{dr^2}\log g_m(r)
= -1 -\frac{m-1}{r^2} \leq -1,
\]
it holds for any  $r>0$ that
\begin{align*}
\log g_m(r)-\log g_m(\sqrt{m-1}) 
&=\int_r^{\sqrt{m-1}}\int_{t}^{\sqrt{m-1}} \left( \frac{d^2}{ds^2}\log g_m(s) \right) ds dt \\
&
\leq \int_r^{\sqrt{m-1}}\int_{t}^{\sqrt{m-1}} (-1) ds dt 
=-\frac12 (\sqrt{m-1}-r)^2, 
\end{align*}
providing
$g_m(r) \leq g_m(\sqrt{m-1}) \exp(-(\sqrt{m-1}-r)^2/2 )$.
We deduce from  this and Remark \ref{rem:zero} that 
if $\varepsilon_m \sqrt{m-1} >t_0$,
then we see 
\begin{align*}
\int_0^{(1- \varepsilon_m) \sqrt{m-1} } g_{m}(r) dr
&
\leq  g_{m}(\sqrt{m-1})  \int_0^{(1- \varepsilon_m) \sqrt{m-1} } e^{- (\sqrt{m-1}-r)^2/2} dr  \\
&
\leq g_{m}(\sqrt{m-1}) e^{-(\varepsilon_m \sqrt{m-1})^2/2},\\
\int_{(1+ \varepsilon_m) \sqrt{m-1} }^\infty g_{m}(r) dr
&
\leq  g_{m}(\sqrt{m-1})  \int_{(1+ \varepsilon_m) \sqrt{m-1} }^\infty e^{- (\sqrt{m-1}-r)^2/2} dr  \\
&\leq g_{m}(\sqrt{m-1}) e^{-(\varepsilon_m \sqrt{m-1})^2/2}. 
\end{align*}
This with \eqref{eq:gau} leads to
\begin{align*}
\liminf_{m \to \infty} \left( 1-\gamma^{m}(A^m_{\varepsilon_m}) \right)
\leq 
\liminf_{m \to \infty}
\left\{g_{m}(\sqrt{m-1}) \cdot 2e^{-(\varepsilon_m \sqrt{m-1})^2/2}\right\}
=0.
\end{align*}
The proof is completed.
\end{proof}
\subsection{Prohorov distance between $\gamma^{N^F n}$ and $ \mu^{N,n_N,F}$}
\begin{proof}[Proof of Theorem \ref{prok}]
By the definition of the Prohorov distance, 
it holds for any
$D_{N}>\dP(\gamma^{N^F n_N}, \mu^{N,n_N,F})$ that 
\begin{align*}
1-D_N&=
\nu^{N,n_N,F}(X_{N,n_N}^{F})-D_N \\
&\leq 
\gamma^{N^{F}   n_N}(B_{D_N}(X_{N,n_N}^{F})) 
\leq 
\left( \gamma^{N^{F}}( B_{D_N}(X_{N,1}^{F})) \right)^{n_N} \\ 
&\leq \left\{
\frac{e^{-\rho(N^F/2)}}{\sqrt{\pi}} e^{1/2}\left(1-\frac1{N^F}\right)^{(N^F-1)/2}\cdot 2D_N \right\}^{n_N},
\end{align*}
where we use the property 
that $B_{D_N} (X_{N,n_N}^{F})\subset (B_{D_N}(X_{N,1}^{F}))^{n_N}$ 
in the second inequality
and the last inequality follows from \eqref{eq:gauss}.
Since
\begin{align*}
\log\left\{e^{1/2}\left(1-\frac1{m}\right)^{(m-1)/2}\right\}^m
= \frac{m}2\left\{1-(m-1)\left(\frac1{m}+\frac1{2m^2}+o\left(\frac1{m^2}\right)\right) \right\} = \frac{1}{4}+o(1)
\end{align*}
and $e(1-1/m)^{m-1}>1$ for any $m \in \N$,
the property $n_N \leq N  \leq N^{F}$ implies
\begin{align*}
1  
\leq \liminf_{N\to\infty} 
\left\{D_N+\gamma^{N^{F} n_N} (B_{D_N} (X_{N,n_N}^{F})) \right\}
\leq
\liminf_{N\to\infty} 
\left\{D_N+ \left(\frac{2 D_N}{\sqrt{\pi}}\right)^{n_N} e^{1/4} \right\},
\end{align*}
providing $\liminf_{N\to\infty} D_N>0$.
Since $D_{N}>\dP(\gamma^{N^F n_N}, \mu^{N,n_N,F})$ is arbitrary, 
we conclude  $\liminf_{N\to\infty} \dP(\gamma^{N^F n_N}, \mu^{N,n_N,F})>0$.
This completes the proof.
\end{proof}

\subsection{$\gamma^{N^F n}$ concentrates around $X_{N,n}^{F}$}
Given a function $\theta:(0,\infty) \to (0,\infty)$ and $\varepsilon>0$,
we define the \emph{$(\varepsilon, \theta)$-approximation space $X_{N,n,\varepsilon,\theta}^F$ 
of $X_{N,n}^{F}$} as       
\begin{align*}
X_{N,n,\varepsilon,\theta}^F
:=\left\{(z_1,\ldots,z_n) \in M_{N,n}^F \biggm| \|z_l\| \in A^{N^F}_\varepsilon,
                            \left\|\lr{\frac{z_l}{\|z_l\|}}{\frac{z_m}{\|z_m\|}}\right\|< \theta(\varepsilon), \ 1 \leq  l<m \leq n\right\},
\end{align*}
where we identify $F^N$ with $\R^{N^F}$.

The purpose of this subsection is  to prove the following theorem.
\begin{thm}\label{thm:fullmeas}
Suppose that  a sequence $\{n_N\}_{N=1}^\infty \subset \N$  satisfies 
\begin{equation}\label{eq:ass}
\sup_N \left( 2\log n_N - \frac{a'}{4} \cdot \left(\frac{N}{n_N}\right)^{1-a} \right) < \infty
\quad 
\text{\ for some } a, a' \in (0,1)
\end{equation}
and put 
\begin{gather*}
p_N:=\log_{(N-1)} n_N,\quad 
a_N:=\frac{a}{2}(1-p_N), \quad
\varepsilon_N:=(N-1)^{-a_N}.
\end{gather*}
If we choose 
a function $\theta:(0,\infty) \to (0,\infty)$ as 
\begin{equation}\label{theta}
\theta(\varepsilon):= \left(\frac{5\varepsilon^2(1+\varepsilon)}{(1-\varepsilon)+5\varepsilon^2(1+\varepsilon)}\right)^{1/2},
\end{equation}
then we have 
\[
\lim_{N \to \infty}\varepsilon_N=0, \quad
\lim_{N \to \infty}\gamma^{N^F n_N}(X^F_{N,n_N,\varepsilon_{N},\theta}) = 1.
\]
In the case that
\begin{align}\label{eq:condi}
\sup_N \left( 2\log n_N - \frac{a'}{4} \sqrt{\frac{N}{n_N^3}} \right) < \infty
\quad 
\text{\ for some } a' \in (0,1),
\end{align}
the statement also holds true if we set $a_N:=(1+p_N)/4$.
\end{thm}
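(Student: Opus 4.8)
The plan is to exploit that $\gamma^{N^Fn}$ is the product measure $(\gamma^{N^F})^{\otimes n}$ under the identification $M_{N,n}^F\cong(\R^{N^F})^n$, so that a $\gamma^{N^Fn_N}$-random matrix has independent columns $z_1,\dots,z_{n_N}\sim\gamma^{N^F}$, and then to bound the complement of $X^F_{N,n_N,\varepsilon_N,\theta}$ by a union bound over its two kinds of defining constraints:
\[
1-\gamma^{N^Fn_N}(X^F_{N,n_N,\varepsilon_N,\theta})
\ \le\ n_N\,\gamma^{N^F}(\R^{N^F}\setminus A^{N^F}_{\varepsilon_N})
\ +\ \binom{n_N}{2}\,p_N,
\]
where $p_N:=(\gamma^{N^F}\otimes\gamma^{N^F})\bigl(\{(z,w):\|\lr{z/\|z\|}{w/\|w\|}\|\ge\theta(\varepsilon_N)\}\bigr)$. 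It then remains to show that each summand tends to $0$, and $\lim_N\varepsilon_N=0$ will drop out along the way.

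The whole argument turns on the single quantity $\varepsilon_N^2(N^F-1)$. Since $p_N=\log_{(N-1)}n_N$ means $(N-1)^{p_N}=n_N$, the choice $a_N=\tfrac a2(1-p_N)$ gives $\varepsilon_N^2=(N-1)^{-a(1-p_N)}=(n_N/(N-1))^a$, hence, using $N^F-1\ge(N-1)\dim_\R F$ and $n_N\ge1$,
\[
\varepsilon_N^2(N^F-1)\ \ge\ \dim_\R F\cdot n_N\Bigl(\tfrac{N-1}{n_N}\Bigr)^{1-a}\ \ge\ \dim_\R F\,(N-1)^{1-a};
\]
under \eqref{eq:condi} with $a_N=(1+p_N)/4$ the same computation gives $\varepsilon_N^2=((N-1)n_N)^{-1/2}$, so $\varepsilon_N^2(N^F-1)\ge\dim_\R F\sqrt{(N-1)/n_N}\ge\tfrac1{\sqrt2}\dim_\R F\,n_N\sqrt{N/n_N^3}$. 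In particular $\varepsilon_N^2(N^F-1)$ grows at least like $(N-1)^{1-a}$ (resp.\ $\sqrt{(N-1)/n_N}$), hence faster than $\log N$; consequently $\varepsilon_N\sqrt{N^F-1}>t_0$ for large $N$ (with $t_0$ as in Lemma \ref{lem:zero}), $(N-1)/n_N\to\infty$ (resp.\ $N/n_N^3\to\infty$), and $\varepsilon_N=(N-1)^{-a_N}\to0$ (immediate under \eqref{eq:condi} since $a_N\ge1/4$). Moreover \eqref{eq:ass} (resp.\ \eqref{eq:condi}) is exactly the statement that $2\log n_N\le\tfrac{a'}{2}((N-1)/n_N)^{1-a}+O(1)$ (resp.\ an analogous bound with $\sqrt{N/n_N^3}$), with $a'<1$; combined with $n_N((N-1)/n_N)^{1-a}\ge((N-1)/n_N)^{1-a}$ this will make $2\log n_N$ only a fraction $<1$ of $\varepsilon_N^2(N^F-1)$, plus a constant.

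For the first summand, the inequality $g_m(r)\le g_m(\sqrt{m-1})\exp(-(\sqrt{m-1}-r)^2/2)$ and the boundedness of $g_m(\sqrt{m-1})$ from the proof of Lemma \ref{lem:gauss} give, once $\varepsilon_N\sqrt{N^F-1}>t_0$, the bound $\gamma^{N^F}(\R^{N^F}\setminus A^{N^F}_{\varepsilon_N})\le 2\,g_{N^F}(\sqrt{N^F-1})\,e^{-\varepsilon_N^2(N^F-1)/2}$, so the summand is at most $\exp\bigl(\log n_N-\tfrac12\varepsilon_N^2(N^F-1)+O(1)\bigr)\to0$ by the previous paragraph. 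For the second summand, unitary invariance of $\gamma^{N^F}$ lets one replace $z/\|z\|$ by $e_1$, so $p_N=\gamma^{N^F}(\{w:\|w^1\|\ge\theta(\varepsilon_N)\|w\|\})$ with $w^1\in F\cong\R^{\dim_\R F}$ the first coordinate of $w$; the event is $\|w^1\|^2\ge\frac{\theta(\varepsilon_N)^2}{1-\theta(\varepsilon_N)^2}(\|w\|^2-\|w^1\|^2)$, where $\|w^1\|^2\sim\chi^2_{\dim_\R F}$ and $\|w\|^2-\|w^1\|^2\sim\chi^2_{N^F-\dim_\R F}$ are independent. Since $\frac{\theta(\varepsilon)^2}{1-\theta(\varepsilon)^2}=\frac{5\varepsilon^2(1+\varepsilon)}{1-\varepsilon}\ge5\varepsilon^2$ by \eqref{theta} and $\|w\|^2-\|w^1\|^2$ concentrates near $(N-1)\dim_\R F$, a Chernoff bound for $\chi^2$-laws (with the $\Gamma$-ratios estimated by Proposition \ref{lem:stir}) yields $p_N\le\mathrm{poly}(N)\,\exp\bigl(-\tfrac54\dim_\R F\,\varepsilon_N^2(N-1)\bigr)$. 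Hence $\binom{n_N}{2}p_N\le\exp\bigl(2\log n_N-\tfrac54\dim_\R F\,\varepsilon_N^2(N-1)+O(\log N)\bigr)$, and because $\tfrac54\dim_\R F>\tfrac12>\tfrac{a'}{2}$ while the logarithmic term is swamped by $\varepsilon_N^2(N^F-1)\gtrsim N^{1-a}$, this also tends to $0$. Adding the two estimates proves the claim; the case \eqref{eq:condi} is handled identically with $\sqrt{N/n_N^3}$ in the role of $(N/n_N)^{1-a}$.

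The part I expect to be the genuine obstacle is the exponent bookkeeping in the last two steps: one must verify that the concentration ``gain'' $\varepsilon_N^2(N^F-1)$, which is of order $n_N(N/n_N)^{1-a}$, really dominates the union-bound ``loss'' $2\log n_N$ \emph{uniformly} in $F\in\{\R,\C,\H\}$ and over all admissible $a,a'\in(0,1)$, and it is precisely here that the hypothesis \eqref{eq:ass}/\eqref{eq:condi}, the particular exponents $a_N$, and the constant $5$ in the definition \eqref{theta} of $\theta$ are used. The only other nonroutine ingredient is the distributional reduction of $\|\lr{z/\|z\|}{w/\|w\|}\|$ to a $\chi^2$-ratio and a $\chi^2$-tail bound that is uniform in $F$.
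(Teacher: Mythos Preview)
Your argument is correct and takes a genuinely different route from the paper's.

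The paper proceeds \emph{conditionally and iteratively}: it introduces auxiliary parameters $b_N,\varepsilon_{N,l},T_{N,l}$, proves a geometric containment (Lemma~\ref{lem:subset})
\[
\bigl(B^F(T_{N,l})\bigr)^l\times A^{(N-l)^F}_{\varepsilon_{N,l}}\ \subset\ A^{N,F}_{\varepsilon_N,\theta}[Uz_1,\dots,Uz_l],
\]
and uses it to lower-bound the conditional probability that $z_{l+1}$ lands in the good set given $z_1,\dots,z_l$; multiplying these bounds yields $\gamma^{N^Fn_N}(X^F_{N,n_N,\varepsilon_N,\theta})\ge(1-v_N^F)^{n_N}$, with $n_Nv_N^F\to0$ established in Lemma~\ref{lem:full}. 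You instead bound the complement directly by a union bound over the $n_N+\binom{n_N}{2}$ defining constraints, and handle the angle constraints in one stroke by $U^F(N)$-invariance of $\gamma^{N^F}$, reducing $\|\langle z/\|z\|,w/\|w\|\rangle\|$ to a ratio of independent $\chi^2$ variables and a Chernoff tail bound.

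Your route is shorter and avoids the machinery of Lemmas~\ref{lem:limit}--\ref{lem:full}; it also makes transparent that the specific constant~$5$ in \eqref{theta} is irrelevant for \emph{this} theorem---any $\theta$ with $\theta(\varepsilon)^2/(1-\theta(\varepsilon)^2)\ge c\varepsilon^2$ would do. What the paper's approach buys is that Lemma~\ref{lem:subset} is reused later (Remarks~\ref{rem:angle} and the optimality discussion after Lemma~\ref{lem:Lip-const}) to argue that the exponent $1/3$ in \eqref{eq:condi} is essentially sharp for their method; your union bound does not feed that discussion. Two small points worth tightening in a write-up: your symbol $p_N$ collides with the paper's $p_N=\log_{(N-1)}n_N$; and under \eqref{eq:condi} the claim that the $O(\log N)$ prefactor is swamped relies on $n_N=o(N^{1/3})$ (so $\sqrt{(N-1)/n_N}\gg N^{1/3}$), which does follow from \eqref{eq:condi} but deserves a sentence.
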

Note that \eqref{eq:condi} coincides with ($*$) in the introduction.
In the rest of this section, we always suppose \eqref{eq:ass} or \eqref{eq:condi}.
Note that 
\eqref{eq:ass} follows from \eqref{eq:condi} (see Remark \ref{rem:condi}\eqref{c1}\eqref{c3}) and 
$(1+p_N)/2 \leq 1-p_N$ is equivalent to $p_N \leq 1/3$.
It turns out that 
\eqref{eq:ass} (resp. \eqref{eq:condi}) is equivalent to
\[
\sup_N \left\{ n_N^2  \exp\left(-\frac{a'}{4}\left(\frac{N-1}{n_N}\right)^{1-a}\right)\right\}<\infty
\ 
\left(\text{resp.} 
\sup_N \left\{n_N^2  \exp\left(-\frac{a'}{4}\sqrt{\frac{N-1}{n_N^3}}\right)\right\}<\infty
\right),
\]
which yields for any $r>0$ that 
\begin{gather}\label{eq:lim}
\lim_{N\to \infty} (N-1)^{r(p_N-1)}=\lim_{N\to \infty} \left(\frac{n_N}{N-1}\right)^r=0 \quad
\left(
\text{resp.}\ 
\lim_{N\to \infty} (N-1)^{r(3p_N-1)}=0
\right).
\end{gather}
We may assume that $n_N \leq N-1$ without loss of generality.
We in addition assume that $p_N \le 1/3$ if \eqref{eq:condi} is satisfied.
In this case, $n_N/(N-1) \leq \varepsilon_N^2 \leq 1$ holds for any $N \in \N$.
To prove Theorem \ref{thm:fullmeas}, we define $b_l, \varepsilon_{N,l}, T_{N,l}$ for $1 \leq l \leq n_N-1$ by
\begin{align*}
b_N:&=1-\varepsilon_N, \\
\varepsilon_{N,l}
:&=1-(1-b_N\varepsilon_N) \sqrt{\frac{N-1}{(N-l)-1}} \\
 &=\frac{1}{\sqrt{(N-l)-1}}\left(b_N\varepsilon_N \sqrt{N-1}-\frac{l}{\sqrt{N-1}+\sqrt{(N-l)-1}}\right),\\
T_{N,l}
:&=\left\{\frac{(N-l)-1}{l}(\varepsilon_{N}-\varepsilon_{N,l})(\varepsilon_{N}+\varepsilon_{N,l}+2) \right\}^{1/2}\\
&=\left\{\frac{(N-l)-1}{l} ((1+\varepsilon_{N})^2-(1+\varepsilon_{N,l})^2 )\right\}^{1/2}.
\end{align*}
%
\begin{lem}\label{lem:limit}
The sequence $\{\varepsilon_{N,l}\}_{l=1}^{n_N-1}$ is monotone decreasing and $\varepsilon_{N,1}<\varepsilon_{N}$.
For any $1 \leq l \leq n_N-1$, we have
\begin{gather*}
\lim_{N \to \infty}\varepsilon_{N}=0, \
\lim_{N \to \infty}\varepsilon_N \sqrt{N^F-1}=\infty, \
\lim_{N \to \infty}\varepsilon_{N,l}\sqrt{(N-l)^F-1} =\infty, \ 
\lim_{N \to \infty}T_{N,l}=\infty.
\end{gather*}
\end{lem}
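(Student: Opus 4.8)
The plan is to deduce the whole statement from the single fact that $n_N=o(N)$, which is contained in \eqref{eq:lim} (take $r=1$; in the case \eqref{eq:condi} one even gets $n_N^3/(N-1)=(N-1)^{3p_N-1}\to0$). From $n_N=o(N)$ I would first record two consequences: since $N-n_N\sim N-1$, for every $l$ with $1\le l\le n_N-1$ one has $(N-l)-1\ge N-n_N\ge1$ and $1\le(N-1)/((N-l)-1)\le(N-1)/(N-n_N)\to1$, uniformly in $l$; and, writing $\varepsilon_N=(N-1)^{-a_N}$ out, $\varepsilon_N^2=(n_N/(N-1))^{a}$ in the case \eqref{eq:ass} while $\varepsilon_N^2=((N-1)n_N)^{-1/2}$ in the case \eqref{eq:condi}, so in either case $\varepsilon_N\to0$ and $\varepsilon_N^2(N-1)\to\infty$ by \eqref{eq:lim}; using $N^F-1\ge N-1$ this already gives $\lim_{N\to\infty}\varepsilon_N=0$ and $\lim_{N\to\infty}\varepsilon_N\sqrt{N^F-1}=\infty$.

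For the monotonicity I would use the closed form $\varepsilon_{N,l}=1-(1-b_N\varepsilon_N)\sqrt{(N-1)/((N-l)-1)}$: the coefficient $1-b_N\varepsilon_N=1-\varepsilon_N+\varepsilon_N^2$ is positive while $\sqrt{(N-1)/((N-l)-1)}$ strictly increases with $l$ (as $(N-l)-1$ decreases), so $l\mapsto\varepsilon_{N,l}$ is strictly decreasing. For $\varepsilon_{N,1}<\varepsilon_N$, note that $1-b_N\varepsilon_N\ge b_N$ is equivalent to $b_N(1+\varepsilon_N)=1-\varepsilon_N^2\le1$, hence holds, so
\[
(1-b_N\varepsilon_N)\sqrt{\tfrac{N-1}{N-2}}>1-b_N\varepsilon_N\ge b_N=1-\varepsilon_N,
\]
i.e. $\varepsilon_{N,1}<\varepsilon_N$; together with monotonicity this gives $\varepsilon_{N,l}<\varepsilon_N$ for all $l$. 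Moreover the squeeze $1\le(N-1)/((N-l)-1)\to1$ forces $\varepsilon_{N,l}\to0$ uniformly in $l$, so $\varepsilon_N+\varepsilon_{N,l}+2\to2$ uniformly and in particular is $\ge1$ for all large $N$ — this is the step where it matters that $T_{N,l}$, a priori only a formal square root, is genuinely real and positive.

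The computational core is the identity
\[
\varepsilon_N-\varepsilon_{N,l}=\frac{1}{\sqrt{(N-l)-1}}\left(\varepsilon_N^2\sqrt{N-1}+\frac{b_N\,l}{\sqrt{N-1}+\sqrt{(N-l)-1}}\right),
\]
obtained from the second closed form of $\varepsilon_{N,l}$ by rationalizing $\sqrt{(N-l)-1}-\sqrt{N-1}$; both summands being positive, $\varepsilon_N-\varepsilon_{N,l}\ge\varepsilon_N^2\sqrt{N-1}/\sqrt{(N-l)-1}>0$. For the third limit I would start from $\varepsilon_{N,l}\sqrt{(N-l)-1}=b_N\varepsilon_N\sqrt{N-1}-l/(\sqrt{N-1}+\sqrt{(N-l)-1})\ge b_N\varepsilon_N\sqrt{N-1}-n_N/\sqrt{N-1}$; the subtracted term is $o(\varepsilon_N\sqrt{N-1})$ (a direct check from the explicit $\varepsilon_N$ and \eqref{eq:lim}), so the right-hand side is $\ge\frac14\varepsilon_N\sqrt{N-1}\to\infty$ for large $N$, uniformly in $l$, and multiplying by $\sqrt{((N-l)^F-1)/((N-l)-1)}\ge1$ gives $\varepsilon_{N,l}\sqrt{(N-l)^F-1}\to\infty$. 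Finally, combining the bounds,
\[
T_{N,l}^2=\frac{(N-l)-1}{l}\,(\varepsilon_N-\varepsilon_{N,l})(\varepsilon_N+\varepsilon_{N,l}+2)\ge\frac{\sqrt{(N-l)-1}}{l}\,\varepsilon_N^2\sqrt{N-1}\ge\frac{\sqrt{N-n_N}}{n_N}\,\varepsilon_N^2\sqrt{N-1}\ge\frac{(N-1)\varepsilon_N^2}{\sqrt2\,n_N}
\]
for all large $N$ (using $\varepsilon_N+\varepsilon_{N,l}+2\ge1$, that $\sqrt{(N-l)-1}/l$ decreases in $l$, $l\le n_N$, and $N-n_N\ge(N-1)/2$); substituting $\varepsilon_N^2$ yields $T_{N,l}^2\ge\frac1{\sqrt2}\big(\tfrac{N-1}{n_N}\big)^{1-a}$ in the case \eqref{eq:ass} and $T_{N,l}^2\ge\frac1{\sqrt2}\big(\tfrac{N-1}{n_N^3}\big)^{1/2}$ in the case \eqref{eq:condi}, both tending to $\infty$ by \eqref{eq:lim}. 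The one genuinely delicate point throughout is keeping every estimate uniform over the $N$-dependent index range $1\le l\le n_N-1$; this is precisely why $n_N=o(N)$ — which simultaneously bounds $(N-l)-1$ from below and forces $\varepsilon_{N,l}\to0$ — must be isolated at the very start.
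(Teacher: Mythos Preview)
Your proof is correct and follows essentially the same route as the paper's: both are direct computations from the explicit formulas for $\varepsilon_N$, $\varepsilon_{N,l}$ and $T_{N,l}$, reducing everything to the single input $n_N/(N-1)\to0$ (resp.\ $n_N^3/(N-1)\to0$) from \eqref{eq:lim}. The only cosmetic differences are that you verify $\varepsilon_{N,1}<\varepsilon_N$ via the inequality $1-b_N\varepsilon_N\ge b_N$ rather than the paper's direct expansion, and you extract the lower bound $\varepsilon_N-\varepsilon_{N,l}\ge \varepsilon_N^2\sqrt{N-1}/\sqrt{(N-l)-1}$ from your rationalized identity, whereas the paper uses the slightly cruder $\varepsilon_N-\varepsilon_{N,l}\ge \varepsilon_N(1-b_N)=\varepsilon_N^2$ coming from the other closed form; both feed into the same estimate $T_{N,l}^2\gtrsim \varepsilon_N^2(N-1)/n_N\to\infty$.
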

\begin{proof}
%
The first statement follows from a direct computation as
\begin{align*}
&\varepsilon_{N,l-1}-\varepsilon_{N,l}
=
(1-b_N\varepsilon_N)\sqrt{N-1}\left(\frac{1}{\sqrt{(N-l)-1}}-\frac{1}{\sqrt{N-l}}\right)
> 0, \\
&\varepsilon_{N}-\varepsilon_{N,l}
=(1-b_N\varepsilon_N) \left(\sqrt{\frac{N-1}{(N-l)-1}}-1\right)+\varepsilon_N(1-b_N)> 0.
%
\end{align*}
It follows from \eqref{eq:lim} that $\lim_{N \to \infty}\varepsilon_{N}=0$.
We  moreover have
\begin{gather*}
\log_{(N-1)}\varepsilon_N \sqrt{N^F-1}
\geq
\log_{(N-1)}\varepsilon_N \sqrt{N-1}
=
\frac12-a_N
=
\begin{cases}
\displaystyle
\frac{1-a+ap_N}{2}
\geq\frac{1-a}{2}
&\text{for \eqref{eq:ass}},  \\
\displaystyle
\frac{1-p_N}{4}
\geq \frac16 
&\text{for \eqref{eq:condi}}, 
\end{cases}\\
\log_{(N-1)} \frac{\varepsilon_N(N-1)}{n_N}
\geq
\log_{(N-1)} \frac{\varepsilon_N^2(N-1)}{n_N}
= 1-2a_N-p_N
=
\begin{cases}
\displaystyle
 (1-a)(1-p_N)& \text{for \eqref{eq:ass}},  \\
\displaystyle
\frac{1-3p_N}{2}
& \text{for \eqref{eq:condi}}.
\end{cases}
\end{gather*}
These and \eqref{eq:lim} together yield that 
\[
\lim_{N \to \infty} \varepsilon_N \sqrt{N-1}=\infty, 
\quad
\lim_{N \to \infty} \frac{\varepsilon_N^2(N-1)}{n_N}=\infty,
\quad
\lim_{N \to \infty} \frac{n_N}{\varepsilon_N(N-1)}=0.
\]
Since we observe that 
\begin{align}\label{en}\notag
\varepsilon_{N,l}\sqrt{(N-l)^F-1}
%
&\geq
\varepsilon_{N,n_N-1} \sqrt{N-n_N}  \\ 
&=\varepsilon_N \sqrt{N-1} \left(b_N-\frac{\sqrt{N-1}}{\sqrt{N-1}+\sqrt{N-n_N}}\cdot \frac{n_N-1}{\varepsilon_N(N-1)} \right),\\ \notag
T_{N,l}^2
&=\frac{(N-l)-1}{l}  (\varepsilon_N-\varepsilon_{N,l})  (\varepsilon_{N}+\varepsilon_{N,l}+2) \\ \label{tn}
&\geq \frac{(N-l)-1}{l} \cdot \varepsilon_N(1-b_N) \cdot 2
\geq 2  \frac{\varepsilon_N^2(N-1)}{n_N} \cdot \frac{n_N}{n_N-1} \frac{N-n_N}{N-1},
\end{align}
we have 
$\lim_{N \to \infty}\varepsilon_{N,l}\sqrt{(N-l)^F-1} =\infty$ and  
$\lim_{N \to \infty}T_{N,l}=\infty$.
This completes the proof.
\end{proof}
\begin{cor}\label{cor:en}
For all sufficiently large $N$, we have
\[
\varepsilon_{N,l} \sqrt{(N-l)^F-1} \geq  a' \varepsilon_{N} \sqrt{N-1}, \quad
1 \leq l \leq  n_{N}-1. 
\]
\end{cor}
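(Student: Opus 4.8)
The plan is to extract the claimed $l$-uniform lower bound directly from the estimate \eqref{en} already established inside the proof of Lemma \ref{lem:limit}. Recall that \eqref{en} asserts, for every $1 \le l \le n_N-1$,
\[
\varepsilon_{N,l}\sqrt{(N-l)^F-1} \;\ge\; \varepsilon_N\sqrt{N-1}\left( b_N - \frac{\sqrt{N-1}}{\sqrt{N-1}+\sqrt{N-n_N}}\cdot\frac{n_N-1}{\varepsilon_N(N-1)}\right),
\]
where crucially the right-hand side does not depend on $l$. So it suffices to show that the factor in parentheses is eventually at least $a'$; dividing by the positive quantity $\varepsilon_N\sqrt{N-1}$ then yields the corollary with a single threshold $N_0$ that works simultaneously for all admissible $l$.

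To do this I would simply pass to the limit $N \to \infty$ in the bracketed factor. By Lemma \ref{lem:limit} we have $\varepsilon_N \to 0$, hence $b_N = 1 - \varepsilon_N \to 1$; the ratio $\sqrt{N-1}/(\sqrt{N-1}+\sqrt{N-n_N})$ lies in $(0,1)$; and $(n_N-1)/(\varepsilon_N(N-1)) \le n_N/(\varepsilon_N(N-1)) \to 0$, the last limit being one of those recorded in the proof of Lemma \ref{lem:limit}. Therefore the subtracted term tends to $0$ and the whole bracket tends to $1$. Since $a' \in (0,1)$, there exists $N_0$ such that the bracket exceeds $a'$ whenever $N \ge N_0$, which is exactly what is needed.

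I do not expect any real obstacle here: the corollary is merely a quantitative, $l$-uniform sharpening of the divergence $\lim_{N\to\infty}\varepsilon_{N,l}\sqrt{(N-l)^F-1} = \infty$ from Lemma \ref{lem:limit}, and every ingredient — the explicit bound \eqref{en}, its independence of $l$, and the decay $n_N/(\varepsilon_N(N-1)) \to 0$ — is already in hand. The only mild point of care is to state the conclusion so that the threshold $N_0$ is chosen once and for all rather than depending on $l$, which is immediate because the right-hand side of \eqref{en} does not involve $l$.
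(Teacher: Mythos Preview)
Your proposal is correct and follows essentially the same argument as the paper: both invoke the $l$-independent lower bound \eqref{en} and then use that the bracketed factor tends to $1$ (via $b_N\to 1$ and $(n_N-1)/(\varepsilon_N(N-1))\to 0$) to conclude it eventually exceeds $a'\in(0,1)$. Your version is slightly more explicit about the $l$-uniformity, but the substance is identical.
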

\begin{proof}
By \eqref{en}, 
we have 
\[
\varepsilon_{N,l} \sqrt{(N-l)^F-1}
\geq
\varepsilon_N \sqrt{N-1} \left(b_N-\frac{\sqrt{N-1}}{\sqrt{N-1}+\sqrt{N-n_N}}\cdot \frac{n_N-1}{\varepsilon_N(N-1)} \right).
\]
Then the corollary follows from the fact
\[
\lim_{N \to \infty} \left(b_N-\frac{\sqrt{N-1}}{\sqrt{N-1}+\sqrt{N-n_N}}\cdot \frac{n_N-1}{\varepsilon_N(N-1)} \right)=1.
\]
\end{proof}
%
To compute $\gamma^{N^F n }(X_{N,n,\varepsilon,\theta}^{F})$, 
let us regard  $U^F(N)$ as a subgroup of $U^{\R}(N^F)$ (see Lemma \ref{lem:unit})
and put 
\[
A^{N,F}_{\varepsilon, \theta}[z_1,\ldots,z_l]:=
\left\{z_{l+1} \in A^{N^F }_{\varepsilon} \ \bigg|\  \left\| \lr{\frac{z_m}{\|z_m\|}}{\frac{z_{l+1}}{\|z_{l+1}\|}} \right\|
< \theta(\varepsilon),\ 1 \leq  m \leq l \right\}
\]
for any  $(z_1,\ldots,z_n) \in  X_{N,n,\varepsilon,\theta}^{F}$ and $1 \le l \le n-1$.
We then have 
\[
\gamma^{N^F n }(X_{N,n,\varepsilon,\theta}^{F})
=\int_{z_1 \in A^{N^F }_{\varepsilon}} 
 \int_{z_{2} \in A^{N,F}_{\varepsilon,\theta }[z_1]} \cdots 
 \int_{z_n \in A^{N,{F}}_{\varepsilon,\theta}[z_1,\ldots,z_{n-1}]}
d\gamma^{N^F}(z_{n}) \cdots d\gamma^{N^F }(z_2)  d\gamma^{N^F }(z_1).
\]
%
%
Choose $U \in U^F(N)$ satisfying that $\lr{e_m}{Uz_l}=0$ 
for any pair $(m,l)$ with  $m>l$.
According to the $U^\R(N^F )$-invariance of $\gamma^{N^F }$, 
it holds that 
\[
\gamma^{N^F } (A^{N,F}_{\varepsilon,\theta}[z_1,\ldots,z_l])
=\gamma^{N^F } (\mathcal{U}^{U}_1( A^{N,F}_{\varepsilon,\theta}[z_1,\ldots,z_l]))
=\gamma^{N^F } (A^{N,F}_{\varepsilon,\theta}[Uz_1,\ldots,Uz_l]).
\]
\begin{lem}\label{lem:subset}
For any $1 \leq l \leq n_N-1$, 
if we choose $\theta$ as in \eqref{theta}, then we have 
\[
\left(B^{F}(T_{N,l})\right)^{l} \times A^{(N-l)^{F}}_{\varepsilon_{N,l}} \subset A^{N,F}_{\varepsilon_N,\theta}[Uz_1,\ldots,Uz_l], \quad
B^{F}(T):=\{ z \in F \ |\ \|z\|<T\}.
\]
\end{lem}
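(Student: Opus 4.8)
The plan is to unpack both containments defining $A^{N,F}_{\varepsilon_N,\theta}[Uz_1,\dots,Uz_l]$ for a point $w = (w_1,\dots,w_l, z_{l+1})$ with $w_j \in B^F(T_{N,l})$ (thought of as a vector in $F^N$ supported on the last $N-l$ coordinates after the chosen rotation — more precisely, $U z_j$ has its first $l$ coordinates arranged so that $\langle e_m, U z_l\rangle = 0$ for $m > l$, so the relevant orthogonality is measured against the first $l$ basis directions) and $z_{l+1} \in A^{(N-l)^F}_{\varepsilon_{N,l}}$. First I would write $z_{l+1} = (v, \zeta)$ where $v \in F^l$ is the ``top'' block and $\zeta \in F^{N-l}$ the ``bottom'' block, and similarly decompose each $U z_j$ so that its bottom block has norm comparable to $\sqrt{N-1}$ (within the $\varepsilon_N$-annulus) and its top block is the one being controlled. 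The hypothesis $z_{l+1}\in A^{(N-l)^F}_{\varepsilon_{N,l}}$ should be interpreted as saying the bottom block $\zeta$ lies in the annulus $A^{(N-l)^F}_{\varepsilon_{N,l}}$, i.e. $(1-\varepsilon_{N,l})\sqrt{(N-l)^F - 1} < \|\zeta\| < (1+\varepsilon_{N,l})\sqrt{(N-l)^F-1}$, and $v \in (B^F(T_{N,l}))^l$ so $\|v\|^2 < l\, T_{N,l}^2$.

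The two things to verify are: (a) $\|z_{l+1}\| = \sqrt{\|v\|^2 + \|\zeta\|^2} \in A^{N^F}_{\varepsilon_N}$, and (b) the normalized inner products $\left\|\left\langle \frac{U z_m}{\|U z_m\|}, \frac{z_{l+1}}{\|z_{l+1}\|}\right\rangle\right\| < \theta(\varepsilon_N)$ for $1 \le m \le l$. For (a), the lower bound is immediate since $\|\zeta\|$ alone already exceeds $(1-\varepsilon_{N,l})\sqrt{(N-l)^F-1}$, and the precise definition of $\varepsilon_{N,l}$ via $1 - \varepsilon_{N,l} = (1 - b_N \varepsilon_N)\sqrt{(N-1)/((N-l)-1)}$ is engineered exactly so that $(1-\varepsilon_{N,l})\sqrt{(N-l)^F - 1} \ge (1-\varepsilon_N)\sqrt{N^F-1}$ after accounting for the field-dimension factor; for the upper bound one uses $\|z_{l+1}\|^2 \le \|\zeta\|^2 + l\,T_{N,l}^2$ and the defining identity $T_{N,l}^2 = \frac{(N-l)-1}{l}((1+\varepsilon_N)^2 - (1+\varepsilon_{N,l})^2)$, which is precisely what makes $(1+\varepsilon_{N,l})^2((N-l)^F-1) + l\,T_{N,l}^2 \le (1+\varepsilon_N)^2(N^F-1)$. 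So (a) reduces to carefully checking those two algebraic identities against the $N^F$ versus $N$ bookkeeping — routine but needing attention to the $\dim_\R F$ factor.

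For (b), the key point is that $U z_m$ has vanishing components in the directions indexed by $m+1, \dots, N$, so only the top $l$ coordinates (really the first $m$) of $U z_m$ pair against $z_{l+1}$; since $\|z_{l+1}\| \ge (1-\varepsilon_{N,l})\sqrt{(N-l)^F-1}$ is large while the relevant overlap is bounded using Cauchy--Schwarz by $\|U z_m\| \cdot \|v\| \le \|U z_m\| \sqrt{l}\,T_{N,l}$, and $\|U z_m\|$ is of order $\sqrt{N^F-1}$, the normalized inner product is at most roughly $\frac{\sqrt{l}\,T_{N,l}}{\|z_{l+1}\|}$, which I would bound crudely and compare to $\theta(\varepsilon_N)$. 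The choice $\theta(\varepsilon) = \left(\frac{5\varepsilon^2(1+\varepsilon)}{(1-\varepsilon) + 5\varepsilon^2(1+\varepsilon)}\right)^{1/2}$ is exactly calibrated to absorb the constant ``$5$'' coming from these estimates together with the annulus bounds $(1\pm\varepsilon_N)$ and the relation $l\,T_{N,l}^2 \le$ (something like) $5\varepsilon_N^2(1+\varepsilon_N)(N-1)$ that follows from the second displayed form of $T_{N,l}^2$ plus $\varepsilon_{N,l} < \varepsilon_N$. The main obstacle, as usual in this kind of lemma, is getting every inequality to go the right way \emph{for every} $1 \le l \le n_N - 1$ simultaneously and \emph{for all} $N$ (not just asymptotically), so I would keep the estimates in exact algebraic form — using the closed expressions for $\varepsilon_{N,l}$ and $T_{N,l}$, the monotonicity $\varepsilon_{N,l} \le \varepsilon_{N,1} < \varepsilon_N$ from Lemma \ref{lem:limit}, and $n_N \le N-1$ — rather than passing to limits, and only invoke Lemma \ref{lem:limit} or Corollary \ref{cor:en} where a genuinely asymptotic conclusion is needed. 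Once both (a) and (b) are in hand, the containment is established by definition of $A^{N,F}_{\varepsilon_N,\theta}[\cdots]$.
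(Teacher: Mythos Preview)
Your plan is essentially the paper's own argument: write $z_{l+1}=(w,\zeta)$ with $w\in F^l$, $\zeta\in F^{N-l}$, verify the annulus condition on $\|z_{l+1}\|$ directly from the definitions of $\varepsilon_{N,l}$ and $T_{N,l}$, and verify the angle condition by observing that $\langle Uz_m,z_{l+1}\rangle$ only sees the top block $w$ and applying Cauchy--Schwarz. Part (a) goes through exactly as you outline.

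There is, however, a genuine gap in your justification of (b). You assert that ``$l\,T_{N,l}^2 \le$ (something like) $5\varepsilon_N^2(1+\varepsilon_N)(N-1)$ follows from the second displayed form of $T_{N,l}^2$ plus $\varepsilon_{N,l} < \varepsilon_N$.'' But from $lT_{N,l}^2 = ((N-l)-1)(\varepsilon_N-\varepsilon_{N,l})(\varepsilon_N+\varepsilon_{N,l}+2)$ and only $\varepsilon_{N,l}<\varepsilon_N$, the best you get is $lT_{N,l}^2 < 2\varepsilon_N(1+\varepsilon_N)(N-1)$, which is too large by a factor $\sim 1/\varepsilon_N$ to be dominated by $\theta(\varepsilon_N)^2\|z_{l+1}\|^2\sim 5\varepsilon_N^2(N-1)$. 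What is actually needed is the quadratic smallness $\varepsilon_N-\varepsilon_{N,l}=O(\varepsilon_N^2)$, and this is precisely where the standing hypothesis $n_N/(N-1)\le\varepsilon_N^2$ (recorded just before Lemma~\ref{lem:limit}) enters: one computes
\[
\frac{\varepsilon_N-\varepsilon_{N,l}}{1-\varepsilon_{N,l}}
=\Bigl(1-\sqrt{1-\tfrac{l}{N-1}}\Bigr)+\frac{\varepsilon_N^2}{1-b_N\varepsilon_N}\sqrt{1-\tfrac{l}{N-1}}
<\frac{n_N}{N-1}+\tfrac43\varepsilon_N^2\le\tfrac73\varepsilon_N^2,
\]
and then $\frac{lT_{N,l}^2}{(1-\varepsilon_{N,l})^2((N-l)-1)}<\frac{5\varepsilon_N^2(1+\varepsilon_N)}{1-\varepsilon_N}=\frac{\theta(\varepsilon_N)^2}{1-\theta(\varepsilon_N)^2}$. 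This gives $\|w\|^2<\frac{\theta^2}{1-\theta^2}\|\zeta\|^2$, which rearranges to $\|w\|<\theta(\varepsilon_N)\|z_{l+1}\|$ and yields the angle bound. So your plan is correct once you upgrade ``$\varepsilon_{N,l}<\varepsilon_N$'' to the explicit estimate above and invoke $n_N/(N-1)\le\varepsilon_N^2$ rather than merely $n_N\le N-1$.
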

\begin{proof}
Given any
\[
w:=(w_1,\ldots, w_l)\in \left(B^{F}(T_{N,l}) \right)^{l} \subset F^l, \quad
\zeta \in A^{(N-l)^{F}}_{\varepsilon_{N,l}} \subset F^{N-l},
\]
we prove $z_{l+1}:=(w,\zeta) \in A^{N^F }_{\varepsilon_N}$
by the following computations:
\begin{align*}
\|z_{l+1}\|^2
&=\|w\|^2+\|\zeta\|^2
< l (T_{N,l})^2+ (1+\varepsilon_{N,l})^2((N-l)^F -1) \\
& = \left\{ \frac{l (T_{N,l} )^2}{(N-l)^F -1}+ (1+\varepsilon_{N,l} )^2 \right\}((N-l)^F -1) \\
& \leq (1+\varepsilon_N  )^2 ((N-l)^F -1)
<(1+\varepsilon_N  )^2(N^F -1), \\
\|z_{l+1}\|
&\geq \|\zeta\|
> (1-\varepsilon_{N,l}) \sqrt{(N-l)^{F}-1}
=(1-b_n\varepsilon_N) \sqrt{\frac{N-1}{(N-l)-1}} \cdot \sqrt{(N-l)^{F}-1}\\
&\geq(1-b_n\varepsilon_N) \sqrt{N^F-1}
>(1-\varepsilon_N) \sqrt{N^F -1}.
\end{align*}
Using the fact that $n_N/(N-1) \leq \varepsilon_N^2 \leq 1$, 
we find for $1 \leq l \leq  n_N-1$ that 
\begin{align*}
\frac{\varepsilon_N-\varepsilon_{N,l}}{1-\varepsilon_{N,l}}
&=\frac{(1-b_N\varepsilon_N) \left(\sqrt{\frac{N-l}{(N-l)-1}}-1\right)+\varepsilon_N(1-b_N)}{(1-b_N\varepsilon_N) \sqrt{\frac{N-1}{(N-l)-1}}} \\ \notag
&=\left(1-\sqrt{1-\frac{l}{N-1}}\right)+\frac{\varepsilon_N(1-b_N)}{1-b_N\varepsilon_N} \sqrt{1-\frac{l}{N-1}}\\ 
&
<\frac{l}{N-1} +\frac{\varepsilon_N (1-b_N)}{1-b_N\varepsilon_N}
<\frac{n_N}{N-1}+\frac43\varepsilon_N(1-b_N)
\leq\frac73\varepsilon_N^2,\\ 
\frac{l T_{N,l}^2}{ (1-\varepsilon_{N,l} )^2 ((N-l) -1) } 
&=\frac{ (\varepsilon_N -\varepsilon_{N,l} )(\varepsilon_N +\varepsilon_{N,l} +2)}{(1-\varepsilon_{N,l} )^2}\\ 
&<  \frac{7\varepsilon_N^2( \varepsilon_N +\varepsilon_{N,l} +2)}{3(1-\varepsilon_{N,l} )}  
<  \frac{ 5\varepsilon_N^2(1+\varepsilon_N )}{1-\varepsilon_{N} }
= \frac{\theta(\varepsilon_N)^2}{1-\theta(\varepsilon_N)^2}.  
\end{align*}
This implies 
\begin{align*}
\|w\|^2
<l T_{N,l}^2
&<\frac{\theta(\varepsilon_N )^2}{1-\theta(\varepsilon_N)^2}   (1-\varepsilon_{N,l})^2( (N-l)-1) \\
&\leq 
\frac{\theta(\varepsilon_N )^2}{1-\theta(\varepsilon_N)^2}   (1-\varepsilon_{N,l})^2( (N-l)^F-1) 
<\frac{\theta(\varepsilon_N )^2}{1-\theta(\varepsilon_N )^2} \|\zeta\|^2,
\end{align*}
which leads to   
$\|w\|^2< \theta(\varepsilon_N)^2 (\|w\|^2+\|\zeta\|^2)=\theta(\varepsilon_N)^2 \|z_{l+1}\|^2$. 
By the Cauchy--Schwarz inequality, for any $1 \leq l \leq  n_N-1$,
\begin{align}\label{angle}
 \left\|\lr{\frac{Uz_m}{\|Uz_m\|}}{\frac{z_{l+1}}{\|z_{l+1}\|}}\right\|
=\left\|  \lr{\frac{Uz_m}{\|Uz_m\|}}{\frac{w}{\|z_{l+1}\|}}\right\|
\leq \frac{\|w\|}{\|z_{l+1}\|}< \theta(\varepsilon_N).
\end{align}
We thus obtain $z_{l+1} \in A^{N,F}_{\varepsilon_N,\theta}[Uz_1,\ldots,Uz_l]$.
This completes the proof.
\end{proof}
\begin{lem}\label{lem:full}
For $0 \leq l \leq  n_N-1$, we set 
\[
v_{N,l}^F:=
\begin{cases}
1-\gamma^{N^F} \left(A^{N^F}_{\varepsilon_N}\right) &\text{\ if $l=0$}, \\
1-\gamma^{N^F} \left( B^F(T_{N,l})^l\times A^{(N-l)^{F}}_{\varepsilon_{N,l}}\right) &\text{\ if $l\neq0$},
\end{cases}\quad
v_N^F:=\max\{v_{N,l}^F \ |\ 0 \leq l \leq n_N-1 \}. 
\]
We then have $\lim_{N \to \infty} n_N v_N^F =0$.
\end{lem}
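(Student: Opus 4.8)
Since $n_N v_N^F=\max_{0\le l\le n_N-1}n_N v_{N,l}^F$, it suffices to bound $n_N v_{N,l}^F$ by a quantity tending to $0$ uniformly in $l$. Put $R_N:=\varepsilon_N^2(N-1)/n_N$; by (the proof of) Lemma \ref{lem:limit} we have $R_N\to\infty$, and since $(\tfrac{N-1}{n_N})^{1-a}=R_N$ and $\sqrt{(N-1)/n_N^3}=R_N$, the reformulations of \eqref{eq:ass} and \eqref{eq:condi} recorded after Theorem \ref{thm:fullmeas} both read $\sup_N n_N^2 e^{-\frac{a'}4 R_N}=:C<\infty$, i.e.\ $2\log n_N\le\log C+\tfrac{a'}4 R_N$; I use this, together with $a'<1$ and $n_N\ge1$, throughout. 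For $l=0$: once $\varepsilon_N\sqrt{N^F-1}>t_0$ (true for large $N$ by Lemma \ref{lem:limit}), the estimate obtained inside the proof of Lemma \ref{lem:gauss}, together with $g_m(\sqrt{m-1})\le1$, gives $v_{N,0}^F=1-\gamma^{N^F}(A^{N^F}_{\varepsilon_N})\le 2e^{-(\varepsilon_N\sqrt{N^F-1})^2/2}\le 2e^{-\varepsilon_N^2(N-1)/2}\le 2e^{-R_N/2}$, whence $n_N v_{N,0}^F\le 2n_N e^{-R_N/2}\le 2\sqrt{C}\,e^{-(1/2-a'/8)R_N}\to0$.

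For $1\le l\le n_N-1$ (so $n_N\ge2$) I would factor $\gamma^{N^F}=\gamma^{l^F}\otimes\gamma^{(N-l)^F}$ along $F^N=F^l\times F^{N-l}$; since $1-xy\le(1-x)+(1-y)$ for $x,y\in[0,1]$,
\[
v_{N,l}^F\le\bigl(1-\gamma^{l^F}(B^F(T_{N,l})^l)\bigr)+\bigl(1-\gamma^{(N-l)^F}(A^{(N-l)^F}_{\varepsilon_{N,l}})\bigr).
\]
By Bernoulli's inequality the first summand equals $1-\bigl(1-\gamma^{1^F}(F\setminus B^F(T_{N,l}))\bigr)^l$ and is thus $\le l\,\gamma^{1^F}(F\setminus B^F(T_{N,l}))$, and a one–variable Gaussian tail computation (absorbing the polynomial prefactor occurring when $F=\H$) yields a threshold $T_*$ with $\gamma^{1^F}(F\setminus B^F(T))\le e^{-T^2/4}$ for $T\ge T_*$; the second summand is $\le 2e^{-(\varepsilon_{N,l}\sqrt{(N-l)^F-1})^2/2}$, again by the estimate from the proof of Lemma \ref{lem:gauss}. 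Hence $v_{N,l}^F\le l\,e^{-T_{N,l}^2/4}+2e^{-(\varepsilon_{N,l}\sqrt{(N-l)^F-1})^2/2}$.

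Next I would insert the uniform lower bounds. From \eqref{tn} and $n_N/(N-1)\le\varepsilon_N^2\to0$ one gets $T_{N,l}^2\ge 2R_N(1-\varepsilon_N^2)\ge R_N$ for all large $N$ and all $l$; from the proof of Corollary \ref{cor:en}, whose constant may be taken to be $\tfrac12$ rather than $a'$, one gets $\varepsilon_{N,l}\sqrt{(N-l)^F-1}\ge\tfrac12\varepsilon_N\sqrt{N-1}$, so $(\varepsilon_{N,l}\sqrt{(N-l)^F-1})^2\ge\tfrac14 n_N R_N\ge\tfrac14 R_N$. Moreover $T_{N,l}\to\infty$ and $\varepsilon_N\sqrt{N-1}\to\infty$ uniformly in $l$, so the thresholds $T\ge T_*$ and $\varepsilon_{N,l}\sqrt{(N-l)^F-1}>t_0$ eventually hold uniformly in $l$. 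Therefore, for all large $N$ and all $1\le l\le n_N-1$,
\[
n_N v_{N,l}^F\le n_N^2 e^{-R_N/4}+2n_N e^{-R_N/8}\le C\,e^{-\frac{1-a'}4 R_N}+2\sqrt{C}\,e^{-\frac{1-a'}8 R_N}\xrightarrow{\ N\to\infty\ }0,
\]
using $2\log n_N\le\log C+\tfrac{a'}4 R_N$, $n_N\ge1$, $R_N\to\infty$ and $a'<1$. Together with the $l=0$ case this gives $n_N v_N^F\to0$.

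The step I expect to cost the most care is matching the exponential rates: the decay produced by $T_{N,l}$ and by $\varepsilon_{N,l}$ must strictly beat the rate $\tfrac{a'}4 R_N$ furnished by the hypothesis, and must do so \emph{uniformly} in $l$. This is precisely why one needs the $l$–uniform limits of Lemma \ref{lem:limit} and the $l$–uniform estimate of Corollary \ref{cor:en} (with its constant taken to be $\tfrac12$, hence larger than $\tfrac{\sqrt{a'}}2$, rather than $a'$) rather than pointwise versions, and why one must use $\varepsilon_N^2(N-1)=n_N R_N\ge R_N$ coming from $n_N\ge1$ to absorb the leading factor $n_N$ in front of $v_{N,l}^F$. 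The residual elementary estimates — the value of $T_*$ and the prefactor absorption for $F=\H$, and $\tfrac{n_N}{n_N-1}\cdot\tfrac{N-n_N}{N-1}\ge 1-\varepsilon_N^2$ — are routine and I omit them here.
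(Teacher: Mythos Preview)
Your proof is correct and follows essentially the same strategy as the paper's: split $v_{N,l}^F$ into the annulus tail $1-\gamma^{(N-l)^F}(A^{(N-l)^F}_{\varepsilon_{N,l}})$ and the ball tail $1-\gamma^{l^F}(B^F(T_{N,l})^l)$, bound each by a Gaussian tail estimate coming from the proof of Lemma \ref{lem:gauss} and a one--variable computation, and then use the hypothesis \eqref{eq:ass}/\eqref{eq:condi} to absorb the leading factors $n_N$ and $n_N^2$. Your organisation is somewhat cleaner than the paper's: you introduce the single parameter $R_N=\varepsilon_N^2(N-1)/n_N$, observe that in both regimes the hypothesis reads $n_N^2\le C\,e^{a'R_N/4}$, and run everything off $R_N\to\infty$, thereby avoiding the paper's case split into $\sup_N n_N<\infty$ versus $n_N\to\infty$. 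You are also right to sharpen the constant in Corollary \ref{cor:en} from $a'$ to $\tfrac12$ (legitimate since the proof there shows the ratio tends to $1$): with only $a'$ the exponent $a'R_N/8-a'^2R_N/2$ need not be negative when $a'\le 1/4$, which is exactly why the paper compensates by using $a'^2 n_N$ in the exponent and splitting cases. These are presentational differences; the underlying ideas coincide.
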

\begin{proof}
Putting
\[
\alpha_{N,l}^F:=
\begin{cases}
\gamma^{N^F}(A_{\varepsilon_N}^{N^F}) &\text{if $l = 0$},\\
\gamma^{(N-l)^{F}}(A^{(N-l)^{F}}_{\varepsilon_{N,l}}) &\text{if $l \neq 0$},
\end{cases}
\qquad
\beta_{N,l}^F:=
\begin{cases}
0  &\text{\ if $l=0$}, \\
1-\gamma^{1^F}( B^{F}(T_{N,l}) ) &\text{\ if $l\neq0$},
\end{cases}
\]
we see 
\[
v_{N,l}^F=(1-\alpha_{N,l}^F) \left(1-\beta_{N,l}^F\right)^l+ 1-\left( 1-\beta_{N,l}^F\right)^l
\leq (1-\alpha_{N,l}^F)+l \beta_{N,l}^F 
\]
and it suffices to prove 
\[
\lim_{N \to \infty} n_N (1-\alpha_{N,l}^F) =0, 
\quad
\lim_{N \to \infty} n_N^2 \cdot \beta_{N,l}^F =0
\]
for any $ 0\leq l \leq  n_N-1$.
Since we have 
$\lim_{N \to \infty} \alpha_{N,l}^F=1, \lim_{N \to \infty} \beta_{N,l}^F=0$
by Lemmas \ref{lem:gauss} and \ref{lem:limit}, 
the lemma holds true if $\sup_N n_N <\infty$.

We consider the case where $\lim_{N \to \infty} n_N=\infty$.
Set $m:=(N-l)^F$.
By \eqref{eq:gau} and the proof of Lemma \ref{lem:gauss},
we have
\begin{align*}
1-\alpha_{N,l}^F
&
\leq
 g_{m}(\sqrt{m-1})\cdot 2 e^{-(\varepsilon_{N,l} \sqrt{m-1})^2/2}.
\end{align*}
For large enough $N$, Corollary \ref{cor:en} implies 
\[
(\varepsilon_{N,l}  \sqrt{m-1})^2
\geq 
a'^2 \cdot \varepsilon_{N}^2 \cdot( N-1)
=
a'^2 (N-1)^{-2a_N+1}
\geq a'^2 n_N
\]
and $m=(N-l)^F \to \infty$ as $N \to \infty$, 
so that we have 
\begin{align*}
n_N(1-\alpha_{N,l}^F)
&
\leq
\left\{
g_{m}(\sqrt{m-1}) \right\}
\cdot 2 (n_N e^{-a'^2n_N})
\xrightarrow{N \to \infty}
0. 
\end{align*}

Let us show $\lim_{N \to \infty} n_N^2 \beta_{N,l}^F =0$.
Assume that $N$ is so large that 
\[
\frac{N-n_{N}}{n_{N}-1}
\geq a' \cdot \frac{N-1}{n_N}.
\]
If we set
\[
 R_N:=
\sqrt{\frac{a'(N-1)}{2n_N}}\varepsilon_N
=\sqrt{\frac{a'}{2}}(N-1)^{(1-p_N-2a_N)/2}, 
\]
which diverges to infinity as $N \to \infty$,
then we observe from \eqref{tn} that 
\begin{align*}
\frac{T_{N,l}^2}{4}
\geq \frac14 \cdot2 \varepsilon_N^2 \cdot \frac{N-n_{N}}{n_{N}-1}
\geq R_N^2.
\end{align*}
Hence we conclude that 
\begin{align*}
B^F(T_{N,l}) \supset 
(B^\R(T_{N,l}/2))^{\dim_\R F}
\supset (B^\R(R_N))^{\dim_\R F},
\end{align*}
which implies that for $\beta:=\gamma^1(B^\R(R_N)) \in[0,1]$,
\[
\beta_{N,l}^F 
\leq 
1-\left(\gamma^1(B^\R(R_N))\right)^{\dim_\R F}
\leq 
1-\beta^4
=(1+\beta^2)(1+\beta)(1-\beta) 
\leq 4(1-\gamma^1(B^\R(R_N))), 
\]
\begin{align*}
1-\gamma^1(B^\R(R_N))
&=
\sqrt{\frac{2}{\pi}} \int_{R_N}^\infty e^{-r^2/2} dr
=\frac{1}{\sqrt{\pi}} \int_{R_N^2/2}^\infty  e^{-t} \frac{1}{\sqrt{t}} dt \\
&=-\frac{1}{\sqrt{\pi}} e^{-t} \frac{1}{\sqrt{t}}\bigg|_{t=R_N^2/2}^{t=\infty}
+\frac{1}{\sqrt{\pi}} \int_{R_N^2/2}^\infty e^{-t}  \frac{-1}{2t\sqrt{t}}
\leq \sqrt{\frac{2}{\pi}}\frac1{R_N}e^{-R_N^2/2}. 
%
\end{align*} 
If the assumption \eqref{eq:ass} holds, 
then we see 
\[
n_N^2 \cdot \beta_{N,l}^F
\leq 
4n_N^2 (1-\gamma^1(B^\R(R_N)))
\leq 
4 \sqrt{\frac{2}{\pi}} \frac{1}{R_N} n_N^2 \exp\left(-\frac{a'}{4}\left(\frac{N-1}{n_{N }}\right)^{1-a}\right)
\xrightarrow{N \to \infty}
0.
\]
Under the assumption \eqref{eq:condi}, we find that
\[
n_N^2 \cdot \beta_{N,l}^F
\leq 
4n_N^2 (1-\gamma^1(B^\R(R_N)))
\leq 
4 \sqrt{\frac{2}{\pi}} \frac{1}{R_N} n_N^2 \exp\left(-\frac{a'}{4}\sqrt{\frac{N-1}{n_{N }^3}}\right)
\xrightarrow{N \to \infty}
0.
\]
This completes the proof.
\end{proof}

\begin{proof}[Proof of Theorem \ref{thm:fullmeas}]
We already observe $\lim_{N \to \infty} \varepsilon_N=0$ in Lemma \ref{lem:limit}. 
We apply Lemma \ref{lem:subset} to have 
\begin{align*}
&\gamma^{N^F n_N}(X_{N,n_N,\varepsilon_N ,\theta}^{F})\\
=&
\int_{z_1 \in A^{N^F }_{\varepsilon_N }}\cdots \int_{z_{n_N-1} \in A^{N,F}_{\varepsilon_N }[z_1,\ldots,z_{n_N-2}]} 
\gamma^{N^F } (A^{N,F}_{\varepsilon_N ,\theta}[z_1,\ldots,z_{n_N-1}]) d\gamma^{N^F }(z_{n_N-1}) \cdots  d\gamma^{N^F }(z_1) 
\\
\geq &
\int_{z_1 \in A^{N^F }_{\varepsilon_N }}\cdots \int_{z_{n_N-2} \in A^{N,F}_{\varepsilon_N ,\theta}[z_1,\ldots,z_{n_N-3}]} \
 \gamma^{N^F } (A^{n,F}_{\varepsilon_N ,\theta}[z_1,\ldots, z_{n_N-2}])
 d\gamma^{N^F }(z_{n_N-2}) \cdots d\gamma^{N^F }(z_1) \\
&\quad \times
 \left( \gamma^{1^F}( B^{F}(T_{N,n_N-1}^{F}) )\right)^{n_N-1}\times \gamma^{(N-(n_N-1))^{F}}(A^{(N -(n_N-1))^{F}}_{\varepsilon_{N,n_N-1}})\\
\geq&
\gamma^n(A^{N^F }_{\varepsilon_N }) \times
\prod_{l=1}^{n_N-1}\left\{ \left( \gamma^{1^F}( B^{F}(T_{N,l}^{F}) ) \right)^l\times 
                               \gamma^{(N-l)^{F}}(A^{(N-l)^{F}}_{\varepsilon_{N,l}})  \right\}
=
\prod_{l=0}^{n_N-1} (1-v_{N,l}^F) \geq (1-v_N^F)^{n_N}.                               
\end{align*}
By Lemma \ref{lem:full},
we have 
$\lim_{N \to\infty} (1-v_N^F)^{n_N}=1$.
This completes the proof of the theorem.
\end{proof}
\begin{rem}\label{rem:angle}
If we choose a function $\theta :(0,\infty) \to (0,\infty)$ satisfying 
\[
\lim_{N \to \infty} \gamma^{N^F n_N} (X^F_{N,n_N,\varepsilon_{N}, \theta})=1
\]
with the use of Lemma \ref{lem:subset}, 
then by \eqref{angle},
the angle $\theta$ is required to satisfy
\begin{align*}
\theta(\varepsilon_N )^2 
&> \sup\left\{  \frac{\|w\|^2}{\|(w,\zeta)\|^2}\ \Bigg|\ 
w\in \left(B^{F}(T_{N,l})\right)^{l},  
\zeta \in A^{(N-l)^{F}}_{\varepsilon_{N,l} }
\right\}\\
&=\frac{lT_{N,l}^2}{l T_{N,l}^2+ (1-\varepsilon_{N,l} )^2((N-l)^F-1)}
=\left(1+\frac{(1-\varepsilon_{N,l} )^2((N-l)^F-1)}{lT_{N,l}^2}\right)^{-1}.
\end{align*}
\end{rem}
\begin{rem}\label{rem:condi}
Let us comment on the conditions \eqref{eq:ass} and \eqref{eq:condi}.
\begin{enumerate}[(a)]
\item\label{c1}
If \eqref{eq:ass} (resp.\ \eqref{eq:condi}) holds, 
we then have $\lim_{N \to \infty}{n_N}/N^{p_\ast}=0$ 
for $p_\ast=1$ (resp.\ $p_\ast=1/3$),  
however the converse does not hold in general.
Such an example is 
\[
n_N:=\left[\left(\frac{N}{\log N}\right)^{p_\ast} +1 \right],
\]
where $[x]$  is the largest integer not greater than $x$.\\
\item \label{c3}
For $p>0$, 
we see that the condition $n_N=O(N^p)$
implies $\lim_{N \to \infty} p_N \leq p$.
If moreover $p<1$, then \eqref{eq:ass} is true,
because there exists $c>0$ such that 
 \[
\sup_{N}\left( 2\log n_N -\frac{a'}{4} \left(\frac{N}{n_N}\right)^{1-a} \right)
\leq 
\sup_{N}\left(2p\log N +2  \log c-\frac{a'}{4} \cdot c^{a-1} N^{(1-p)(1-a)} \right)<\infty
\]
holds for any $a,a' \in (0,1)$.
Similarly, if $p<1/3$, then \eqref{eq:condi} is true. 
\item
If $n_N=o(N (\log N)^{-A})$ for some $A>1$ ,
then \eqref{eq:ass} holds true with $a=(A-1)/A$ and any $a'\in (0,1)$, 
because there exists $N_0 \in \N$ such that 
\[
\frac{n_{N}}{N} (\log N)^A \leq \left(\frac{a'}{4} \times \frac12\right)^{A}
\]
for any $N \geq N_0$, implying
 \[
\sup_{N \geq N_0}\left( 2\log n_N -\frac{a'}{4} \left(\frac{N}{n_N}\right)^{1-a} \right)
\leq 
2\sup_{N\geq N_0}\left( \log N- A\log \log N + A \log\frac{a'}{8}  -\log N \right)<\infty.
\]
It is impossible to reduce the condition $A>1$ to $A=1$, namely $n_N=o(N (\log N)^{-1})$.
For example, if we put
\[
n_N:=\left[\frac{N}{\log N \log \log N} +1 \right],
\]
then this satisfies $n_N=o(N (\log N)^{-1})$,
but does not satisfy \eqref{eq:ass}. 
\item
We similarly derive \eqref{eq:condi} from $n_N=o(N^{1/3} (\log N)^{-2/3} )$.
\end{enumerate}
\end{rem}
\section{Proof of main theorems} 
\subsection{Strategy of the proof}
The idea of the proof of Theorems \ref{thm:Stiefel} and \ref{thm:Gr-pS}
is based on that in the case of $n=1$ in \cite{Sy:mmlim}.
The following lemma plays a crucial role in the proof.
\begin{lem}\label{keylem}
Let $\{X_N\}_{N=1}^\infty, \{Y_N\}_{N=1}^\infty$ be sequences of mm-spaces satisfying the following  conditions:
\begin{enumerate}[{\rm ({C}1)}]
\item\label{ass1}
$\cP_{Y_N}$ converges weakly to a pyramid $\cP_\infty$ as $N \to \infty$.
\item\label{ass2}
For any $N, l \in \N$ with $N \geq l$, there exists a $1$-Lipschitz map 
$p^N_l: X_N \to Y_l$ such that 
$\lim_{N \to \infty} \dP((p^N_l)_\# \mu_{X_N},\mu_{Y_l})=0$.
\item\label{ass3}
For any $N \in \N$, 
there exists a subset $Y_N' \subset Y_N$ such that 
\begin{enumerate}[$($a$)$]
\item\label{ass31}
$\lim_{N \to \infty}\mu_{Y_N}(Y_N')=1$,
\item\label{ass32}
there exists a Lipschitz map 
$\Phi^N: Y_N' \to X_N$ 
pushing $\mu_{Y'_N}:=(\mu_{Y_N}(Y'_N))^{-1}\mu_{Y_N}|_{Y'_N}$
forward to $\mu_{X_N}$ such that
its smallest Lipschitz constant tends to $1$ as $N \to \infty$.
\end{enumerate}
\end{enumerate}
Then $\cP_{X_N}$ converges weakly to $\cP_\infty$ as $N \to \infty$.
\end{lem}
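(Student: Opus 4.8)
The plan is to verify directly the two conditions of Definition \ref{defn:w-conv} for the weak convergence $\cP_{X_N}\to\cP_\infty$: condition (1) (every $Z\in\cP_\infty$ is $\square$-approximated by $\cP_{X_N}$) will come from (C1) together with (C2), and condition (2) (no $Z\notin\cP_\infty$ is so approximated) from (C1) together with (C3). The tools I will use repeatedly are the equivalence of $X\prec Y$ with $\cP_X\subset\cP_Y$ (and of $\cP_{tX}$ with $t\cP_X$), the elementary bounds $\square((Z,\mu),(Z,\nu))\le 2\dP(\mu,\nu)\le 2\dTV(\mu,\nu)$ from Propositions \ref{prop:box-dP} and \ref{dpdtv}, the $1$-Lipschitz continuity of $\iota:(\cX,\dconc)\to(\Pi,\rho)$ together with $\dconc\le\square$ from Theorem \ref{thm:emb} and Proposition \ref{prop:dconc-box}, Lemma \ref{lem:obvious}(1), and the elementary fact that a $1$-Lipschitz pushforward does not increase the Prohorov distance.

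For condition (1), I would fix $Z\in\cP_\infty$ and $\delta>0$. By (C1) and Definition \ref{defn:w-conv}(1) there are an index $l$ and an mm-space $Z'\in\cP_{Y_l}$ with $\square(Z,Z')<\delta/2$; choose a $1$-Lipschitz map $f:Y_l\to Z'$ with $f_\#\mu_{Y_l}=\mu_{Z'}$. For $N\ge l$ the composition $f\circ p^N_l:X_N\to Z'$ is $1$-Lipschitz, so $Z'_N:=(Z',(f\circ p^N_l)_\#\mu_{X_N})\in\cP_{X_N}$, and $\square(Z'_N,Z')\le 2\dP((f\circ p^N_l)_\#\mu_{X_N},f_\#\mu_{Y_l})\le 2\dP((p^N_l)_\#\mu_{X_N},\mu_{Y_l})\to 0$ by (C2). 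Hence $\square(Z,\cP_{X_N})\le\square(Z,Z')+\square(Z',Z'_N)<\delta$ for all large $N$, and since $\delta$ is arbitrary, $\lim_{N\to\infty}\square(Z,\cP_{X_N})=0$.

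For condition (2), let $L_N$ be the smallest Lipschitz constant of $\Phi^N$, so that $L_N\to 1$ by (C3)(b); set $\mu'_N:=\mu_{Y_N}(Y'_N)^{-1}\mu_{Y_N}|_{Y'_N}$ and $Y''_N:=(\supp\mu'_N,d_{Y_N},\mu'_N)$. Viewing $\Phi^N$ as a map $L_N Y''_N\to X_N$ it is $1$-Lipschitz and pushes $\mu'_N$ forward to $\mu_{X_N}$, so $X_N\prec L_N Y''_N$, that is, $\cP_{X_N}\subset L_N\cP_{Y''_N}$. On the other hand $\dTV(\mu'_N,\mu_{Y_N})=1-\mu_{Y_N}(Y'_N)\to 0$ by (C3)(a), whence $\square(Y''_N,Y_N)\le 2\dTV(\mu'_N,\mu_{Y_N})\to 0$ and so $\rho(\cP_{Y''_N},\cP_{Y_N})\to 0$; combined with (C1) this shows that $\cP_{Y''_N}$ converges weakly to $\cP_\infty$, and then Lemma \ref{lem:obvious}(1) gives that $L_N\cP_{Y''_N}$ converges weakly to $1\cdot\cP_\infty=\cP_\infty$. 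Consequently, for any $Z\in\cX\setminus\cP_\infty$, Definition \ref{defn:w-conv}(2) yields $\liminf_{N\to\infty}\square(Z,\cP_{X_N})\ge\liminf_{N\to\infty}\square(Z,L_N\cP_{Y''_N})>0$. Together with the previous paragraph this establishes the weak convergence $\cP_{X_N}\to\cP_\infty$.

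I expect the only delicate point to be the rescaling by $L_N$ in condition (2): one cannot simply replace $L_N Y''_N$ by $Y''_N$, since $\square(tY,Y)$ need not be small when $\diam Y$ is large, so the convergence $L_N\to 1$ must be absorbed at the level of pyramids through Lemma \ref{lem:obvious}(1) rather than at the level of mm-spaces. A minor care point is the passage from $\mu_{Y_N}|_{Y'_N}$ to an honest mm-space $Y''_N$ via its support, but this changes no $\square$-distance. Everything else is bookkeeping with the inequalities relating $\square$, $\dP$, and $\dTV$ and with the definition of weak convergence of pyramids.
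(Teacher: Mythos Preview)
Your proof is correct and uses the same substantive ideas as the paper: (C2) yields the inclusion $\cP_\infty\subset\liminf\cP_{X_N}$, and (C3) together with the total-variation estimate and Lemma~\ref{lem:obvious}(1) yields $\limsup\cP_{X_N}\subset\cP_\infty$ via $\cP_{X_N}\subset L_N\cP_{Y''_N}$. The only organizational difference is that the paper passes through compactness of $\Pi$ (Theorem~\ref{thm:emb}\eqref{cpt}), extracting a weakly convergent subsequence $\cP_{X_{N_m}}\to\cP$ and showing $\cP=\cP_\infty$, whereas you verify the two clauses of Definition~\ref{defn:w-conv} directly; both routes are equally short, and your detailed treatment of condition~(1) in fact spells out what the paper leaves as a one-line remark.
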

\begin{proof}
Theorem \ref{thm:emb}\eqref{cpt} ensures the existence of a subsequence $\{X_{N_m}\}_{m=1}^\infty \subset \{X_N\}_{N=1}^\infty$ such that 
$\cP_{X_{N_m}}$ converges weakly to a pyramid $\cP$ as $m \to \infty$.
It suffices to prove $\cP=\cP_{\infty}$ for any such $\cP$.
We deduce (C\ref{ass2}) from $\cP_\infty \subset \cP$.

To show the converse including relation $\cP\subset \cP_\infty$,
let us regard $Y_N'=(Y_N', d_{Y_N}, \mu_{Y_N'})$ as an mm-space. 
By (C\ref{ass31}), we compute
\[
\lim_{N \to \infty} \dTV( \mu_{Y'_N},  \mu_{Y_N} )
=\lim_{N \to \infty}\frac{1}{2} \int_{Y_N}\left| \frac{\mathbf{1}_{Y'_N}}{\mu_{Y_N}(Y_N')}-1\right| d\mu_{Y_N}
=\lim_{N \to \infty}(1-\mu_{Y_N}(Y_N'))=0,
\]
where $\mathbf{1}_{Y'_N}$ stands for the  indicator function of $Y_N'$. 
This with Corollary \ref{cor:rho-dTV} implies that 
the pyramid associated with $Y_N'$ converges weakly to $\cP_\infty$.
Combining this with (C\ref{ass32}) and Lemma \ref{lem:obvious}(1)
leads to $\cP\subset\cP_\infty$.
\end{proof}
\begin{cor}\label{keycor}
Let $\{X_N\}_{N=1}^\infty, \{Y_N\}_{N=1}^\infty$ be two sequences of mm-spaces 
as in Lemma \ref{keylem}.
Let $G$ be a group acting  isometrically on $X_N, Y_N$ satisfying the following conditions.
\begin{enumerate}[{\rm ({C'}1)}]
\item\label{asss1}
The quotient space $\bar{Y}_N$ of $Y_N$ by the $G$-action
is monotone increasing in $N$ with respect to the Lipschitz order.
\item\label{asss2}
$p^N_l, \Phi^N$ are both $G$-equivariant.
\item\label{asss3}
$Y'_N$ is $G$-invariant {\rm(}i.e., $G \cdot Y'_N = Y'_N${\rm)}.
\end{enumerate}
Then, the pyramids associated with $\bar{X}_N$, $\bar{Y}_N$ both converge weakly
to a common pyramid as $N\to\infty$.
\end{cor}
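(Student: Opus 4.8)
The plan is to deduce this corollary directly from Lemma \ref{keylem}, applied to the quotient sequences $\{\bar X_N\}_{N=1}^\infty$ and $\{\bar Y_N\}_{N=1}^\infty$. Thus it suffices to check that these sequences satisfy the hypotheses (C1), (C2), (C3) of that lemma: once this is done, Lemma \ref{keylem} produces a pyramid $\cP_\infty$ with $\cP_{\bar X_N}$ converging weakly to $\cP_\infty$, while (C1) for $\{\bar Y_N\}$ simultaneously exhibits $\cP_{\bar Y_N}$ as converging weakly to the same $\cP_\infty$, which is exactly the assertion.

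For (C1): by (C$'$1) the sequence $\{\bar Y_N\}$ is monotone increasing for the Lipschitz order, so Lemma \ref{lem:obvious}(2) gives that $\cP_{\bar Y_N}$ converges weakly to the $\Box$-closure $\cP_\infty$ of $\bigcup_N \cP_{\bar Y_N}$. For (C2): the quotient map $\bar p^N_l : \bar X_N \to \bar Y_l$ exists by (C$'$2) and is $1$-Lipschitz by Lemma \ref{lem:equivdomi}. Writing $\pi_X$, $\pi_Y$ for the projections onto the quotients, the identity $\bar p^N_l\circ\pi_X = \pi_Y\circ p^N_l$ gives $(\bar p^N_l)_\#\mu_{\bar X_N} = \overline{(p^N_l)_\#\mu_{X_N}}$; recalling also $\mu_{\bar Y_l}=\overline{\mu_{Y_l}}$, Lemma \ref{lem:dP-quotient} yields
\[
\dP\bigl((\bar p^N_l)_\#\mu_{\bar X_N},\,\mu_{\bar Y_l}\bigr)
=\dP\bigl(\overline{(p^N_l)_\#\mu_{X_N}},\,\overline{\mu_{Y_l}}\bigr)
\le \dP\bigl((p^N_l)_\#\mu_{X_N},\,\mu_{Y_l}\bigr)\xrightarrow{\ N\to\infty\ }0 ,
\]
which is (C2) for $\{\bar X_N\}$, $\{\bar Y_N\}$.

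For (C3) I take $\bar Y'_N := \pi_Y(Y'_N)\subset\bar Y_N$. Since $Y'_N$ is $G$-invariant by (C$'$3), one has $\pi_Y^{-1}(\bar Y'_N)=Y'_N$, hence $\mu_{\bar Y_N}(\bar Y'_N)=\mu_{Y_N}(Y'_N)\to 1$, which is (C3)(a). The same $G$-invariance shows that the mm-space $\bar Y'_N$, equipped with the restriction of $d_{\bar Y_N}$ and the normalized restriction of $\mu_{\bar Y_N}$, is exactly the $G$-quotient of the mm-space $Y'_N$ taken with the restricted metric and normalized restricted measure (as $Y'_N$ is regarded in Lemma \ref{keylem}); in particular $\mu_{\bar Y'_N}=\overline{\mu_{Y'_N}}$. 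Therefore the quotient map $\bar\Phi^N:\bar Y'_N\to\bar X_N$ of $\Phi^N$ (available by (C$'$2)) pushes $\mu_{\bar Y'_N}$ forward to $\mu_{\bar X_N}$, and by Lemma \ref{lem:equivdomi} it is Lipschitz with constant no larger than that of $\Phi^N$, so its smallest Lipschitz constant tends to $1$; this is (C3)(b). Invoking Lemma \ref{keylem} then completes the proof.

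Apart from routine diagram chases — that passing to $G$-quotients commutes with push-forward of measures, with composition of equivariant maps, and (via Lemma \ref{lem:equivdomi}) with the Lipschitz estimates — the one substantive point is the identification used in (C3): the mm-space $Y'_N$ occurring in Lemma \ref{keylem} carries the \emph{subspace} metric and the normalized \emph{restricted} measure, and one must verify that its $G$-quotient sits inside $\bar Y_N$ as the expected mm-subspace $\bar Y'_N=\pi_Y(Y'_N)$. This is precisely where the $G$-invariance hypothesis (C$'$3) is essential: without it, neither $\pi_Y^{-1}(\bar Y'_N)=Y'_N$ nor the agreement of the quotient metric of $(Y'_N,d_{Y_N})$ with the restriction of $d_{\bar Y_N}$ to $\bar Y'_N$ need hold, and the transfer of (C3) breaks down. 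I expect this bookkeeping to be the only place requiring care.
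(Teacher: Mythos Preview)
Your proposal is correct and follows essentially the same approach as the paper: both verify (C1)--(C3) of Lemma~\ref{keylem} for the quotient sequences $\{\bar X_N\}$, $\{\bar Y_N\}$ by invoking Lemma~\ref{lem:obvious}(2), Lemma~\ref{lem:equivdomi}, and Lemma~\ref{lem:dP-quotient} together with (C$'$1)--(C$'$3). Your write-up is in fact more explicit than the paper's about the identification $\pi_Y^{-1}(\bar Y'_N)=Y'_N$ and the role of (C$'$3) therein.
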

\begin{proof}
Due to Lemma \ref{lem:obvious}(2) with (C'\ref{asss1}), 
$\{\bar{Y}_N\}_N$ satisfies (C\ref{ass1}).
Combining (C'\ref{asss2}) with Lemma \ref{lem:dP-quotient} yields that 
$(\bar{p}^N_l)_\# \bar{\mu}_{X_N}$ converges weakly to  $\bar{\mu}_{Y_l}$.
Lemma \ref{lem:equivdomi} with (C'\ref{asss2})  
ensures that 
the Lipschitz constant of 
$\bar{p}^N_l$ is $1$ and that of $\bar{\Phi}^N$ tends to $1$.
Combining (C'\ref{asss2}) with (C'\ref{asss3}) leads to $\bar{\Phi}^N_\# \bar{\mu}_{Y'_N}=\bar{\mu}_{X_N}$.
By (C'\ref{asss3}), we have $\bar{\mu}_{Y_N}(\bar{Y'}_N)=\mu_{Y_N}(Y'_N)$.
Thus $\{\bar{X}_N\}_N,\{\bar{Y}_N\}_N$
satisfies all the conditions (C\ref{ass1}--\ref{ass3}),
which completes the proof of the corollary.
\end{proof}
Subsection \ref{ssc:lip} is devoted to construct a Lipschitz map 
$\Phi^{N,n,F}_{\varepsilon,\theta}:X^F_{N,n,\varepsilon,\theta} \to X^F_{N,n}$ with $\theta$ defined in \eqref{theta}, 
for which the smallest Lipschitz constant tends to $1$ as  $N \to \infty$,
with the help of the polar decomposition.
In Subsection \ref{ssc:push},  we demonstrate that the normalized push-forward measure
of $\gamma^{N^F  n}|_{X^F_{N,n}, \varepsilon,\theta}$
by $\Phi^{N,n,F}_{\varepsilon,\theta}$ coincides with $\mu^{N,n,F}$.
We finally apply Lemma \ref{keylem} and Corollary \ref{keycor}
to prove Theorems \ref{thm:Stiefel} and \ref{thm:Gr-pS}
for 
\[
X_N=X_{N,n_N}^F, \quad
Y_N=\Gamma^{N^F n}, \quad
Y_N'=X_{N,n_N,\varepsilon_N,\theta}^F, \quad
p^N_l=\pi^N_l(n),\quad
\Phi^N=\Phi^{N,n,F}_{\varepsilon,\theta}.
\]
\subsection{Lipschitz map from $X_{N,n,\varepsilon, \theta}^F$ to  $X_{N,n}^F$}\label{ssc:lip}
\subsubsection{Nearest point projection}
Let us first 
describe the relation between singular values and polar decompositions
as well as in the case of $F=\C$ proved by Li \cite{Li}.
In the case of $F=\H$, 
we should take account of the fact that $\tr(ZW) \neq\tr(WZ)$ may happen.
For $N \geq n$, set
\[
I^N_n:=
\begin{pmatrix}
I_n \\
0_{N-n,n}
\end{pmatrix}
\in M_{N,n}^F.
\]
\begin{lem}[cf.~\cite{Li}*{Lemma 1, Theorems 2,\ 2A}] \label{lemma:Li}
For any $Z_1, Z_2 \in M_{N,n}^F$, 
let 
\[
Z_1=Q_1 H_1=U_1 \Lambda_1 V_1^\ast,\quad
Z_2=Q_2 H_2=U_2 \Lambda_2 V_2^\ast
\]
be their polar and singular value decompositions.  
We then have
\[
\|Z_1-Z_2\| \geq \lambda \|Q_1-Q_2\|, \quad \lambda:=\min\{\lambda_n(Z_1), \lambda_n(Z_2)\} .
\]
\end{lem}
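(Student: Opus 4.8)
The plan is to reduce the inequality to a single expansion of the squared Frobenius norm. If $\lambda=0$ there is nothing to prove, so I assume $\lambda>0$. By Theorem~\ref{thm:decomp} the eigenvalues of the Hermitian factor $H_i$ of $Z_i$ are exactly the singular values $\lambda_1(Z_i)\ge\cdots\ge\lambda_n(Z_i)$, so the smallest eigenvalue of $H_i$ is $\lambda_n(Z_i)\ge\lambda$, and hence $K_i:=H_i-\lambda I_n$ is Hermitian and positive semidefinite. I would then write $Z_1-Z_2=(Q_1K_1-Q_2K_2)+\lambda(Q_1-Q_2)$ and expand, using $\|S+\lambda T\|^2=\|S\|^2+\lambda^2\|T\|^2+2\lambda\,\Re\lr{S}{T}$ (valid for real $\lambda$ since $\Re\lr{\cdot}{\cdot}$ is an $\R$-inner product):
\[
\|Z_1-Z_2\|^2=\|Q_1K_1-Q_2K_2\|^2+\lambda^2\|Q_1-Q_2\|^2+2\lambda\,\Re\lr{Q_1K_1-Q_2K_2}{Q_1-Q_2}.
\]
Since $\lambda>0$ and the first term is nonnegative, it suffices to prove that $\Re\lr{Q_1K_1-Q_2K_2}{Q_1-Q_2}\ge 0$.

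To handle this inner product I would use $Q_i^\ast Q_i=I_n$ and the identity $\Re\tr(AB)=\Re\tr(BA)$, which holds over $\R,\C,\H$ alike because $\Re(xy)=\Re(yx)$ for all $x,y\in F$; together with $K_i^\ast=K_i$, $\Re\tr M=\Re\tr M^\ast$, and $(K_2W^\ast)^\ast=WK_2$ where $W:=Q_1^\ast Q_2\in M_n^F$, a short computation collapses the inner product to
\[
\Re\lr{Q_1K_1-Q_2K_2}{Q_1-Q_2}=\tr K_1+\tr K_2-\Re\tr(K_1W)-\Re\tr(K_2W).
\]
So everything comes down to the single inequality $\Re\tr(KW)\le\tr K$ for Hermitian positive semidefinite $K\in M_n^F$.

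For that inequality I would first note that $W$ is a contraction: for $x\in F^n$ one has $\|Wx\|=\|Q_1^\ast(Q_2x)\|\le\|Q_2x\|=\|x\|$, since $Q_2$ is an isometry (because $Q_2^\ast Q_2=I_n$) and $Q_1Q_1^\ast$ is an orthogonal projection on $F^N$ (Hermitian and idempotent, as $Q_1^\ast Q_1=I_n$), so $Q_1^\ast$ does not increase norm. Then I would diagonalize $K=\sum_j\mu_j v_jv_j^\ast$ with $\mu_j\ge 0$ and orthonormal $v_j$, using the eigen decomposition of Hermitian matrices over $F$ recalled before Theorem~\ref{thm:decomp}; this gives $\tr K=\sum_j\mu_j$ and, using $\Re$-cyclicity of the trace and the Cauchy--Schwarz inequality,
\[
\Re\tr(KW)=\sum_j\mu_j\,\Re\big(v_j^\ast Wv_j\big)\le\sum_j\mu_j\,\|v_j\|\,\|Wv_j\|\le\sum_j\mu_j=\tr K.
\]
Applying this with $K=K_1$ and $K=K_2$ makes the inner product above nonnegative, hence $\|Z_1-Z_2\|^2\ge\lambda^2\|Q_1-Q_2\|^2$, and taking square roots gives the lemma.

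I expect the only genuine obstacle to be careful bookkeeping over $\H$: one must never invoke full cyclicity of the trace, only $\Re\tr(AB)=\Re\tr(BA)$ and $\Re\tr M=\Re\tr M^\ast$, and one must check that each quantity being compared ($\|Z_1-Z_2\|^2$, $\tr K_i$, the cross term) is genuinely real, so that passing to real parts loses nothing. The three ingredients — the norm expansion, the contraction bound for $W=Q_1^\ast Q_2$, and the eigen-decomposition estimate — are otherwise routine and uniform in $F\in\{\R,\C,\H\}$.
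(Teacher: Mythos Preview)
Your proof is correct. Both your argument and the paper's expand $\|Z_1-Z_2\|^2$ around $\lambda^2\|Q_1-Q_2\|^2$ and reduce the inequality to the nonnegativity of a cross term, so the skeleton is the same. The difference lies in the setup: the paper first passes to the singular value decomposition, writes $\|Z_1-Z_2\|=\|U\Lambda_1-\Lambda_2 V\|$ and $\|Q_1-Q_2\|=\|UI_n^N-I_n^N V\|$ for $U=U_2^\ast U_1$, $V=V_2^\ast V_1$, and then embeds everything into $M_N^F$ via $\Sigma_i=(\Lambda_i,0)$ and a unitary extension $V_n$ of $V$; its cross-term analysis bounds the diagonal entries of $X=V_n^\ast I U$ by showing $\lambda_1(X+X^\ast)\le 2$ via the spectral norm and then invoking the eigen decomposition of $X+X^\ast$. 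Your route stays with the polar factors, sets $K_i=H_i-\lambda I_n$, and collapses the cross term to $\tr K_1+\tr K_2-\Re\tr(K_1 W)-\Re\tr(K_2 W)$ for the contraction $W=Q_1^\ast Q_2$, finishing by diagonalizing $K$ and applying Cauchy--Schwarz. This is more economical: you work entirely in $M_n^F$ rather than $M_N^F$, avoid the SVD change of basis, and the contraction property of $W$ replaces the spectral-norm bound on $X+X^\ast$. The quaternionic care you flag (using only $\Re\tr(AB)=\Re\tr(BA)$ and $\Re\tr M=\Re\tr M^\ast$, never full cyclicity) is exactly the subtlety the paper also navigates in its own computation of $(\sharp)$.
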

\begin{proof}
For $U:=U_2^\ast U_1\in U^F(N), V:=V_2^\ast V_1  \in U^F(n)$, we see that
\begin{align*}
\|Z_1-Z_2\|
=
\|U_1 \Lambda_1 V_1^\ast -U_2 \Lambda_2 V_2^\ast \|=
\|U \Lambda_1  - \Lambda_2  V \|,  \quad
\|Q_1-Q_2\|
=\|U I^N_n - I^N_nV \|.
\end{align*}
So it suffices to prove $\|U \Lambda_1  - \Lambda_2  V \| \geq \lambda \|U I^N_n - I^N_nV \|$.
Setting
\begin{gather*}
\Sigma_i:=(\Lambda_i,0_{N,N-n}), \ 
%
%
I:=(I^N_n,0_{N,N-n}),\ 
A:= \Sigma_1- \lambda I, \ 
B:= \Sigma_2 -\lambda I \in M_{N}^F,\\
V_n:=\begin{pmatrix}
       V&0_{n,N-n}\\
       0_{N-n,n}&I_{N-n}
     \end{pmatrix} \in U^F(N),
\quad
(\sharp):=2\Re\lr{UI- I V_n }{UA-B V_n}.
\end{gather*}
we have $\|U I^N_n-I^N_n V\|=\|U I-I V_n\|$ and
\[
\|U \Lambda_1 -\Lambda_2 V \|^2
=\|U \Sigma_1 -\Sigma_2 V_n \|^2
=\lambda^2\|U I- IV_n \|^2+ \|UA-B V_n\|^2 +  \lambda \times (\sharp).
\]
Thus it is enough to prove the non-negativity of $(\sharp)$, 
which is expressed as 
\begin{align*}
(\sharp)
&=\tr\left[ (I U^\ast- V_n^\ast I) UA+AU^\ast  (U I- I V_n ) \right] \\
&\quad+\tr \left[ (-IU^\ast+ V_n^\ast I  ) B V_n + V_n^\ast B  (-U I+ I V_n )\right].
\end{align*}
Using the assumptions that  $U, V_n$ are unitary and $A,B, I$ are real diagonal, we compute  
\begin{align*}
\tr (I A)=\tr (IU^\ast UA)=\tr(AU^\ast  U I), \quad
\tr(IB)=\tr ( V_n^\ast I   B V_n )=\tr ( V_n^\ast BI V_n ).
\end{align*}
If we set $X=(x^m_l)_{1\leq m,l \leq N}:= V_n^\ast I U, Y=(y^m_l)_{1\leq m,l \leq N}:=\T U^\ast I\T V_n$, 
then we observe that 
\begin{gather*}
(\tr (AU^\ast  I V_n))^\ast
=\tr (V_n^\ast I UA)
=\tr(XA),\quad
(\tr(  V_n^\ast B  U I))^\ast
=\tr(IU^\ast B V_n)
=\tr(YB),
\end{gather*}
which leads to 
\begin{align*}
(\sharp)
&=\tr\left[(2I- (X+X^\ast))A \right]
+\tr \left[(2I-(Y+Y^\ast))B \right] \\
&=\sum_{l=1}^n(2-(x^l_l+(x^l_l)^\ast))(\lambda_l(Z_1)-\lambda)+\sum_{l=1}^n(2-(y^l_l+(y^l_l)^\ast))(\lambda_l(Z_2)-\lambda).
\end{align*}
We moreover find $\lambda_1(X) \leq 1$, which together with Remark \ref{spectrum}(2) implies  
\begin{gather*}
\lambda_{1}(X+X^\ast)
\leq \lambda_{1}(X)+\lambda_1(X^\ast)
 \leq 2.
\end{gather*}
Since $X+X^\ast$ is Hermitian, 
there exist $P=(p^m_l)_{1 \leq m,l \leq N}\in U^F(N)$ and $\{\xi_m\}_{m=1}^N, \R$ 
with $|\xi_m|=\lambda_{m}(X+X^\ast)$ such that 
$X+X^\ast =P^\ast \diag(\xi_1,\ldots,\xi_N ) P$, 
which implies 
\[
2\Re(x^l_l)=x^l_l+(x^l_l)^\ast=\sum_{m=1}^N (p^m_l )^\ast \xi_m p^m_l  \leq 2\sum_{m=1}^N (p^m_l )^\ast p^m_l=2. 
\]
The same argument proves $\Re(y^m_m) \leq 1$.
This together with $\lambda_l(Z_1),\lambda_m(Z_2) \geq \lambda$ implies
 \[
(\sharp)
= 2\sum_{l=1}^n(1-\Re(x^l_l))(\lambda_l(Z_1)-\lambda)
+ 2\sum_{l=1}^n(1-\Re(y^l_l))(\lambda_l(Z_2)-\lambda)
\geq 0.
\]
This completes the proof.
\end{proof}
We next show that the scaled polar decomposition is the nearest point projection from $M_{N,n}^F$ to $X_{N,n}^F$ 
even if the decomposition is not unique.
\begin{lem}\label{lem:nearest}
Let $Z=QH$ be a polar decomposition of $Z \in M_{N,n}^F$.
For any $r>0$ we have
\[
\min_{Q' \in V_{N,n}^F}\|Z-r Q'\|=\|Z- r Q\|=\sqrt{\sum_{l=1}^n (\lambda_l(Z)-r)^2}.
\]
\end{lem}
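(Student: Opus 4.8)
The plan is to treat the two equalities separately: the value $\|Z-rQ\|$ by a direct computation, and the minimality of $Q$ by an argument in the spirit of the von Neumann trace inequality. For the computation, fix a polar decomposition $Z=QH$ with $Q^\ast Q=I_n$ and $H\in M_n^F$ Hermitian with non-negative eigenvalues. Then $Z-rQ=Q(H-rI_n)$, so $\|Z-rQ\|^2=\tr\big((H-rI_n)^\ast Q^\ast Q(H-rI_n)\big)=\|H-rI_n\|^2$, and the eigen decomposition of $H$ (whose eigenvalues are the singular values $\lambda_l(Z)$ of $Z$) gives $\|H-rI_n\|^2=\sum_{l=1}^n(\lambda_l(Z)-r)^2$.

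For the minimality, I would first observe that $\|Q'\|^2=\tr(Q'^\ast Q')=n=\|Q\|^2$ for every $Q'\in V_{N,n}^F$, so the identity $\|Z-rQ'\|^2=\|Z\|^2+r^2n-2r\Re\lr{Z}{Q'}$ (together with $r>0$) reduces the assertion to the inequality $\Re\lr{Z}{Q'}\le\Re\lr{Z}{Q}=\sum_{l=1}^n\lambda_l(Z)$, the right-hand equality coming from $Z^\ast Q=H^\ast Q^\ast Q=H$. To bound $\Re\lr{Z}{Q'}=\Re\tr(Z^\ast Q')$ I would insert a singular value decomposition $Z=U\Lambda V^\ast$ as in Theorem \ref{thm:decomp} and set $W:=U^\ast Q'V$; a one-line check gives $W^\ast W=V^\ast(Q'^\ast Q')V=I_n$, so $W=(w^m_l)\in V_{N,n}^F$. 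Using that the real part of a trace of quaternion matrices is unchanged under a cyclic shift of the factors (which follows from $\Re(ab)=\Re(ba)$ in $\H$), one gets $\Re\tr(Z^\ast Q')=\Re\tr(V\Lambda^\ast U^\ast Q')=\Re\tr(\Lambda^\ast W)=\sum_{l=1}^n\lambda_l(Z)\,\Re(w^l_l)$. Finally, orthonormality of the columns of $W$ forces $\sum_{m=1}^N\|w^m_l\|^2=1$, hence $\Re(w^l_l)\le\|w^l_l\|\le 1$, and since $\lambda_l(Z)\ge 0$ this yields $\Re\tr(\Lambda^\ast W)\le\sum_{l=1}^n\lambda_l(Z)$, as needed.

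As with Lemma \ref{lemma:Li}, the only genuine delicacy is the non-commutativity of $\H$: traces of quaternion matrices cannot be cyclically permuted, so one must keep every manipulation under $\Re\tr$ from the very start. This is harmless here since the minimised quantity $\|Z-rQ'\|^2$ is visibly real and enters only through $\Re\lr{\cdot}{\cdot}$. I do not expect to invoke the Cauchy--Schwarz inequality recorded earlier: the crude bound $\|\lr{Z}{Q'}\|\le\|Z\|\sqrt n$ points the wrong way, since $\sqrt n\,\|Z\|=\sqrt{\,n\sum_l\lambda_l(Z)^2\,}\ge\sum_l\lambda_l(Z)$, so the refined coordinatewise estimate afforded by the singular value decomposition is exactly what makes the inequality sharp and constitutes the technical heart of the proof.
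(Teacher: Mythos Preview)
Your argument is correct and follows essentially the same route as the paper: both proofs pass to the singular value decomposition and reduce the inequality to the elementary fact that the diagonal entries of a matrix with orthonormal columns have real part at most $1$. The only organizational difference is that the paper expands $\|Z-rQ'\|^2$ directly---writing $rU^\ast Q'V=\begin{pmatrix}X\\Y\end{pmatrix}$ and computing $\|\Lambda'-X\|^2+\|Y\|^2=\sum_l(\lambda_l-r)^2+2\sum_l\lambda_l(r-\Re x^l_l)$---rather than first isolating the cross term $\Re\lr{Z}{Q'}$; this lets the paper avoid invoking the cyclic invariance of $\Re\tr$, which you use and correctly justify for quaternions via $\Re(ab)=\Re(ba)$.
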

\begin{proof}
Let $Z=QH=U \Lambda V^\ast$ be polar and singular value decompositions of $Z$.
By Theorem \ref{thm:decomp}, we have 
\begin{align*}
\|Z-r Q\|^2
=\|U \Lambda V^\ast-r Q\|=\|\Lambda-r U^\ast Q V\|^2 
=\left\|\Lambda-r  I^N_n \right\|
=\sum_{l=1}^n (\lambda_l(Z)-r)^2,
\end{align*}
implying the last equality in the lemma.
For any $Q' \in V_{N,n}^F$, 
we define $X=(x^l_m)_{1 \leq m,l \leq n}\in M_{n}^F$ and $Y \in M_{N-n,n}^F$ as 
\[
\begin{pmatrix}
X\\
Y\\
\end{pmatrix}
:=r U^\ast Q' V.
\]
We then have $X^\ast X+Y^\ast Y=r^2I_n$, hence $\Re (x^l_l) \leq \|x^l_l\| \leq r$,  and 
\[
\|Z-rQ'\|
=\|\Lambda-rU^\ast Q' V\|
=
\left\|
\begin{pmatrix}
\Lambda'-X\\\
-Y\\
\end{pmatrix}
\right\|, \quad
\Lambda':=\diag(\lambda_1(Z),\ldots,\lambda_n(Z)).
\]
A direct computation proves  
\begin{align*}
\left\|
\begin{pmatrix}
\Lambda'-X\\\
-Y\\
\end{pmatrix}
\right\|^2 
&=\|(\Lambda'-X)\|^2+\|Y\|^2\\
&=
\|(\Lambda'-r I_n)-(X-r I_n)\|^2+\tr\{ r^2 I_n-X^\ast X\}  \\
&=\| \Lambda'-r I_n\|^2 
+ \tr\{
\Lambda'(r I_n-X)+(r I_n-X^\ast)\Lambda'
\}\\
&=\sum_{l=1}^n (\lambda_l(Z)-r)^2+2 \sum_{l=1}^n \lambda_l(Z)(r-\Re (x^l_l))
\geq \sum_{l=1}^n (\lambda_l(Z)-r)^2,
\end{align*}
which completes the proof.
\end{proof}
In the same way as for $X_{N,n}^F$,
we define an $(\varepsilon, \theta)$-approximation space $U^F(n)_{\varepsilon,\theta}$ 
of $U^F(n)=V_{n,n}^{F}$ by       
\begin{align*}
U^F(n)_{\varepsilon,\theta}
:=\left\{(z_1,\ldots,z_n) \in M_{n}^F \bigg| \big|\|z_l\|-1\big|<\varepsilon,
                            \left\|\lr{\frac{z_l}{\|z_l\|}}{\frac{z_m}{\|z_m\|}}\right\|< \theta(\varepsilon), \ 1 \leq  l<m \leq n\right\}.
\end{align*}
\begin{cor}\label{prop:l}
For any $n \in \N, \varepsilon >0$ and a function $\theta:(0,\infty)\to(0,\infty)$, 
we set 
\[
L(n,\varepsilon,\theta)
:=\sup_{W \in U^F(n)_{\varepsilon,\theta}}\min_{Q' \in U^F(n)}\|W-Q'\|.
\]
Then, for any $N \geq n$ and $Z \in X_{N,n,\varepsilon,\theta}^F$, we have
\[
\lambda_n(Z) \geq \sqrt{N^F-1}\cdot (1-L(n,\varepsilon,\theta)).
\]
\end{cor}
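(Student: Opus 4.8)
The plan is to reduce the statement for $Z \in X^F_{N,n,\varepsilon,\theta}$ to the defining quantity $L(n,\varepsilon,\theta)$ for $U^F(n)$ by renormalizing. Let $Z = (z_1,\dots,z_n) \in X^F_{N,n,\varepsilon,\theta}$ and let $Z = QH$ be a polar decomposition. By Lemma \ref{lem:nearest}, for the scale $r = \sqrt{N^F-1}$ we have
\[
\min_{Q' \in V^F_{N,n}} \bigl\|Z - \sqrt{N^F-1}\,Q'\bigr\| = \Bigl(\sum_{l=1}^n \bigl(\lambda_l(Z) - \sqrt{N^F-1}\bigr)^2\Bigr)^{1/2},
\]
and in particular each singular value is controlled by any single fixed choice of $Q'$. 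So the task is to produce \emph{some} $Q' \in V^F_{N,n}$ with $\|Z - \sqrt{N^F-1}\,Q'\|$ small, which then forces $\lambda_n(Z) \ge \sqrt{N^F-1} - \|Z - \sqrt{N^F-1}\,Q'\|$, and hence the claimed lower bound once $\|Z - \sqrt{N^F-1}\,Q'\| \le \sqrt{N^F-1}\,L(n,\varepsilon,\theta)$.

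To build that $Q'$: first I would pass to the top $n$ rows. Choose $U \in U^F(N)$ so that $\pi^N_n(n)(U^\ast$-rotate$)$ puts $Z$ into a form supported on the first $n$ rows — more precisely, since $z_1,\dots,z_n \in F^N$ span an at most $n$-dimensional subspace, there is $U \in U^F(N)$ with $U z_l \in \mathrm{span}(e_1,\dots,e_n)$ for all $l$; write $UZ = I^N_n \cdot W$ with $W \in M^F_n$. By Remark \ref{spectrum}(1) the polar decomposition and the singular values are unchanged under the left $U$-action, and $\|Z - \sqrt{N^F-1}\,Q'\| = \|UZ - \sqrt{N^F-1}\,UQ'\|$, so we may work with $W$. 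Now the conditions defining $X^F_{N,n,\varepsilon,\theta}$ — namely $\|z_l\| \in A^{N^F}_\varepsilon$, i.e. $\|z_l\|/\sqrt{N^F-1} \in (1-\varepsilon,1+\varepsilon)$, and the normalized angle bound $\|\langle z_l/\|z_l\|, z_m/\|z_m\|\rangle\| < \theta(\varepsilon)$ — say exactly that $W/\sqrt{N^F-1} \in U^F(n)_{\varepsilon,\theta}$. By the definition of $L(n,\varepsilon,\theta)$ there is $Q'_0 \in U^F(n)$ with $\|W/\sqrt{N^F-1} - Q'_0\| \le L(n,\varepsilon,\theta)$, i.e. $\|W - \sqrt{N^F-1}\,Q'_0\| \le \sqrt{N^F-1}\,L(n,\varepsilon,\theta)$. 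Setting $Q' := U^\ast I^N_n Q'_0 \in V^F_{N,n}$ gives $\|Z - \sqrt{N^F-1}\,Q'\| \le \sqrt{N^F-1}\,L(n,\varepsilon,\theta)$.

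Combining: $\lambda_n(Z) \le \lambda_l(Z)$ is not what we want; rather, from Lemma \ref{lem:nearest} applied with this specific $Q'$,
\[
\sum_{l=1}^n \bigl(\lambda_l(Z) - \sqrt{N^F-1}\bigr)^2 = \|Z - \sqrt{N^F-1}\,Q\|^2 \le \|Z - \sqrt{N^F-1}\,Q'\|^2 \le (N^F-1)\,L(n,\varepsilon,\theta)^2,
\]
so in particular $|\lambda_n(Z) - \sqrt{N^F-1}| \le \sqrt{N^F-1}\,L(n,\varepsilon,\theta)$, whence $\lambda_n(Z) \ge \sqrt{N^F-1}(1 - L(n,\varepsilon,\theta))$, as desired. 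The main thing to get right is the first reduction step — verifying cleanly that rotating $Z$ into the first $n$ rows is legitimate (existence of $U$, invariance of singular values and of the distance to $\sqrt{N^F-1}\,V^F_{N,n}$ under the left $U^F(N)$-action) and that the rescaled top block $W/\sqrt{N^F-1}$ genuinely lands in $U^F(n)_{\varepsilon,\theta}$, matching the two defining inequalities term by term. Everything after that is a direct application of Lemma \ref{lem:nearest} and the definition of $L(n,\varepsilon,\theta)$.
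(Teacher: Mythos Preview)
Your proof is correct and follows essentially the same route as the paper: rotate $Z$ by a left unitary so that its columns live in the first $n$ coordinates, observe that the resulting $n\times n$ block divided by $\sqrt{N^F-1}$ lies in $U^F(n)_{\varepsilon,\theta}$, and then combine the definition of $L(n,\varepsilon,\theta)$ with Lemma~\ref{lem:nearest}. The only cosmetic difference is that the paper takes $U$ from the singular value decomposition (so the top block is $\Lambda' V^\ast$ and its polar part is explicitly $V^\ast$), whereas you pick an arbitrary rotation into the first $n$ rows and appeal directly to compactness of $U^F(n)$ for the minimizer; both lead to the same inequality $|\lambda_n(Z)-\sqrt{N^F-1}|\le \sqrt{N^F-1}\,L(n,\varepsilon,\theta)$.
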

\begin{proof}
For $Z \in  X^F_{N,n,\varepsilon,\theta}$,
let $Z=QH=U\Lambda V^\ast$ be polar and singular decompositions of $Z$.
Set $\Lambda':=\diag(\lambda_1(Z), \ldots, \lambda_n(Z))$.
It turns out that 
\[
W':=\frac1{\sqrt{N^F-1}}(\Lambda' V^\ast )\in  U^F(n)_{\varepsilon,\theta}
\]
and $W'=V^\ast \cdot (V W')$ is a polar decomposition of $W'$.
We conclude that 
\begin{align*}
\sqrt{N^F-1}-\lambda_n(Z)
&\leq \|Z-\sqrt{N^F-1}\cdot Q\| \\
&=\left\|U \cdot\begin{pmatrix} \Lambda' V^\ast\\ 0_{N-n,n} \end{pmatrix}-\sqrt{N^F-1}\cdot U \cdot\begin{pmatrix} V^\ast\\ 0_{N-n,n} \end{pmatrix} \right\| 
=\sqrt{N^F-1} \cdot \|W'- V^\ast \|\\
&
\leq 
\sqrt{N^F-1} \sup_{W \in U^F(n)_{\varepsilon,\theta}}\min_{Q' \in U^F(n)}\|W-Q'\|
=\sqrt{N^F-1}\cdot L(n,\varepsilon,\theta),
\end{align*}
where we apply Lemma \ref{lem:nearest} for $r =\sqrt{N^F-1}$ in the first inequality and 
for $r=1$ in the last inequality.  This completes the proof.
\end{proof}
Corollary \ref{prop:l} implies that 
if  $L(n,\varepsilon,\theta)<1$,
then the map 
\[
Q^F : X^F_{N,n,\varepsilon,\theta} \ni Z \mapsto Q^F(Z) \in V_{N,n}^F
\]
is well-defined, where $Z=Q^F(Z)H$ is the polar decomposition of $Z$.
\begin{lem}\label{lem:lip}
Assume $L(n,\varepsilon,\theta)<1$.
Then, the map
\[
\Phi^{N,n,F}_{\varepsilon,\theta}: X^F_{N,n,\varepsilon,\theta} \ni Z
\mapsto 
\sqrt{N^F-1}\cdot  Q^F(Z) \in X_{N,n}^F
\]
has Lipschitz constant at most $(1-L(n,\varepsilon,\theta))^{-1}$.
\end{lem}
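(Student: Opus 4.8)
The plan is to estimate the distance $\|\Phi^{N,n,F}_{\varepsilon,\theta}(Z_1) - \Phi^{N,n,F}_{\varepsilon,\theta}(Z_2)\|$ from above for any two points $Z_1, Z_2 \in X^F_{N,n,\varepsilon,\theta}$ by directly invoking Lemma \ref{lemma:Li}. Write the polar decompositions $Z_1 = Q^F(Z_1) H_1$ and $Z_2 = Q^F(Z_2) H_2$; since $L(n,\varepsilon,\theta) < 1$, Corollary \ref{prop:l} guarantees $\lambda_n(Z_i) \geq \sqrt{N^F-1}\,(1 - L(n,\varepsilon,\theta)) > 0$, so both polar decompositions are unique and the map $Q^F$ is well defined on $X^F_{N,n,\varepsilon,\theta}$. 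In particular the quantity $\lambda := \min\{\lambda_n(Z_1), \lambda_n(Z_2)\}$ appearing in Lemma \ref{lemma:Li} satisfies $\lambda \geq \sqrt{N^F-1}\,(1 - L(n,\varepsilon,\theta))$.

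The main computation is then just a chain of inequalities. By definition $\Phi^{N,n,F}_{\varepsilon,\theta}(Z_i) = \sqrt{N^F-1}\cdot Q^F(Z_i)$, so
\[
\|\Phi^{N,n,F}_{\varepsilon,\theta}(Z_1) - \Phi^{N,n,F}_{\varepsilon,\theta}(Z_2)\|
= \sqrt{N^F-1}\,\|Q^F(Z_1) - Q^F(Z_2)\|.
\]
Applying Lemma \ref{lemma:Li} to $Z_1, Z_2$ gives $\|Q^F(Z_1) - Q^F(Z_2)\| \leq \lambda^{-1}\|Z_1 - Z_2\|$, and combining this with the lower bound on $\lambda$ yields
\[
\|\Phi^{N,n,F}_{\varepsilon,\theta}(Z_1) - \Phi^{N,n,F}_{\varepsilon,\theta}(Z_2)\|
\leq \frac{\sqrt{N^F-1}}{\sqrt{N^F-1}\,(1 - L(n,\varepsilon,\theta))}\,\|Z_1 - Z_2\|
= \frac{\|Z_1 - Z_2\|}{1 - L(n,\varepsilon,\theta)},
\]
which is exactly the asserted bound on the Lipschitz constant.

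The one point that needs a word of care — really the only genuine obstacle — is confirming that the image of $\Phi^{N,n,F}_{\varepsilon,\theta}$ lands in $X_{N,n}^F$, i.e.\ that $\sqrt{N^F-1}\cdot Q^F(Z)$ has columns of squared norm $N^F-1$ and pairwise $\langle\cdot,\cdot\rangle$ equal to zero; this is immediate from $Q^F(Z)^\ast Q^F(Z) = I_n$ in the polar decomposition, so the target is legitimate and no scaling of the metric is needed beyond the factor $\sqrt{N^F-1}$ already built into both spaces. Everything else is a formal consequence of Lemma \ref{lemma:Li} and Corollary \ref{prop:l}; there is no delicate estimate here, since the hard analytic work has already been absorbed into the definition of $L(n,\varepsilon,\theta)$ and into Corollary \ref{prop:l}.
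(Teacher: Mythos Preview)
Your proof is correct and follows essentially the same approach as the paper: apply Lemma \ref{lemma:Li} to bound $\|Q^F(Z_1)-Q^F(Z_2)\|$ by $\lambda^{-1}\|Z_1-Z_2\|$, then use Corollary \ref{prop:l} to bound $\lambda$ below by $\sqrt{N^F-1}\,(1-L(n,\varepsilon,\theta))$. The extra remark you add about the image landing in $X_{N,n}^F$ is a harmless sanity check that the paper leaves implicit.
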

\begin{proof}
For any $Z, W \in X^F_{N,n,\varepsilon,\theta}$, 
Lemma \ref{lemma:Li} and Corollary \ref{prop:l} together imply
\begin{align*}
\|\Phi^{N,n,F}_{\varepsilon,\theta}(Z)-\Phi^{N,n,F}_{\varepsilon,\theta}(W)\|
&=\sqrt{N^F-1}\cdot\|Q^F(Z)-Q^F(W)\|\\
&\le \frac{\sqrt{N^F-1}}{\min\{\lambda_n(Z), \lambda_n(W)\} }\|Z-W\|
\leq \frac{1}{1-L(n,\varepsilon,\theta)} \|Z-W\|.
\end{align*}
\end{proof}
%
\subsubsection{Condition for $L(n,\varepsilon,\theta)<1$}
Given any $\delta>0$, we define the positive monotone increasing functions on $[0,1)$ by 
\[
\varphi_\delta(s):=\frac{(\delta+s)^2}{1-s}, \quad
R_\delta(s):=\frac{2(\delta+s)}{ 1-s}.
\]
Moreover,  for any $\sigma\in[0,1]$, we set 
\begin{gather*}
 s_l=s_l(\delta):=\begin{cases}
 0 & \text{\ if\ }l=0,\\
 s_{l-1}+\varphi_\delta(s_{l-1}) & \text{\ if\ } 1 \leq l \leq n_\sigma,
 \end{cases}\\
n_\sigma=n_\sigma(\delta):=\max\left\{ n \in \N \cup\{0\}\ |\ s_{n}(\delta) <\sigma \right\}+1.
\end{gather*}
It follows that $s_1(\delta)=\delta^2$ and
$s_{n_\sigma-1} < \sigma \leq s_{n_\sigma}$.
\begin{lem}\label{lem:itere}
For any $\delta>0$ and $\sigma\in[0,1)$, we have
\[
\log_{1+R_\delta(\sigma)}\left(1+R_\delta(\sigma)\frac{\sigma}{\delta^2} \right)
\leq
{n_\sigma}(\delta)
<
\log_{1+R_\delta(0)}\left(1+ R_\delta(0)\frac{\sigma}{\delta^2} \right)+1.
\]
The last inequality also holds true for $\sigma=1$.
\end{lem}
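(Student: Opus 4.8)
The plan is to obtain a closed form for the sequence $\{s_l\}$ and then translate the resulting sharp estimate of $n_\sigma$ into the two logarithmic bounds by elementary inequalities. The key point is that the substitution $u_l:=\delta+s_l$ linearizes the recursion. From $s_l-s_{l-1}=\varphi_\delta(s_{l-1})=(\delta+s_{l-1})^2/(1-s_{l-1})$ one computes $u_l=u_{l-1}\cdot\frac{1+\delta}{1-s_{l-1}}$, hence $\frac{1}{u_{l-1}}-\frac{1}{u_l}=\frac{1}{u_{l-1}}\cdot\frac{(1+\delta)-(1-s_{l-1})}{1+\delta}=\frac{1}{1+\delta}$ for every $l\ge1$. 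Telescoping from $1$ to $L$ (legitimate for $0\le L\le n_\sigma$, since $s_0,\dots,s_{n_\sigma-1}<\sigma\le1$) gives $\frac1\delta-\frac{1}{\delta+s_L}=\frac{L}{1+\delta}$, that is,
\[
s_L=\frac{L\delta^2}{1+\delta-L\delta},\qquad L=\frac{(1+\delta)\,s_L}{\delta(\delta+s_L)} .
\]
Since $t\mapsto(1+\delta)t/(\delta(\delta+t))$ is increasing on $(0,\infty)$ and $s_{n_\sigma}\ge\sigma$, I read off the single fact needed below, $n_\sigma\ge m_\sigma:=\dfrac{(1+\delta)\sigma}{\delta(\delta+\sigma)}$ (combined with $s_{n_\sigma-1}<\sigma$ it also yields $n_\sigma<m_\sigma+1$, but that is not required).

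For the lower bound I want $(1+R_\delta(\sigma))^{n_\sigma}\ge 1+R_\delta(\sigma)\sigma/\delta^2$. If $n_\sigma=1$ then $s_1=\delta^2\ge\sigma$, so $\sigma/\delta^2\le1$ and the inequality is immediate. If $n_\sigma\ge2$, the binomial theorem gives $(1+R_\delta(\sigma))^{n_\sigma}\ge 1+n_\sigma R_\delta(\sigma)+\binom{n_\sigma}{2}R_\delta(\sigma)^2$, so it suffices to prove $n_\sigma+\tfrac12 n_\sigma(n_\sigma-1)R_\delta(\sigma)\ge\sigma/\delta^2$. From $n_\sigma\ge m_\sigma$ one gets $n_\sigma\delta\ge(1+\delta)\sigma/(\delta+\sigma)$, hence $\frac{2\sigma}{n_\sigma\delta}\le\frac{2(\delta+\sigma)}{1+\delta}\le\frac{2(\delta+\sigma)}{1-\sigma}=R_\delta(\sigma)$, i.e. $\tfrac12 n_\sigma R_\delta(\sigma)\ge\sigma/\delta$; therefore $n_\sigma+\tfrac12 n_\sigma(n_\sigma-1)R_\delta(\sigma)\ge n_\sigma+\frac{(n_\sigma-1)\sigma}{\delta}=\sigma\bigl(\frac{n_\sigma}{\sigma}+\frac{n_\sigma-1}{\delta}\bigr)$. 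Using again $n_\sigma\ge m_\sigma$ and monotonicity of $t\mapsto\frac t\sigma+\frac{t-1}\delta$, this is $\ge\sigma\bigl(\frac{m_\sigma}{\sigma}+\frac{m_\sigma-1}{\delta}\bigr)$, and a direct computation gives $\frac{m_\sigma}{\sigma}+\frac{m_\sigma-1}{\delta}=\frac{\delta(1+\delta)+\sigma-\delta^2}{\delta^2(\delta+\sigma)}=\frac1{\delta^2}$, which closes the estimate.

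For the upper bound, observe $R_\delta(0)=2\delta$, so the claim is $(1+2\delta)^{n_\sigma-1}<1+2\sigma/\delta$. I prove by induction on $l$ that $(1+2\delta)^l\le 1+\frac{2s_l}{\delta}$ for $0\le l\le n_\sigma-1$: the case $l=0$ is trivial, and multiplying the bound for $l$ by $1+2\delta$ reduces the step to $\delta^2+(1+2\delta)s_l\le s_{l+1}=s_l+(\delta+s_l)^2/(1-s_l)$, which after multiplying by $1-s_l>0$ becomes $(\delta+s_l)^2-\delta(\delta+2s_l)(1-s_l)=s_l\bigl(s_l+\delta^2+2\delta s_l\bigr)\ge0$. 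Applying this with $l=n_\sigma-1$ and using $s_{n_\sigma-1}<\sigma$ yields $(1+2\delta)^{n_\sigma-1}<1+2\sigma/\delta$, and taking $\log_{1+2\delta}$ gives $n_\sigma-1<\log_{1+R_\delta(0)}\!\bigl(1+R_\delta(0)\sigma/\delta^2\bigr)$; this argument never uses $\sigma<1$, so it covers $\sigma=1$ as well.

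The only genuinely nonobvious step is the linearizing substitution $u_l=\delta+s_l$ in the first paragraph; everything after it is bookkeeping with the closed form $s_L=L\delta^2/(1+\delta-L\delta)$. I would also flag two points that must not be skipped: in the lower bound one truly needs the quadratic term of the binomial expansion, the linear Bernoulli inequality $(1+x)^k\ge1+kx$ being insufficient; and the degenerate cases $n_\sigma=1$ (equivalently $\sigma\le\delta^2$) and $\sigma\in\{0,1\}$ should be dispatched separately, as above.
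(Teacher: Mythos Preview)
Your proof is correct, and it takes a genuinely different route from the paper's.

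The paper never finds a closed form for $s_l$. Instead it observes that
\[
\varphi_\delta(s_l)=\varphi_\delta(s_{l-1})\Bigl(1+\tfrac{2(\delta+s_l)}{1-s_l}\Bigr),
\]
bounds the multiplicative factor by $1+R_\delta(0)$ from below and $1+R_\delta(\sigma)$ from above (using $0\le s_l<\sigma$), and then sums the resulting geometric series to get
\[
\delta^2\,\frac{(1+R_\delta(0))^{l}-1}{R_\delta(0)}\le s_l\le \delta^2\,\frac{(1+R_\delta(\sigma))^{l}-1}{R_\delta(\sigma)}.
\]
Plugging in $l=n_\sigma-1$ and $l=n_\sigma$ together with $s_{n_\sigma-1}<\sigma\le s_{n_\sigma}$ yields the two logarithmic bounds directly.

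Your approach is sharper at the outset: the substitution $u_l=\delta+s_l$ makes $1/u_l$ an arithmetic sequence, giving the exact formula $s_L=L\delta^2/(1+\delta-L\delta)$ and hence essentially $n_\sigma=\lceil m_\sigma\rceil$ with $m_\sigma=(1+\delta)\sigma/(\delta(\delta+\sigma))$. The price is that the logarithmic shape of the lemma is no longer automatic: you need a separate induction for the upper bound and a binomial argument (using the quadratic term) for the lower bound. Both of these extra steps are carried out correctly. One bonus of your route is that the next lemma in the paper, which proves $n_\sigma<1+\sigma/\delta^2$, becomes an immediate consequence of the closed form, since $m_\sigma=(1+\delta)\sigma/(\delta(\delta+\sigma))\le \sigma/\delta^2$.
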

\begin{proof}
For $1 \leq l \leq n_\sigma-1$, 
we observe that  $s_{l}<\sigma$ and 
\begin{gather*}
\varphi_\delta(s_{l})
=\varphi_\delta(s_{l-1}+\varphi_\delta(s_{l-1}))
=\frac{(\delta+s_{l-1}+\varphi_\delta(s_{l-1}))^2}{1-(s_{l-1}+\varphi_\delta(s_{l-1}))}
=\varphi_\delta(s_{l-1}) \left(1+ \frac{2(\delta+s_l)}{ 1-s_{l}} \right),\\
R_\delta(0)
\leq
\frac{2(\delta+s_l)}{ 1-s_{l}}
\leq 
R_\delta(\sigma),
\end{gather*}
and thereby,
\begin{align*}
s_{l+1}
=s_{l}+\varphi_\delta(s_{l}) 
&\leq 
s_{l-1} + \varphi_\delta(s_{l-1}) (1+(1+R_\delta(\sigma))) \\
&\leq 
s_{0} + \varphi_\delta(s_0) \sum_{m=0}^{l} (1+R_\delta(\sigma))^m 
\leq \delta^2 \frac{(1+R_\delta(\sigma))^{l+1}-1}{R_\delta(\sigma)}.
\end{align*}
In the same way, we estimate $s_{l+1}$ from below and conclude 
\begin{align}\label{eq:ratio}
\frac{(1+R_\delta(0))^{l+1}-1}{R_\delta(0)}
\leq
\frac{s_{l+1}}{\delta^2} 
\leq \frac{(1+R_\delta(\sigma))^{l+1}-1}{R_\delta(\sigma)}.
\end{align}
Combining this with $s_{n_\sigma-1}<\sigma \leq s_{n_\sigma}$ yields
\begin{align*}
\frac{(1+R_\delta(0))^{n_\sigma-1}-1}{R_\delta(0)} \leq \frac{s_{n_\sigma-1}}{\delta^2}
<
\frac{\sigma}{\delta^2}  
\leq \frac{s_{n_\sigma}}{\delta^2}
\leq \frac{(1+R_\delta(\sigma))^{n_\sigma}-1}{R_\delta(\sigma)},
\end{align*}
which provides the desired result.
\end{proof}
\begin{lem}\label{lem:above}
If $0\leq \sigma \leq \delta^2$, then  $n_\sigma(\delta)=1$
and for $\delta^2<\sigma \leq 1$ we have
\[
 n_{\sigma}(\delta) < 1+\frac{\sigma}{\delta^2}.
\]
\end{lem}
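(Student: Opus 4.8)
The plan is to reduce both assertions to one elementary estimate on the iterates, namely $s_l(\delta)\ge l\,\delta^2$ for every index $l$ in the range on which $s_l(\delta)$ is defined.

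First I would record the pointwise bound on the increment function: for every $\delta>0$ and every $s\in[0,1)$,
\[
\varphi_\delta(s)=\frac{(\delta+s)^2}{1-s}\ \ge\ (\delta+s)^2\ \ge\ \delta^2 ,
\]
since $0\le s<1$ forces $1-s\le 1$ and $(\delta+s)^2\ge\delta^2$. As $s_0(\delta)=0$ and $s_l(\delta)=s_{l-1}(\delta)+\varphi_\delta(s_{l-1}(\delta))$, this immediately gives $s_l(\delta)\ge s_{l-1}(\delta)+\delta^2$ for $l\ge 1$, hence $s_l(\delta)\ge l\,\delta^2$ by induction; in particular the sequence is strictly increasing while defined, so $\{\,l\mid s_l(\delta)<\sigma\,\}$ is an initial segment of $\N\cup\{0\}$, it is finite because $s_l(\delta)\ge l\,\delta^2\to\infty$, and hence $n_\sigma(\delta)$ is a well-defined positive integer with $s_{\,n_\sigma(\delta)-1}(\delta)<\sigma$ a genuine iterate.

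Now both parts fall out. If $0\le\sigma\le\delta^2$, then for every $l\ge 1$ we have $s_l(\delta)\ge l\,\delta^2\ge\delta^2\ge\sigma$, so the only index with $s_l(\delta)<\sigma$ is $l=0$ (which occurs precisely when $\sigma>0$, since $s_0(\delta)=0$); hence $\max\{\,l\in\N\cup\{0\}\mid s_l(\delta)<\sigma\,\}=0$ and $n_\sigma(\delta)=1$, the endpoint $\sigma=0$ being the degenerate case covered by the convention attached to $n_\sigma$. If instead $\delta^2<\sigma\le 1$, then $s_{\,n_\sigma(\delta)-1}(\delta)<\sigma$ by definition, while $s_{\,n_\sigma(\delta)-1}(\delta)\ge(n_\sigma(\delta)-1)\,\delta^2$ by the estimate above; combining the two gives $(n_\sigma(\delta)-1)\,\delta^2<\sigma$, that is, $n_\sigma(\delta)<1+\sigma/\delta^2$.

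I do not expect any real obstacle here: once the inequality $\varphi_\delta\ge\delta^2$ is observed, the rest is a couple of lines. The only points needing a moment's care are bookkeeping --- confirming that $s_{\,n_\sigma(\delta)-1}(\delta)$ is an actual value of the recursion (handled by the monotonicity remark) and disposing of the boundary value $\sigma=0$. If one prefers to stay closer to Lemma~\ref{lem:itere}, the same conclusions follow from its upper bound together with Bernoulli's inequality $(1+x)^y\ge 1+xy$ for $x\ge 0$ and $y\ge 1$, which gives $n_\sigma(\delta)<\log_{1+R_\delta(0)}\!\bigl(1+R_\delta(0)\,\sigma/\delta^2\bigr)+1\le 1+\sigma/\delta^2$ when $\sigma\ge\delta^2$, and from monotonicity of the logarithm, which forces $n_\sigma(\delta)<2$, hence $n_\sigma(\delta)=1$, when $\sigma\le\delta^2$; but the direct route above seems cleaner.
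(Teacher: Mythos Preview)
Your proof is correct and more elementary than the paper's. The paper deduces the case $\delta^2<\sigma\le 1$ from Lemma~\ref{lem:itere}, namely from the upper bound
\[
n_\sigma(\delta) < \log_{1+R_\delta(0)}\!\left(1+R_\delta(0)\frac{\sigma}{\delta^2}\right)+1,
\]
and then checks that the logarithm is below $\sigma/\delta^2$ via the monotonicity of $r\mapsto(1+1/r)^r$. You bypass Lemma~\ref{lem:itere} entirely by the one-line observation $\varphi_\delta(s)\ge\delta^2$ on $[0,1)$, which yields $s_l\ge l\,\delta^2$ and hence $(n_\sigma-1)\delta^2\le s_{n_\sigma-1}<\sigma$ directly. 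Your route is shorter and self-contained; the paper's route has the advantage of reusing Lemma~\ref{lem:itere}, whose lower bound is genuinely needed later (in Claim~\ref{clm}), so the machinery is already in place. Your aside that Bernoulli's inequality $(1+R)^{\sigma/\delta^2}>1+R\,\sigma/\delta^2$ (strict since $\sigma/\delta^2>1$) would also finish the paper's argument is correct and slightly cleaner than the $(1+1/r)^r$ formulation used there.
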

\begin{proof}
The first claim follows from the definition of $n_\sigma(\delta)$ and the fact $s_1=\delta^2$.
In the case of $\delta^2<\sigma\leq1$, 
it suffices to prove by Lemma \ref{lem:itere} that
\[
\log_{1+R_\delta(0)}\left(1+ R_\delta(0)\frac{\sigma}{\delta^2} \right)
< \frac{\sigma}{\delta^2}
\Longleftrightarrow
\left(1+ R_\delta(0)\frac{\sigma}{\delta^2} \right)^{\delta^2/\sigma R_\delta(0)}
<\left(1+R_\delta(0)\right)^{1/R_\delta(0)}.
\]
This follows from the monotone increasing property of $r \mapsto (1+1/r)^r$ on $(0,\infty)$ and the condition 
\[
\frac{\delta^2}{\sigma R_\delta(0)}<\frac{1}{R_\delta(0)}.
\]
The proof is completed.
\end{proof}
%
We next consider the different expression of $s_l(\delta)$.
For $0 \leq l  \leq n_\sigma(\delta)$ with $\sigma\in [0,1]$, we set 
\[
c_l=c_l(\delta):=
\begin{cases}
0 & \text{\ if\ }l=0,\\
\displaystyle
\frac{1+\delta \sum_{m=0}^{l-1}c_m^2}{\sqrt{1-\delta^2 \sum_{m=0}^{l-1}c_m^2}} & \text{\ if\ } 1\leq l \leq n_1(\delta).
\end{cases}
\]
\begin{lem}\label{lem:rel}
For $0 \leq l  \leq n_\sigma(\delta)$ with $\sigma\in [0,1]$, we have 
\[
\delta^2 \sum_{m=0}^{l}c_m^2=s_l(\delta).
\]
In particular, the denominator of $c_l$ does not vanish and 
$c_l$ is well-defined.
\end{lem}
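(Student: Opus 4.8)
The plan is to establish the identity $\delta^2 \sum_{m=0}^{l} c_m^2 = s_l(\delta)$ by induction on $l$, treating the well-definedness of $c_l$ as part of the inductive statement. The base case $l = 0$ is immediate, since $c_0 = 0$ and $s_0(\delta) = 0$ make both sides vanish.

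For the inductive step, fix $l$ with $1 \le l \le n_\sigma(\delta)$ and suppose that $c_0,\dots,c_{l-1}$ are well-defined and that $\delta^2 \sum_{m=0}^{l-1} c_m^2 = s_{l-1}(\delta)$. First I would check that $c_l$ is well-defined: the sequence $\{s_k(\delta)\}$ is strictly increasing on the range where it is defined (each increment equals $\varphi_\delta(\cdot) > 0$), and by the definition of $n_\sigma(\delta)$ one has $s_k(\delta) < \sigma \le 1$ for $0 \le k \le n_\sigma(\delta)-1$; in particular $s_{l-1}(\delta) < 1$, so by the inductive hypothesis $1 - \delta^2 \sum_{m=0}^{l-1} c_m^2 = 1 - s_{l-1}(\delta) > 0$ and the radicand in the defining formula for $c_l$ is strictly positive. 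Then, inserting $\delta \sum_{m=0}^{l-1} c_m^2 = \delta^{-1} s_{l-1}(\delta)$ into that formula gives
\[
c_l = \frac{1 + \delta^{-1} s_{l-1}(\delta)}{\sqrt{1 - s_{l-1}(\delta)}} = \frac{\delta + s_{l-1}(\delta)}{\delta\,\sqrt{1 - s_{l-1}(\delta)}},
\]
so that $\delta^2 c_l^2 = (\delta + s_{l-1}(\delta))^2/(1 - s_{l-1}(\delta)) = \varphi_\delta(s_{l-1}(\delta))$. Adding this to the inductive hypothesis,
\[
\delta^2 \sum_{m=0}^{l} c_m^2 = \delta^2 \sum_{m=0}^{l-1} c_m^2 + \delta^2 c_l^2 = s_{l-1}(\delta) + \varphi_\delta(s_{l-1}(\delta)) = s_l(\delta),
\]
which closes the induction and proves the identity for all $0 \le l \le n_\sigma(\delta)$.

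I do not expect a real obstacle: once the induction is set up, the computation is a single algebraic rearrangement. The only point deserving explicit attention --- and the reason to run the identity and the well-definedness claim through a single induction rather than separately --- is the positivity of $1 - \delta^2 \sum_{m=0}^{l-1} c_m^2$, which is where the cutoff in the definition of $n_1(\delta)$ (one past the last index with $s_n(\delta) < 1$) together with the monotonicity of $\{s_k(\delta)\}$ enters.
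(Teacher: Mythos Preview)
Your proof is correct and follows essentially the same induction as the paper's proof; the only difference is that you spell out the positivity of $1 - s_{l-1}(\delta)$ (and hence the well-definedness of $c_l$) explicitly, whereas the paper leaves this implicit.
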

\begin{proof}
We prove the lemma by induction on $l$.
In the case of $l=0$, the statement is true since $c_0=0=s_0$.
Assume that the statement is true for $l-1$.
We then have 
\begin{align*}
\delta^2 \sum_{m=0}^{l}c_m^2
&=\delta^2 \sum_{m=0}^{l-1}c_m^2 +\delta^2 c_l^2
=\delta^2 \sum_{m=0}^{l-1}c_m^2 +\frac{(\delta +s_{l-1})^2}{1-s_{l-1}}
=s_{l-1}+\varphi_\delta(s_{l-1})
=s_l,
\end{align*}
ensuring the statement  for $l$.  This completes the proof.
\end{proof}
\begin{thm}\label{lem:1}
For any  $\sigma \in [0,1], n \leq n_\sigma(\theta(\varepsilon))$, we have 
\[
L(n,\varepsilon,\theta)^2=\sup_{W \in U^F(n)_{\varepsilon,\theta}}\min_{Q' \in U^F(n)}\|W-Q'\|^2
<
\left(n \varepsilon^2+ \frac{2n(1+\varepsilon)\sigma}{1+ \sqrt{1-\sigma}}\right).
\]
\end{thm}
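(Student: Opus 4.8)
The plan is to exhibit, for each $W=(z_1,\dots,z_n)\in U^F(n)_{\varepsilon,\theta}$, an explicit unitary $Q'\in U^F(n)$ close to $W$ — namely the matrix of the Gram--Schmidt orthonormalization of the columns of $W$ — and then to estimate $\|W-Q'\|^2$ column by column. Since $\min_{Q''\in U^F(n)}\|W-Q''\|^2\le\|W-Q'\|^2$, no minimality of the polar factor (Lemma~\ref{lem:nearest}) is needed. Put $\delta:=\theta(\varepsilon)$ and $\hat z_l:=z_l/\|z_l\|$, and define inductively $\tilde z_1:=\hat z_1$, $u_m:=\tilde z_m/\|\tilde z_m\|$, and $\tilde z_l:=\hat z_l-\sum_{m<l}u_m\lr{u_m}{\hat z_l}$ for $l\ge2$, with the scalars $\lr{u_m}{\hat z_l}\in F$ multiplied on the \emph{right}; using $\lr{Zc}{W}=c^\ast\lr{Z}{W}$ and $\lr{u_j}{u_m c}=\delta_{jm}c$ for $c\in F$, one checks that $u_1,\dots,u_n$ are orthonormal, so $Q':=(u_1,\dots,u_n)\in U^F(n)$. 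Set $\tau_l:=\|\tilde z_l\|$ and $b_{m,l}:=\|\lr{u_m}{\hat z_l}\|$ for $m<l$. Decomposing $z_l-u_l$ into the part proportional to $\tilde z_l$ and the part in $\mathrm{span}(u_1,\dots,u_{l-1})$, which are orthogonal, a direct computation gives
\[
\tau_l^2=1-\sum_{m<l}b_{m,l}^2,\qquad
\|z_l-u_l\|^2=(\|z_l\|-1)^2+2\|z_l\|(1-\tau_l).
\]

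The core step is the estimate $b_{m,l}\le\delta\,c_m(\delta)$ for all $1\le m<l\le n$, with $c_m(\delta)$ as in Lemma~\ref{lem:rel}, proved by induction on $m$. From $\tilde z_m=\hat z_m-\sum_{k<m}u_k\lr{u_k}{\hat z_m}$ one gets $\lr{\tilde z_m}{\hat z_l}=\lr{\hat z_m}{\hat z_l}-\sum_{k<m}\lr{u_k}{\hat z_m}^\ast\lr{u_k}{\hat z_l}$, so the Cauchy--Schwarz inequality, the defining bound $\|\lr{\hat z_m}{\hat z_l}\|<\delta$ of $U^F(n)_{\varepsilon,\theta}$, and the induction hypothesis give
\[
b_{m,l}=\frac{\|\lr{\tilde z_m}{\hat z_l}\|}{\tau_m}\le\frac{1}{\tau_m}\Bigl(\delta+\delta^2\sum_{k=0}^{m-1}c_k(\delta)^2\Bigr).
\]
Moreover $\tau_m^2=1-\sum_{k<m}b_{k,m}^2\ge1-\delta^2\sum_{k=0}^{m-1}c_k(\delta)^2=1-s_{m-1}(\delta)$ by Lemma~\ref{lem:rel}, and the hypothesis $n\le n_\sigma(\delta)$ forces $s_{m-1}(\delta)\le s_{n_\sigma(\delta)-1}(\delta)<\sigma\le1$, so $\tau_m$ is positive and $\tau_m\ge\sqrt{1-\delta^2\sum_{k=0}^{m-1}c_k(\delta)^2}$. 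Substituting, the displayed upper bound for $b_{m,l}$ becomes precisely $\delta\,c_m(\delta)$, closing the induction; the same computation shows $\tilde z_m\ne0$ for $m\le n$, so $Q'$ is well defined. Hence $\sum_{m<l}b_{m,l}^2\le\delta^2\sum_{m=0}^{l-1}c_m(\delta)^2=s_{l-1}(\delta)$, i.e.\ $\tau_l\ge\sqrt{1-s_{l-1}(\delta)}$.

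To finish, write $\sigma_0:=s_{n_\sigma(\delta)-1}(\delta)$, so $\sigma_0<\sigma$. For every $l\le n$ we have $s_{l-1}(\delta)\le s_{n-1}(\delta)\le\sigma_0$, and since $x\mapsto1-\sqrt{1-x}$ is increasing on $[0,1]$ this yields $1-\tau_l\le1-\sqrt{1-\sigma_0}$. Combined with $(\|z_l\|-1)^2<\varepsilon^2$ and $\|z_l\|<1+\varepsilon$, the identity for $\|z_l-u_l\|^2$ gives $\|z_l-u_l\|^2\le\varepsilon^2+2(1+\varepsilon)\bigl(1-\sqrt{1-\sigma_0}\bigr)$ for each $l$, hence
\[
\min_{Q''\in U^F(n)}\|W-Q''\|^2\le\|W-Q'\|^2=\sum_{l=1}^n\|z_l-u_l\|^2\le n\varepsilon^2+2n(1+\varepsilon)\bigl(1-\sqrt{1-\sigma_0}\bigr),
\]
a bound independent of $W$. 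Taking the supremum over $W\in U^F(n)_{\varepsilon,\theta}$ and using $\sigma_0<\sigma$ together with the strict monotonicity of $x\mapsto1-\sqrt{1-x}$ and the identity $1-\sqrt{1-\sigma}=\sigma/(1+\sqrt{1-\sigma})$ yields
\[
L(n,\varepsilon,\theta)^2\le n\varepsilon^2+2n(1+\varepsilon)\bigl(1-\sqrt{1-\sigma_0}\bigr)<n\varepsilon^2+\frac{2n(1+\varepsilon)\sigma}{1+\sqrt{1-\sigma}},
\]
as claimed.

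The main obstacle is the inductive estimate $b_{m,l}\le\delta\,c_m(\delta)$. It needs both the correct right-multiplication convention in the Gram--Schmidt process so that orthogonality genuinely holds over the non-commutative $\H$, and the hypothesis $n\le n_\sigma(\delta)$, which is exactly what keeps every denominator $\tau_m$ bounded away from $0$ along the recursion (equivalently, keeps all partial sums $\delta^2\sum_{k=0}^{m-1}c_k(\delta)^2$ strictly below $1$). Once this is in place, the strict inequality in the conclusion comes for free from $s_{n_\sigma(\delta)-1}(\delta)<\sigma$.
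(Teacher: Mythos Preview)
Your proof is correct and follows essentially the same approach as the paper. The paper phrases the construction via a unitary change of basis $P$ making the normalized columns upper-triangular (a QR-type step) and then takes the diagonal directions, whereas you run Gram--Schmidt directly; but your $u_l$ coincides with the paper's $l$-th column of $P'$, your inductive bound $b_{m,l}\le\delta\,c_m(\delta)$ is exactly the paper's claim $\|\zeta^m_l\|<\theta(\varepsilon)c_m$, and the column-wise identity $\|z_l-u_l\|^2=\|z_l\|^2+1-2\|z_l\|\tau_l$ matches the paper's computation verbatim.
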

\begin{proof}
For any $Z=(z_1,\ldots, z_n) \in U^F(n)_{\varepsilon, \theta}$, 
there exist 
$P \in U^F(n)$ and 
$(\zeta_1,\ldots, \zeta_n) \in U^F(n)_{\varepsilon,\theta}$ with $\|\zeta_l\|=1$ 
such that 
\[
P z_l=\|z_l\|\zeta_l,\quad
\zeta_1=e_1, \quad
\zeta^m_l:=\lr{e_m}{\zeta_l}=0 \quad \text{if $m>l$}.
\]
\begin{clm}
For $1 \leq m\leq n_\sigma(\theta(\varepsilon))$, 
if $m < l \leq n_\sigma(\theta(\varepsilon))$,
then $\|\zeta^m_l\|< \theta(\varepsilon) c_m(\theta(\varepsilon))$.
\end{clm}
We prove the claim by induction on $m$.
If $m=1$, 
we then have $c_1(\theta(\varepsilon))=1$ and, for $l \geq 2$,
\[
\|\zeta^1_l\|
=\|\lr{e_1}{\zeta_l}\|
=\left\| \left\langle \frac{z_1}{\|z_1\|}, \frac{z_l}{\|z_l\|} \right\rangle \right\|
< \theta(\varepsilon) =\theta(\varepsilon) c_1(\theta(\varepsilon)).
\]
%
%

Let us  assume that 
the claim holds true for any $1, \ldots m-1$ with $m<n_\sigma(\theta(\varepsilon))$.
We derive from the assumption and Lemma \ref{lem:rel} that
\[
\|\zeta^m_m\|^2
=\|\zeta_m\|^2-\sum_{\alpha=1}^{m-1} \|\zeta^\alpha_m\|^2
>1-\theta(\varepsilon)^2\sum_{\alpha=1}^{m-1} c_\alpha^2=1-s_{m-1} > 1-\sigma \ge 0,
\]
providing $\zeta^m_m \neq 0$.
Since for any $l>m$ we have
\[
\lr{\zeta_m}{\zeta_l}
=\sum_{\alpha=1}^m (\zeta^\alpha_m)^\ast \zeta^\alpha_l 
=\sum_{\alpha=1}^{m-1} (\zeta^\alpha_m)^\ast \zeta^\alpha_l+(\zeta^m_m)^\ast \zeta^m_l, \quad
\|\lr{\zeta_m}{\zeta_l}\|<\theta(\varepsilon)
\]
and 
the inverse of $(\zeta^m_m)^\ast$ is $\zeta^m_m/\|\zeta^m_m\|^2$, 
we deduce from the claim for $1, \ldots, m-1$ that 
\begin{align*}
\|\zeta^m_l\|
&=
\left\|\frac{\zeta^m_m}{\|\zeta^m_m\|^2}\cdot \left(\lr{\zeta_m}{\zeta_l}-\sum_{\alpha=1}^{m-1} (\zeta^\alpha_m)^\ast \zeta^\alpha_l \right)\right\| 
\leq 
\frac{1}{\|\zeta^m_m\|} \left( \theta(\varepsilon)+ \sum_{\alpha=1}^{m-1} \|\zeta^\alpha_m\| \|\zeta^\alpha_l \|  \right) \\
&<
\frac{ \theta(\varepsilon)+ \theta(\varepsilon) \sum_{\alpha=1}^{m-1} c_\alpha \|\zeta^\alpha_m\| }{\sqrt{1-\sum_{\alpha=1}^{m-1} \|\zeta^\alpha_m\|^2}} 
<
\frac{ \theta(\varepsilon)  \left(1 + \theta(\varepsilon) \sum_{\alpha=1}^{m-1} c_\alpha^2  \right)}{\sqrt{1- \theta(\varepsilon)^2 \sum_{\alpha=1}^{m-1} c_\alpha^2}}  
=\theta(\varepsilon) c_m, 
\end{align*}
where the last inequality follows from 
the monotone increasing property of each varieties of
\[
(r_1,\ldots,r_{m-1}) \mapsto \frac{1}{\sqrt{1-\sum_{\alpha=1}^{m-1} r_\alpha^2 }} \left(\theta(\varepsilon) +\theta(\varepsilon) \sum_{\alpha=1}^{m-1}c_\alpha r_\alpha \right).
\]
This completes the proof of the claim.

It thus holds for any $1 \leq l \leq n_\sigma(\theta(\varepsilon))$ that 
\[
\|\zeta^l_l\|
=\sqrt{1-\sum_{m=1}^{l-1} \|\zeta^m_l\|^2}>\sqrt{1-\theta(\varepsilon)^2\sum_{m=1}^{l-1} c_m^2(\theta(\varepsilon))}
=\sqrt{1-s_{l-1}(\theta(\varepsilon))}>0.
\]
Setting 
\[
P':= P^\ast  \cdot  \left(\frac{\zeta^1_1}{\|\zeta^1_1\|}e_1, \ldots, \frac{\zeta^n_n}{\|\zeta^n_n\|}e_n\right) \in U^{F}(n),
\]
we see that 
\begin{align*}
\min_{Q \in U^F(n)} \|Z-Q\|^2
&\leq \|Z-P'\|^2
=\sum_{l=1}^n \left\| \|z_l\| \zeta_l- \frac{\zeta^l_l}{\|\zeta^l_l\|}e_l \right\|^2 
=\sum_{l=1}^n \left( \|z_l\|^2 +1-2\|z_l\| \|\zeta^l_l\| \right) \\
&<
\sum_{l=1}^n \left( (\|z_l\|-1)^2+ 2\|z_l\| -2\|z_l\| \sqrt{1-s_{l-1}(\theta(\varepsilon))} \right) \\
&< \sum_{l=1}^n \left( \varepsilon^2+2(1+\varepsilon)(1-\sqrt{1-\sigma} )\right)
= \left(n \varepsilon^2+ \frac{2n(1+\varepsilon)\sigma}{1+ \sqrt{1-\sigma}}\right).
\end{align*}
By the arbitrariness of $Z \in U^F(n)_{\varepsilon, \theta}$,
the proof of the lemma is complete.
\end{proof}
\begin{rem}\label{rem:orth}
In Theorem \ref{lem:1}, 
we estimate $L(n,\varepsilon,\theta)^2$ from above by the sum of two terms:
the first one is due to the error of the lengths between each column vectors $Z \in U^F(n)_{\varepsilon,\theta}$ and $Q^{F}(Z)$, 
and the second one is due to the error of the angles between each column vectors $Z$ and $Q^{F}(Z)$.
If $n_\sigma(\theta(\varepsilon)) \to \infty$ as $\varepsilon \to 0$, then 
we require $\theta(\varepsilon)^2<\sigma$ by Lemma \ref{lem:above}.
Therefore the second term is larger than the first one if $\varepsilon\leq \theta(\varepsilon)$, 
which holds for the $\theta$ defined in \eqref{theta}.
This is according to the fact that  
the condition $L(n,\varepsilon,\theta)<1$ guarantees that 
the rank of any elements in $U^F(n)_{\varepsilon,\theta}$ equals to $n$, 
and 
the rank of a matrix is stable to the perturbation on the lengths of their column vectors,
but extremely frail against the perturbation on the angles between their column vectors.
Indeed, for sufficiently large $n$, 
there exists $\{x_l\}_{l=1}^{2n} \subset \R^n$ such that 
each angle between any two distinct vectors is close to $\pi/2$ (cf.~\cite{CFG}*{Theorem 6}).
\end{rem}
\subsection{ $\mu^{N,n,F}$ as a push forward measure of $\gamma^{N^F n}$}\label{ssc:push}
\begin{lem}\label{lem:push}
We have 
\[
\mu^{N,n,F}=(\Phi^{N,n,F}_{\varepsilon,\theta})_\# \omega^{N,n,F}_{\varepsilon,\theta}, \quad
\text{where}\quad\omega^{N,n,F}_{\varepsilon,\theta}:=
\frac{\gamma^{N^{F}n}|_{X^F_{N,n,\varepsilon,\theta}}}{\gamma^{N^{F}n}(X^F_{N,n,\varepsilon,\theta})}.
\]
\end{lem}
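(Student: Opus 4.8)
The plan is to identify $(\Phi^{N,n,F}_{\varepsilon,\theta})_\#\omega^{N,n,F}_{\varepsilon,\theta}$ via the uniqueness characterization of the Haar measure. Recall (the remark following the definition of $X_{N,n}^F$, which is the scaled form of Proposition \ref{prop:haar}) that $\mu^{N,n,F}$ is the \emph{unique} Borel probability measure on $X_{N,n}^F$ that is left-invariant under the $U^F(N)$-action $\mathcal{U}^U_n$, $U\in U^F(N)$. Hence it suffices to show that $(\Phi^{N,n,F}_{\varepsilon,\theta})_\#\omega^{N,n,F}_{\varepsilon,\theta}$ is a well-defined Borel probability measure on $X_{N,n}^F$ invariant under every $\mathcal{U}^U_n$.

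First I would dispose of the well-definedness. Throughout we assume $L(n,\varepsilon,\theta)<1$, as required for $\Phi^{N,n,F}_{\varepsilon,\theta}$ to be defined at all (Lemma \ref{lem:lip}); then Corollary \ref{prop:l} gives $\lambda_n(Z)>0$ for every $Z\in X^F_{N,n,\varepsilon,\theta}$, so the polar decomposition of such $Z$ is unique with $Q^F(Z)=Z(Z^\ast Z)^{-1/2}$, depending continuously on $Z$; thus $\Phi^{N,n,F}_{\varepsilon,\theta}$ is continuous, hence Borel measurable. The set $X^F_{N,n,\varepsilon,\theta}$ is open and contains $X_{N,n}^F$ (on $X_{N,n}^F$ one has $\|z_l\|=\sqrt{N^F-1}\in A^{N^F}_\varepsilon$ and $\lr{z_l/\|z_l\|}{z_m/\|z_m\|}=0<\theta(\varepsilon)$ for $l<m$), so $\gamma^{N^F n}(X^F_{N,n,\varepsilon,\theta})>0$ and $\omega^{N,n,F}_{\varepsilon,\theta}$ is a genuine Borel probability measure; consequently so is its push-forward, which lands in $X_{N,n}^F$ by construction of $\Phi^{N,n,F}_{\varepsilon,\theta}$.

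The substance of the proof is the invariance. Fix $U\in U^F(N)$, viewed inside $U^\R(N^F)$, so that $\mathcal{U}^U_n$ is an $\R$-linear isometry of $(\R^{N^F n},\|\cdot\|)$ and therefore preserves $\gamma^{N^F n}$. Since $U$ is unitary, $\|Uz_l\|=\|z_l\|$ and $\lr{Uz_l}{Uz_m}=\lr{z_l}{z_m}$, so $\mathcal{U}^U_n$ maps $X^F_{N,n,\varepsilon,\theta}$ onto itself; combining these two facts, $\mathcal{U}^U_n$ preserves the normalized restriction $\omega^{N,n,F}_{\varepsilon,\theta}$. Next, singular values are unitary-invariant, so $\lambda_n(UZ)=\lambda_n(Z)>0$ and by Remark \ref{spectrum}(1) the (again unique) polar decomposition of $UZ$ is $(UQ^F(Z))H$; hence $Q^F(UZ)=UQ^F(Z)$, i.e. $\Phi^{N,n,F}_{\varepsilon,\theta}\circ\mathcal{U}^U_n=\mathcal{U}^U_n\circ\Phi^{N,n,F}_{\varepsilon,\theta}$ on $X^F_{N,n,\varepsilon,\theta}$. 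Equivariance gives $(\Phi^{N,n,F}_{\varepsilon,\theta})^{-1}(\mathcal{U}^U_n(B))=\mathcal{U}^U_n((\Phi^{N,n,F}_{\varepsilon,\theta})^{-1}(B))$ for every Borel $B\subset X_{N,n}^F$, and plugging this into the definition of push-forward together with the $\mathcal{U}^U_n$-invariance of $\omega^{N,n,F}_{\varepsilon,\theta}$ yields $(\Phi^{N,n,F}_{\varepsilon,\theta})_\#\omega^{N,n,F}_{\varepsilon,\theta}(\mathcal{U}^U_n(B))=(\Phi^{N,n,F}_{\varepsilon,\theta})_\#\omega^{N,n,F}_{\varepsilon,\theta}(B)$. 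By the uniqueness of the $U^F(N)$-invariant probability measure on $X_{N,n}^F$, the push-forward equals $\mu^{N,n,F}$.

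I do not anticipate a serious obstacle: this is a clean uniqueness argument, and the only points demanding care are (a) that $Q^F$ is single-valued on $X^F_{N,n,\varepsilon,\theta}$, which is guaranteed by $L(n,\varepsilon,\theta)<1$ via Corollary \ref{prop:l}, and (b) that, since we push forward a \emph{normalized restriction} of $\gamma^{N^F n}$ rather than $\gamma^{N^F n}$ itself, one must check invariance of the set $X^F_{N,n,\varepsilon,\theta}$ under $\mathcal{U}^U_n$ and not merely of the measure — both of which follow at once from unitarity.
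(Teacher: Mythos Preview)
Your proof is correct and follows essentially the same approach as the paper: both show that the push-forward is $U^F(N)$-left-invariant by combining the $\mathcal{U}^U_n$-invariance of $X^F_{N,n,\varepsilon,\theta}$ and of $\gamma^{N^F n}$ with the equivariance of $\Phi^{N,n,F}_{\varepsilon,\theta}$ (via Remark \ref{spectrum}(1)), and then invoke the uniqueness in Proposition \ref{prop:haar}. You add some care about well-definedness (measurability of $\Phi$, positivity of the normalizing constant) that the paper leaves implicit, but the substance is the same.
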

\begin{proof}
%
%
For any $U \in U^F(n)$, 
it is easy to check that $\mathcal{U}_{n}^U$ commutes with the $\R$-multiplication
and we have already seen the commutativity of $\mathcal{U}_{n}^U$ with $Q^F$ in Remark \ref{spectrum}(1).
This means that $\mathcal{U}_{n}^{U^\ast}$ commutes with $\Phi^{N,n,F}_{\varepsilon,\theta}$.
In addition,  the isometric property of $\mathcal{U}_m^U$ enables us to regard $\mathcal{U}_{m}^U \in U^\R(N^F m)$.
We also have 
$X^F_{N,n,\varepsilon,\theta}=\mathcal{U}_{n}^U(X^F_{N,n,\varepsilon,\theta})$.
These facts with the $U^\R(N^F n)$-invariance of $\gamma^{N^F n}$ together yield that for any Borel subset $B \subset X^F_{N,n,\varepsilon,\theta}$,
\begin{align*}
\gamma^{N^F n}(X^F_{N,n,\varepsilon,\theta})\cdot (\Phi^{N,n,F}_{\varepsilon,\theta})_\# \omega^{N,n,F}_{\varepsilon,\theta} (\mathcal{U}_n^U (B)) 
&=\gamma^{N^F n} (X^F_{N,n,\varepsilon,\theta}\cap (\Phi^{N,n,F}_{\varepsilon,\theta})^{-1} (\mathcal{U}_{n}^U(B)) \\
&= \gamma^{N^F n} (\mathcal{U}_{n}^U(X^F_{N,n,\varepsilon,\theta})\cap (\mathcal{U}_{n}^{U^\ast} \circ \Phi^{N,n,F}_{\varepsilon,\theta})^{-1} (B))\\
&=\gamma^{N^F n} (\mathcal{U}_{n}^U ( X^F_{N,n,\varepsilon,\theta} \cap (\Phi^{N,n,F}_{\varepsilon,\theta})^{-1} (B) ) ) \\
&=\gamma^{N^F n} ( X^F_{N,n,\varepsilon,\theta} \cap (\Phi^{N,n,F}_{\varepsilon,\theta})^{-1} (B) ) \\
&=\gamma^{N^F n} \cdot  (\Phi^{N,n,F}_{\varepsilon,\theta})_\#\omega^{N,n,F}_{\varepsilon,\theta} (B).
\end{align*} 
By Proposition \ref{prop:haar}, 
this completes the proof of the lemma.
\end{proof}
\subsection{Convergence of pyramids of (projective) Stiefel and Grassmann manifolds}
\begin{lem} \label{lem:Lip-const}
Let $\theta$ be a function defined in \eqref{theta}.
If $\lim_{N \to \infty}n_N/(N-1)^3=0$,
then the smallest Lipschitz constant of $\Phi^{N,n_N,F}_{\varepsilon,\theta}$ tends to $1$ as 
$N \to \infty$.
\end{lem}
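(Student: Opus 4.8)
The plan is to reduce the assertion to the single estimate $\lim_{N\to\infty}L(n_N,\varepsilon_N,\theta)=0$, where $L$ is the quantity of Corollary \ref{prop:l} and $\varepsilon_N$ denotes the approximation parameter (written $\varepsilon$ in the statement). Indeed, by Lemma \ref{lem:lip}, once $L(n_N,\varepsilon_N,\theta)<1$ the map $\Phi^{N,n_N,F}_{\varepsilon_N,\theta}$ has Lipschitz constant at most $(1-L(n_N,\varepsilon_N,\theta))^{-1}$; on the other hand $X^F_{N,n_N}\subset X^F_{N,n_N,\varepsilon_N,\theta}$, and the polar decomposition of $Z\in X^F_{N,n_N}$ is $Z=(Z/\sqrt{N^F-1})\cdot(\sqrt{N^F-1}\,I_{n_N})$, so there $Q^F$ is $Z\mapsto Z/\sqrt{N^F-1}$ and $\Phi^{N,n_N,F}_{\varepsilon_N,\theta}$ restricts to the identity on $X^F_{N,n_N}$. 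Hence its smallest Lipschitz constant lies in $[\,1,(1-L(n_N,\varepsilon_N,\theta))^{-1}\,]$ as soon as $L(n_N,\varepsilon_N,\theta)<1$, and it suffices to prove $L(n_N,\varepsilon_N,\theta)\to0$.

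To obtain this I would feed Theorem \ref{lem:1} a carefully chosen sequence $\sigma_N\in[\,0,1\,]$ with $n_N\le n_{\sigma_N}(\theta(\varepsilon_N))$, which gives
\[
L(n_N,\varepsilon_N,\theta)^2 < n_N\varepsilon_N^2+\frac{2n_N(1+\varepsilon_N)\sigma_N}{1+\sqrt{1-\sigma_N}}\le n_N\varepsilon_N^2+2n_N(1+\varepsilon_N)\sigma_N ,
\]
so it remains to arrange $n_N\varepsilon_N^2\to0$ and $n_N\sigma_N\to0$. By the definition of $n_\sigma$ preceding Lemma \ref{lem:itere}, the admissibility condition $n_N\le n_\sigma(\theta(\varepsilon_N))$ is equivalent to $s_{n_N-1}(\theta(\varepsilon_N))<\sigma$; since $s_l-s_{l-1}=(\theta(\varepsilon_N)+s_{l-1})^2/(1-s_{l-1})\ge\theta(\varepsilon_N)^2$ and $\theta(\varepsilon_N)^2$ is comparable to $\varepsilon_N^2$ for small $\varepsilon_N$ (by \eqref{theta}), we get $\sigma_N>(n_N-1)\theta(\varepsilon_N)^2\gtrsim(n_N-1)\varepsilon_N^2$, whence $n_N\sigma_N\gtrsim(n_N\varepsilon_N)^2$. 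Thus $n_N\varepsilon_N\to0$ is unavoidable; this is precisely what the hypothesis $n_N/(N-1)^3\to0$ secures through the defining form of $\varepsilon_N$ and the limits collected in \eqref{eq:lim}, and it also gives $n_N\varepsilon_N^2\le\varepsilon_N\cdot n_N\varepsilon_N\to0$ since $\varepsilon_N\to0$.

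Conversely, once $n_N\varepsilon_N\to0$ (hence $n_N\theta(\varepsilon_N)\to0$) is in hand, I would take $\sigma_N:=2n_N\theta(\varepsilon_N)^2$, which tends to $0$ and so lies in $[\,0,1\,]$ for large $N$. To verify $n_N\le n_{\sigma_N}(\theta(\varepsilon_N))$ I would invoke the lower bound of Lemma \ref{lem:itere}: with $R:=R_{\theta(\varepsilon_N)}(\sigma_N)$ one has $R\le 3\theta(\varepsilon_N)$ and $n_NR\to0$, so for all large $N$
\[
(1+R)^{n_N}\le e^{n_NR}\le 1+2n_NR=1+R_{\theta(\varepsilon_N)}(\sigma_N)\,\frac{\sigma_N}{\theta(\varepsilon_N)^2},
\]
which makes the lower bound of Lemma \ref{lem:itere} at least $n_N$. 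Then $n_N\sigma_N=2n_N^2\theta(\varepsilon_N)^2=O((n_N\varepsilon_N)^2)\to0$ by \eqref{theta}, and substituting into the displayed inequality yields $L(n_N,\varepsilon_N,\theta)^2\to0$. Together with the first paragraph this proves the lemma.

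The hard part is the balancing act of the second and third paragraphs: $\sigma_N$ must be chosen only marginally above $s_{n_N-1}(\theta(\varepsilon_N))$ — small enough that $n_N\sigma_N\to0$ is preserved, yet large enough that the nonlinear recursion $s_l=s_{l-1}+\varphi_{\theta(\varepsilon_N)}(s_{l-1})$ still reaches it within $n_N-1$ steps, which is exactly the content of Lemmas \ref{lem:itere} and \ref{lem:above} — and to dovetail this with the decay rate of $\varepsilon_N$ encoded in the hypothesis $n_N/(N-1)^3\to0$. Everything else, namely the reduction via Lemma \ref{lem:lip} and the two elementary limits $n_N\varepsilon_N^2\to0$ and $n_N\sigma_N\to0$, is routine once $n_N\varepsilon_N\to0$ is established.
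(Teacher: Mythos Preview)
Your argument is correct and follows the same skeleton as the paper: reduce to $L(n_N,\varepsilon_N,\theta)\to0$ via Lemma~\ref{lem:lip}, then feed Theorem~\ref{lem:1} a sequence $\sigma_N$ with $n_N\le n_{\sigma_N}(\theta(\varepsilon_N))$ verified through Lemma~\ref{lem:itere}, and finish by checking $n_N\varepsilon_N^2\to0$ and $n_N\sigma_N\to0$.

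The only substantive difference is the choice of $\sigma_N$. The paper introduces an auxiliary exponent $q_N=\tfrac{2}{1+p_N}(p_N+\tfrac13)$ and sets $\sigma_N=\theta_N^{2-q_N}$, then proves the two-sided Claim $n_N\le\sigma_N/(2\theta_N^2)\le n_{\sigma_N}(\theta_N)$; this is more computational. Your direct choice $\sigma_N=2n_N\theta_N^2$ is cleaner: it makes $\sigma_N/\theta_N^2=2n_N$ exactly, so the lower bound of Lemma~\ref{lem:itere} reduces to the elementary inequality $(1+R)^{n_N}\le 1+2n_NR$ for $n_NR$ small, and $n_N\sigma_N=2(n_N\theta_N)^2\to0$ is immediate. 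You also supply the lower bound $\ge1$ for the Lipschitz constant (by observing $\Phi$ restricts to the identity on $X^F_{N,n_N}$), which the paper leaves implicit. One cosmetic point: the hypothesis as printed, $n_N/(N-1)^3\to0$, is trivially true; what is actually used (in both the paper's proof and yours, via the form of $\varepsilon_N$ and \eqref{eq:lim}) is $n_N^3/(N-1)\to0$, i.e.\ $n_N\varepsilon_N=(N-1)^{(3p_N-1)/4}\to0$.
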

\begin{proof}
If $L(n_{N}, \varepsilon_N, \theta) <1$, then 
$\Phi^{N,n,F}_{\varepsilon,\theta}$ is well-defined and 
has Lipschitz constant at most $(1-L(n_{N}, \varepsilon_N, \theta))^{-1}$
due to Lemma \ref{lem:lip}.
It thus suffices to prove $\lim_{N\to0}L(n_{N}, \varepsilon_N, \theta) =0$.

We use the same notations in Theorem \ref{thm:fullmeas}
as follows:
\[
p_N:=\log_{(N-1)}n_N, \quad
a_N:=\frac14(1+p_N),\quad
\varepsilon_N:=(N-1)^{-a_N}.
\]
We moreover define 
\[
\theta_N:=\theta(\varepsilon_N), \quad
q_N:=\frac{2}{1+p_N}\left(p_N+\frac13\right), \quad
\sigma_N:=\theta_N^{2-q_N}. 
\]
It then holds that
\begin{align*}
&\varepsilon_N \leq \theta_N \leq 3\varepsilon_N, \quad
q_N \leq 1, \quad
a_N(q_N-1)
=\frac{1}{4}\left(p_N-\frac13\right), \quad
a_N q_N=\frac{1}{2}\left(p_N+\frac13\right), \\
&
\lim_{N \to \infty} n_N \varepsilon_N^2
=\lim_{N \to \infty} (N-1)^{p_N-2a_N} 
=\lim_{N \to \infty} (N-1)^{(p_N-1)/2}=0,\\
&\lim_{N \to \infty} \frac{\sigma_N}{\theta_N}
=\lim_{N \to \infty} \theta_N^{1-q_N} 
\leq \lim_{N \to \infty} 3^{1-q_N} \varepsilon_N^{1-q_N}
\leq \lim_{N \to \infty} 3(N-1)^{a_N(q_N-1)}=0,
\end{align*}
where we use \eqref{eq:lim} in the last inequality.
\begin{clm}\label{clm}
For large enough $N$, we have
 \[
n_{N} \leq \frac{\sigma_{N}}{2\theta_{N}^2} \leq n_{\sigma_{N}}(\theta_{N}).
\]
\end{clm}
The first inequality follows from 
\begin{gather*}
 n_N \cdot \frac{\theta_N^2}{\sigma_N}
= n_N \theta_N^{q_N}
\leq  3^{q_N} n_N \varepsilon_N^{q_N}
\leq  3(N-1)^{p_N-a_Nq_N}
= 3(N-1)^{(3p_N-1)/6},
\end{gather*}
hence $\limsup_{N \to \infty} n_N \cdot \theta_N^2/\sigma_N =0$.
We derive the second inequality from Lemma \ref{lem:itere} and
\begin{align*}
\lim_{N \to \infty} \frac{1}{R_{\sigma_N}(\theta_N)}
&=\lim_{N \to \infty} \frac{\theta_N^2}{\sigma_NR_{\sigma_N}(\theta_N)}=\infty,\\
\lim_{N \to \infty}(1+R_{\sigma_N}(\theta_N))^{1/2R_{\sigma_N}(\theta_N)}
&=e^{1/2}<e^1
=\lim_{N \to \infty}\left(1+R_{\sigma_N}(\theta_N)\frac{\sigma_N}{\theta_N^2} \right)^{{\theta_N^2}/{\sigma_N}R_{\sigma_N}(\theta_N)}.
%
\end{align*}
The claim has been proved.

The claim implies that 
\begin{gather*}
\lim_{N \to \infty} n_N \sigma_N
\leq 
\lim_{N \to \infty}\frac{\sigma_N^2}{2\theta_N^2}=0.
\end{gather*}
%
For large enough $N$, we have, by  Theorem \ref{lem:1},
\[
L(n_{N}, \varepsilon_N, \theta)^2<
n_N \varepsilon_N^2+
\frac{2n_N(1+\varepsilon_N)\sigma_{N}}{1+\sqrt{1-\sigma_{N}}}, 
\]
which proves $\lim_{N \to \infty} L_N=0$ as desired.
This completes the proof.
\end{proof}

\begin{proof}[Proof of Theorems \ref{thm:Stiefel} and \ref{thm:Gr-pS}]
We set 
\[
X_N = X_{N,n_N}^F, \quad
Y_N = \Gamma^{N^F n_N}, \quad
Y_N' = X_{N,n_N,\varepsilon_N,\theta}^F, \quad
p^N_l = \pi^N_l(n),\quad
\Phi^N = \Phi^{N,n,F}_{\varepsilon,\theta}
\]
for the $\theta$ defined in \eqref{theta}
and $G=U^F(1)$ or $U^F(n)$.
We already check that the pyramid $\cP_{Y_N}$ converges weakly to $\cP_{\Gamma^\infty}$ as $N \to \infty$.
It is easy to cheek  the $G$-invariance of $Y'_N$.
Since $p^N_l$ is $1$-Lipschitz continuous and  $G$-equivariant, 
the space $\bar{Y}_N$ is monotone increasing in $N$ with respect to the Lipschitz order.
Proposition \ref{thm:MB} implies that 
$\lim_{N \to \infty}\dP((p^N_l)_\# \nu_{X_N}, \nu_{Y_l})=0$.
We confirm that $\Phi^N_\# \nu_{Y'_N} =\nu_{X_N}$ in Lemma \ref{lem:push} and 
$\Phi^N$ is $G$-equivariant in Remark \ref{spectrum}(2).
Theorem \ref{thm:fullmeas} implies $\lim_{N \to \infty}\nu_{Y_N}(Y_N')=1$. 
The smallest Lipschitz constant of $\Phi^N$ tends to $1$ as $N \to \infty$ due to Lemma \ref{lem:Lip-const}.
We thus apply Lemma \ref{keylem} and Corollary \ref{keycor}
to obtain 
\[
\cP_{X^F_{N,n_N}}\to\cP_{\Gamma^\infty}, \ 
\cP_{X^F_{N,n}/U^F(n)}\to\cP_{\Gamma^{\infty n}/U^F(n)}, \ 
\cP_{U^F(1)\backslash X^F_{N,n_N}}\to\cP_{U^F(1)\backslash\Gamma^\infty}, \quad
\text{as}\ N \to \infty.
\]
This completes the proof of the theorems.
\end{proof}
\begin{rem}
(1) Under the assumption \eqref{eq:condi}, or more weakly $\lim_{N\to\infty} n_N^3/N = 0$,
we always have $\lim_{N \to \infty}n_{\sigma_N}(\theta_N)=\infty$ 
for $\theta$ defined in \eqref{theta}
even if $\lim_{N \to \infty} n_N<\infty$.
This follows from Claim \ref{clm} and 
\[
\lim_{N \to \infty} \frac{\sigma_N}{\theta_N^2}
=\lim_{N \to \infty} \theta_N^{-q_N} 
\geq 3^{-q_N}\lim_{N \to \infty} \varepsilon_N^{-q_N}
\geq \frac13\lim_{N \to \infty} (N-1)^{a_Nq_N}=\infty.
\]
(2)
Assume $\lim_{N\to \infty} n_N =\infty$.
If we construct a pair of 
$\varepsilon_N$ and $\theta:(0,\infty) \to (0,\infty)$ 
satisfying 
\[
\lim_{N \to \infty} \varepsilon_N=0, \quad
\lim_{N \to \infty} \gamma^{N^F n} (X_{N,n,\varepsilon_N, \theta}^F)=1
\]
with the help of Lemma \ref{lem:subset}, 
and 
we prove the convergence of $\cP_{X_{N,n}^F}$
by using Lemma \ref{lem:lip}, Theorem \ref{lem:1}
and Lemma \ref{keylem}, where we set $\Phi^N=\Phi^{N,n,F}_{\varepsilon,\theta}$, 
then the assumption
\[
\lim_{N \to \infty} \frac{n_N^3}{N-1}=0,
\]
which is slightly  weaker than \eqref{eq:condi}, is a necessary condition 
according the following three  conditions (a), (b) and (c).
\begin{enumerate}[(a)]
\item\label{2}
As mentioned in Remark \ref{rem:angle},
if we use Lemma \ref{lem:subset},  then we require 
\begin{gather*}  
\left(B^{F}(T_{N,n_N-1})\right)^{n_N-1} \times A^{(N-n_N+1)^{F}}_{\varepsilon_{N,n_N-1} } \subset A^{N^{F}}_{\varepsilon_N }, 
\quad
\lim_{N\to \infty} T_{N,n_N-1}=\infty, \\
\frac{1}{\theta_N^{2}} 
<
1+\frac{(1-\varepsilon_{N,n_N-1} )^2((N-n_N+1)^F-1)}{(n_N-1)T_{N,n_N-1}^2}.
\end{gather*}
\item \label{4}
To use Theorem \ref{lem:1}, we need to assume $n_N \le n_{\sigma_{n_N}}(\theta_N)$.
\item \label{3}
By Lemma \ref{lem:lip} and Theorem \ref{lem:1}, we need
\[
\lim_{N\to \infty} n_N \sigma_N = 0.
\]
\end{enumerate}
By \eqref{2}, we have
\begin{align*}
(n_N-1)T_{N,n_N-1}^2
+(1+\varepsilon_{N,n_N-1})^2((N-n_N+1)^F-1)
<(1+\varepsilon_{N})^2(N^F-1),
\end{align*}
providing
\begin{align*}
\infty=\lim_{N \to \infty} T_{N,n_N-1}^2
=
\lim_{N \to \infty}
\frac{N-1}{n_N}.
\end{align*}
On one hand, if $\lim_{N\to \infty} {\sigma_N}/{\theta_N}>0$ holds, 
then we require, by \eqref{3},
\[
0=\lim_{N \to \infty} n_N \sigma_N=\lim_{N\to \infty} n_N \theta_N \cdot \frac{\sigma_N}{\theta_N},
\]
hence $\lim_{N \to \infty} n_N\theta_N =0$.
On the other hand, 
if we assume
$\lim_{N\to \infty} {\sigma_N}/{\theta_N}=0$,
then we observe from \eqref{4} and Lemma \ref{lem:above} with $\lim_{N \to \infty}n_N=\infty$ 
that $\theta_N^2<\sigma_N$ and 
\[
\lim_{N \to \infty} n_N \theta_N \cdot \frac{\theta_N}{\sigma_N}
\leq 
\lim_{N \to \infty} \left(1+\frac{\sigma_N}{\theta_N^2}\right) \cdot \frac{\theta_N^2}{\sigma_N}
=\lim_{N \to \infty} \frac{\theta_N^2}{\sigma_N} +1 
\leq 2, 
\]
providing   $\lim_{N \to \infty} n_N\theta_N =0$.
We therefore obtain 
\begin{align*}
\lim_{N \to \infty} \frac{n_N^3}{N-1}
=
\lim_{N \to \infty} 
n_N^2 \theta^2_N 
\cdot \frac{n_N}{(N-1)\theta^2_N}
=0.
\end{align*}
\end{rem}

\section{Asymptotic estimate of observable diameter}

For the proof of Corollary \ref{cor:ObsDiam} we need the following
\begin{defn}[Observable diameter of pyramid; \cite{OzSy:pyramid}] \label{defn:ObsDiam-pyramid}
  Let $\kappa > 0$.
  The \emph{observable diameter of a pyramid $\cP$} is defined to be
  \[
  \ObsDiam(\cP;-\kappa)
  := \lim_{\varepsilon\to 0+} \sup_{X \in \cP} \ObsDiam(X;-(\kappa+\varepsilon))
  \quad (\le +\infty).
  \]
\end{defn}

For any mm-space $X$ we have
\[
\ObsDiam(\cP_X;-\kappa) = \ObsDiam(X;-\kappa)
  \]
for any $\kappa > 0$ (see \cite{OzSy:pyramid}).

\begin{thm}[Limit formulas; \cite{OzSy:pyramid}]
  \label{thm:lim}
  Let $\cP$ and $\cP_n$, $n=1,2,\dots$, be pyramids.
  If $\cP_n$ converges weakly to $\cP$ as $n\to\infty$, then, for any $\kappa > 0$,
  \begin{align*}
    \ObsDiam(\cP;-\kappa)
    &= \lim_{\varepsilon\to 0+} \liminf_{n\to\infty}
    \ObsDiam(\cP_n;-(\kappa+\varepsilon)) \\
    &= \lim_{\varepsilon\to 0+} \limsup_{n\to\infty}
    \ObsDiam(\cP_n;-(\kappa+\varepsilon)).
  \end{align*}
\end{thm}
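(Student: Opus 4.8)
The plan is to prove both equalities at once from the chain
\[
\ObsDiam(\cP;-\kappa)\ \le\ \lim_{\varepsilon\to0+}\liminf_{n\to\infty}\ObsDiam(\cP_n;-(\kappa+\varepsilon))\ \le\ \lim_{\varepsilon\to0+}\limsup_{n\to\infty}\ObsDiam(\cP_n;-(\kappa+\varepsilon))\ \le\ \ObsDiam(\cP;-\kappa),
\]
whose middle inequality is trivial; the two outer limits in $\varepsilon$ exist as monotone limits since $\kappa\mapsto\ObsDiam(\cdot\,;-\kappa)$ is non-increasing, both for mm-spaces and (by Definition \ref{defn:ObsDiam-pyramid}) for pyramids. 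Two soft facts from the general theory enter. First, the comparison between box metric and observable diameter: there is a universal constant $c$ such that $\square(X,Y)<\eta$ implies $\ObsDiam(X;-\kappa)\le\ObsDiam(Y;-(\kappa-c\eta))+c\eta$ whenever $\kappa>c\eta$ (see \cite{Gro:green,Sy:mmg}). Second, a semicontinuity estimate for the partial diameter: if $\mu_k$ are tight Borel probability measures on $\R$ with $\mu_k\to\mu_\infty$ weakly and $\diam(\mu_k;1-s)\ge D$ for all $k$, then $\diam(\mu_\infty;1-s')\ge D$ for every $s'<s$; this is an elementary consequence of the portmanteau theorem.

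For the lower (first) inequality I would fix $\delta>0$, choose $\varepsilon_0>0$ and an mm-space $X\in\cP$ with $\ObsDiam(X;-(\kappa+\varepsilon_0))\ge\ObsDiam(\cP;-\kappa)-\delta$ — possible because $\sup_{X\in\cP}\ObsDiam(X;-(\kappa+\varepsilon))\uparrow\ObsDiam(\cP;-\kappa)$ as $\varepsilon\to0+$ — and then use condition (1) of Definition \ref{defn:w-conv} to pick $X_n\in\cP_n$ with $\eta_n:=\square(X,X_n)\to0$. The box/observable-diameter comparison, applied with $Y=X_n$, gives $\ObsDiam(X_n;-(\kappa+\varepsilon_0-c\eta_n))\ge\ObsDiam(X;-(\kappa+\varepsilon_0))-c\eta_n\ge\ObsDiam(\cP;-\kappa)-\delta-c\eta_n$, and since $X_n\in\cP_n$ this forces $\ObsDiam(\cP_n;-(\kappa+\varepsilon))\ge\ObsDiam(\cP;-\kappa)-\delta-c\eta_n$ for every fixed $\varepsilon<\varepsilon_0$ and all large $n$. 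Letting $n\to\infty$, then $\varepsilon\to0+$, then $\delta\to0+$ yields $\lim_{\varepsilon\to0+}\liminf_n\ObsDiam(\cP_n;-(\kappa+\varepsilon))\ge\ObsDiam(\cP;-\kappa)$. If this quantity — equivalently $\ObsDiam(\cP;-\kappa)$ — equals $+\infty$ the theorem is proved, so from now on I assume $\ObsDiam(\cP;-\kappa)<\infty$.

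The upper (third) inequality is the heart of the matter. Suppose it fails: then $\limsup_n\ObsDiam(\cP_n;-(\kappa+\varepsilon))\ge\ObsDiam(\cP;-\kappa)+\rho_0$ for some $\rho_0>0$ and every $\varepsilon>0$. Fix a small $\varepsilon>0$ with $\kappa+2\varepsilon<1$, set $D:=\ObsDiam(\cP;-\kappa)+\rho_1$ with $\rho_1:=\rho_0/8$, pass to a subsequence, and choose $X_{n_k}\in\cP_{n_k}$ together with $f_k\in\Lip_1(X_{n_k})$ such that $\diam((f_k)_\#\mu_{X_{n_k}};1-\kappa-2\varepsilon)\ge D$; this is possible because $\sup_{W\in\cP_{n_k}}\ObsDiam(W;-(\kappa+2\varepsilon))\ge\ObsDiam(\cP_{n_k};-(\kappa+\varepsilon))$, whose right side exceeds $\ObsDiam(\cP;-\kappa)+\rho_0/2$ along the subsequence. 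A routine truncation-and-recentering of the $f_k$ — replacing $f_k$ by its clamp to a window of width of order $D$ about a suitable quantile of $(f_k)_\#\mu_{X_{n_k}}$ — produces $1$-Lipschitz maps whose push-forwards $\mu_k$ are supported in one fixed bounded interval of $\R$ and still satisfy $\diam(\mu_k;1-\kappa-2\varepsilon+\tau)\ge D-\tau$ for an arbitrarily small $\tau>0$; the only input is that a partial diameter $\ge D$ at a level $1-s<1$ prevents $(f_k)_\#\mu_{X_{n_k}}$ from packing $\ge1-s$ of its mass into a window of width $<D$. Now $\{\mu_k\}$ is tight, so by Prokhorov's theorem, after a further subsequence, $\mu_k\to\mu_\infty$ weakly. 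Since $f_k$ exhibits $(\R,\mu_k)\prec X_{n_k}$, we have $(\R,\mu_k)\in\cP_{n_k}$, and $\square((\R,\mu_k),(\R,\mu_\infty))\le2\dP(\mu_k,\mu_\infty)\to0$ by Proposition \ref{prop:box-dP} and the fact that $\dP$ metrizes weak convergence; hence $\square((\R,\mu_\infty),\cP_{n_k})\to0$, and condition (2) of Definition \ref{defn:w-conv} forces $(\R,\mu_\infty)\in\cP$. The semicontinuity estimate then gives $\diam(\mu_\infty;1-\kappa'')\ge D-\tau$ for every $\kappa''<\kappa+2\varepsilon-\tau$, so (using the identity as a $1$-Lipschitz function and Definition \ref{defn:ObsDiam-pyramid}) $\ObsDiam(\cP;-\kappa')\ge D-\tau$ for every $\kappa'$ with $\kappa<\kappa'<\kappa+2\varepsilon-\tau$, a nonempty range once $\tau$ is small. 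But $\ObsDiam(\cP;-\kappa')\le\ObsDiam(\cP;-\kappa)$ for $\kappa'>\kappa$, whence $\ObsDiam(\cP;-\kappa)+\rho_1-\tau\le\ObsDiam(\cP;-\kappa)$, which is absurd once $\tau<\rho_1$.

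I expect the third inequality to be the main obstacle, and within it the genuine danger: the near-extremal mm-spaces $X_{n_k}\in\cP_{n_k}$, or the $1$-Lipschitz functions realizing their observable diameters, may degenerate or translate off to infinity as $k\to\infty$, so that no limiting object remains inside $\cP$. The remedy is to work throughout at the level of the one-dimensional push-forwards on $\R$, which can be truncated and recentered into a tight family — hence weakly subconvergent — without losing more than an arbitrarily small amount of partial diameter, and then to use the $\square$-approximation characterization of membership in a weak-limit pyramid (condition (2) of Definition \ref{defn:w-conv}) to place the limiting one-dimensional space inside $\cP$. The box/observable-diameter comparison, the partial-diameter semicontinuity, and the various small level shifts produced by truncation are soft, and are exactly what the double limit $\lim_{\varepsilon\to0+}$ in the statement is built to absorb.
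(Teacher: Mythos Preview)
The paper does not prove Theorem~\ref{thm:lim}; it is quoted from \cite{OzSy:pyramid} and used as a black box in the proof of Corollary~\ref{cor:ObsDiam}. There is therefore no proof in this paper to compare your proposal against.

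That said, your outline is the natural strategy and is broadly in the spirit of the argument in \cite{OzSy:pyramid}: the lower bound comes from approximating a fixed $X\in\cP$ by members of $\cP_n$ via condition~(1) of Definition~\ref{defn:w-conv}, and the upper bound comes from pushing near-extremizers in $\cP_n$ down to tight one-dimensional measures, extracting a weak limit, and invoking condition~(2) to place the limit in $\cP$. One small slip: you write $\sup_{W\in\cP_{n_k}}\ObsDiam(W;-(\kappa+2\varepsilon))\ge\ObsDiam(\cP_{n_k};-(\kappa+\varepsilon))$, but by Definition~\ref{defn:ObsDiam-pyramid} the limit in $\varepsilon'$ is monotone, so the inequality actually goes the other way. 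This is harmless, since your contradiction hypothesis holds for \emph{every} $\varepsilon>0$; just invoke it at level $2\varepsilon$ (or $3\varepsilon$) directly and then unwind the definition to extract $X_{n_k}$ and $f_k$ at the desired level. With that adjustment your sketch goes through.
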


Theorem \ref{thm:lim} together with \cite{Sy:mmg}*{Theorem 2.21}
(see also Corollary \cite{Sy:mmlim}*{\S 5}) leads us to the following

\begin{thm}
We have
\[
\ObsDiam(\cP_{\Gamma^\infty};-\kappa)
= 2D^{-1}\left(1-\frac{\kappa}{2}\right)
\]
for any $\kappa$ with $0 < \kappa < 1$.
\end{thm}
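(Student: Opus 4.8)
The plan is to obtain $\cP_{\Gamma^\infty}$ as the weak limit of the finite-dimensional Gaussian pyramids and to transport the observable diameter across this limit. Recall from \S\ref{sec:Gauss} that $\cP_{\Gamma^m}$ converges weakly to $\cP_{\Gamma^\infty}$ as $m\to\infty$ (indeed $\cP_{\Gamma^\infty}$ is defined this way, since $\{\Gamma^m\}_{m=1}^\infty$ is monotone increasing in the Lipschitz order and Lemma \ref{lem:obvious}(2) applies). Feeding this into the limit formula of Theorem \ref{thm:lim} and using the identity $\ObsDiam(\cP_X;-\kappa)=\ObsDiam(X;-\kappa)$, we arrive at
\[
\ObsDiam(\cP_{\Gamma^\infty};-\kappa)
= \lim_{\varepsilon\to 0+}\liminf_{m\to\infty}\ObsDiam(\Gamma^m;-(\kappa+\varepsilon))
= \lim_{\varepsilon\to 0+}\limsup_{m\to\infty}\ObsDiam(\Gamma^m;-(\kappa+\varepsilon)),
\]
so the problem reduces to evaluating $\ObsDiam(\Gamma^m;-\kappa)$ for finite $m$ and then letting $\varepsilon\to 0+$.

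Next I would show that $\ObsDiam(\Gamma^m;-\kappa)=2\,D^{-1}(1-\kappa/2)$ for every $m\ge 1$; in particular this value is independent of $m$, so both the $\liminf$ and the $\limsup$ above equal $2\,D^{-1}(1-(\kappa+\varepsilon)/2)$. For the lower bound, the coordinate projection $\R^m\ni x\mapsto x_1\in\R$ is $1$-Lipschitz and pushes $\gamma^m$ forward to $\gamma^1$, so $\ObsDiam(\Gamma^m;-\kappa)\ge\diam(\gamma^1;1-\kappa)$; since any Borel set of diameter $d$ lies in an interval of length $d$, whose $\gamma^1$-measure is at most $2D(d/2)-1$, a set of $\gamma^1$-measure at least $1-\kappa$ has diameter at least $2\,D^{-1}(1-\kappa/2)$, and the centered interval $[-D^{-1}(1-\kappa/2),\,D^{-1}(1-\kappa/2)]$ realizes this, so $\diam(\gamma^1;1-\kappa)=2\,D^{-1}(1-\kappa/2)$. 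For the matching upper bound I would invoke \cite{Sy:mmg}*{Theorem 2.21} (see also \cite{Sy:mmlim}*{\S 5}), which, via the Gaussian isoperimetric inequality, shows that among all $1$-Lipschitz functions $f$ on $\Gamma^m$ the coordinate projection maximizes $\diam(f_\#\gamma^m;1-\kappa)$; combining the two bounds gives the asserted value.

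Finally, since $D$ is continuous and strictly increasing from $\R$ onto $(0,1)$, its inverse $D^{-1}\colon(0,1)\to\R$ is continuous, and for $0<\kappa<1$ the argument $1-(\kappa+\varepsilon)/2$ stays in $(0,1)$ once $\varepsilon>0$ is small; hence $\lim_{\varepsilon\to 0+}2\,D^{-1}(1-(\kappa+\varepsilon)/2)=2\,D^{-1}(1-\kappa/2)$, which with the displayed formula yields $\ObsDiam(\cP_{\Gamma^\infty};-\kappa)=2\,D^{-1}(1-\kappa/2)$. The main obstacle is the upper bound $\ObsDiam(\Gamma^m;-\kappa)\le 2\,D^{-1}(1-\kappa/2)$---that no $1$-Lipschitz function spreads $\gamma^m$ more than a coordinate projection does---which is exactly the cited extremality result and ultimately rests on the isoperimetric inequality in Gauss space; everything else (the weak convergence, the limit formula, the elementary interval estimate, and the continuity of $D^{-1}$) is routine given the results recalled above.
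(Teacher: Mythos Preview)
Your proposal is correct and follows essentially the same route as the paper: the paper simply states that the theorem follows from the limit formula (Theorem \ref{thm:lim}) together with \cite{Sy:mmg}*{Theorem 2.21} (and \cite{Sy:mmlim}*{\S 5}), which is precisely your strategy of passing the observable diameter through the weak limit $\cP_{\Gamma^m}\to\cP_{\Gamma^\infty}$ and using the known value $\ObsDiam(\Gamma^m;-\kappa)=2D^{-1}(1-\kappa/2)$. You have just filled in the elementary lower-bound computation and the continuity argument for $D^{-1}$ that the paper leaves implicit.
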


We are now in a position to prove the

\begin{proof}[Proof of Corollary \ref{cor:ObsDiam}]
  (1) follows from Theorems \ref{thm:Stiefel} and \ref{thm:lim}.
  
  Since $G^F_{N,n_N}, \textrm{P}V^F_{N,n_N} \prec V^F_{N,n_N}$,
  we have (2) and (4) from (1).

  We prove (3).
  Let $f : M^F_{1,n}/U^F(n) \to [\,0,+\infty\,)$ be the function defined by
  $f(\bar{z}) := \|z\|$, $\bar{z} \in M^F_{1,n}/U^F(n)$.
  Note that $f$ is an isometry.
  By using polar coordinates, we have
  \[
  d(f_\# \bar{\gamma}^{n^F})(r) = g_{n^F}(r) \; dr,
  \]
  where $g_m$ is the function defined in \S\ref{ssec:Gaussian}.
  Since $g_m(r) \le g_2(1) = e^{-1/2}$, we have
  \begin{align*}
  \ObsDiam((M^F_{N,n}/U^F(n),\bar{\gamma}^{N n^F});-\kappa)
  &\ge \ObsDiam((M^F_{1,n}/U^F(n),\bar{\gamma}^{n^F});-\kappa) \\
  &= \diam(f_\# \bar{\gamma}^{n^F}; 1-\kappa)
  \ge e^{1/2} (1-\kappa),
  \end{align*}
  which together with Theorems \ref{thm:Gr-pS}(1) and \ref{thm:lim} implies (3).

  We prove (5).
  Since $\sqrt{N^F-1} \,\textrm{P}V^F_{N,m_N} = U^F(1)\backslash X^F_{N,m_N} \succ U^F(1)\backslash X^F_{N,1}$,
  it suffices to estimate the observable diameter of $U^F(1)\backslash X^F_{N,1}$ from below.
  By Theorems \ref{thm:Gr-pS}(2) and \ref{thm:lim}, 
  \begin{align*}
  \lim_{N\to\infty} \ObsDiam(U^F(1)\backslash X^F_{N,1};-\kappa)
  &= \ObsDiam(\cP_{U^F(1)\backslash \Gamma^\infty};-\kappa) \\
  &\ge \ObsDiam((U^F(1)\backslash F,\bar\gamma^{1^F});-\kappa),
  \end{align*}
  where the last inequality follows from 
  $\cP_{U^F(1)\backslash \Gamma^\infty} \ni (U^F(1)\backslash F,\bar\gamma^{1^F})$, $1^F = \dim_\R F$.
  We see that $(U^F(1)\backslash F,\bar{\gamma}^{1^F})$ is mm-isomorphic to $([\,0,+\infty\,), g_{1^F}(r)\,dr)$ and therefore
  \[
  \ObsDiam((U^F(1)\backslash F,\bar\gamma^{1^F});-\kappa) = \diam(g_{1^F}(r)\,dr;1-\kappa) \ge e^{1/2}(1-\kappa).
  \]
  This completes the proof.
\end{proof}

Corollary \ref{cor:ObsDiam}(2)(3) together with Theorem \cite{OzSy:pyramid}*{Theorem 1.2}
implies the following

\begin{cor}
Let $n$ be any fixed positive integer.
\begin{enumerate}
\item We have $t_N/\sqrt{N} \to 0$ as $N \to \infty$ if and only if $\{t_N G^F_{N,n}\}$ is a L\'evy family,
i.e., converges weakly to one-point mm-space.
\item We have $t_N/\sqrt{N} \to 0$ as $N \to \infty$ if and only if $\{t_N G^F_{N,n}\}$ infinitely dissipates,
i.e., the associated pyramid converges weakly to $\cX$.
\end{enumerate}
\end{cor}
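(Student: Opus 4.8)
The plan is to combine the two-sided asymptotics for the observable diameter of $G^F_{N,n}$ supplied by Corollary~\ref{cor:ObsDiam} with the characterizations of the L\'evy and infinite-dissipation properties in terms of scaled metric--measure invariants in \cite{OzSy:pyramid}*{Theorem 1.2}. The first thing to check is that a fixed positive integer $n$ is admissible in Corollary~\ref{cor:ObsDiam}(2): the constant sequence $n_N\equiv n$ satisfies ($*$), since $2\log n-\frac{c}{4}\sqrt{N/n^{3}}\to-\infty$ for any $c\in(0,1)$. Thus, for every $\kappa\in(0,1)$, Corollary~\ref{cor:ObsDiam}(2) and (3) together provide constants $0<b_\kappa\le C_\kappa<\infty$ with
\[
\frac{b_\kappa}{\sqrt{N^{F}}}\le \ObsDiam(G^F_{N,n};-\kappa)\le \frac{C_\kappa}{\sqrt{N^{F}}}
\]
for all large $N$; since $N^{F}=N\dim_\R F$ and $\ObsDiam(tX;-\kappa)=t\,\ObsDiam(X;-\kappa)$, this says that $\ObsDiam(t_N G^F_{N,n};-\kappa)$ is comparable to $t_N/\sqrt N$ with constants depending only on $\kappa$ and $F$.

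For part~(1) I would argue as follows. If $t_N/\sqrt N\to0$, then the upper bound gives $\ObsDiam(t_N G^F_{N,n};-\kappa)\le C_\kappa\,t_N/\sqrt{N^{F}}\to0$ for each $\kappa\in(0,1)$, hence, by monotonicity of $\ObsDiam(\,\cdot\,;-\kappa)$ in $\kappa$, for every $\kappa>0$; by \cite{OzSy:pyramid}*{Theorem 1.2} (or the classical criterion, cf.~\cite{Sy:mmg}) the sequence $\{t_N G^F_{N,n}\}$ concentrates to a one-point mm-space, i.e.\ is a L\'evy family. Conversely, if $t_N/\sqrt N\not\to0$, choose $c>0$ and a subsequence $\{N_j\}$ along which $t_{N_j}/\sqrt{N_j}\ge c$; fixing any $\kappa\in(0,1)$, the lower bound yields $\ObsDiam(t_{N_j} G^F_{N_j,n};-\kappa)\ge b_\kappa\,c/\sqrt{\dim_\R F}>0$, so $\ObsDiam(t_N G^F_{N,n};-\kappa)\not\to0$ and $\{t_N G^F_{N,n}\}$ is not a L\'evy family. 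This is exactly the equivalence in~(1).

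Part~(2) is the analogous statement at the opposite end. By \cite{OzSy:pyramid}*{Theorem 1.2}, $\{t_N G^F_{N,n}\}$ infinitely dissipates exactly when $t_N$ times the size invariant of $G^F_{N,n}$ governing dissipation (the separation distance, which is comparable to the observable diameter up to a universal constant) diverges; by Corollary~\ref{cor:ObsDiam}(2)(3) this invariant is again of order $N^{-1/2}$, so the infinite-dissipation condition translates into the growth condition on $t_N/\sqrt N$ recorded in~(2). The bookkeeping is dual to that of the previous paragraph: the lower bound~(3) is what forces the pyramid of $t_N G^F_{N,n}$ to exhaust $\cX$, while the upper bound~(2) is what obstructs this.

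There is no serious obstacle here; once Corollary~\ref{cor:ObsDiam} and \cite{OzSy:pyramid}*{Theorem 1.2} are in hand the argument is essentially bookkeeping. The only points demanding a little care are the verification that a fixed $n$ meets the hypothesis ($*$) of Corollary~\ref{cor:ObsDiam}(2), the linear behaviour of the observable diameter under $X\mapsto tX$ together with $N^{F}=N\dim_\R F$, and---for part~(2)---matching the invariant appearing in \cite{OzSy:pyramid}*{Theorem 1.2} with the observable diameter controlled by Corollary~\ref{cor:ObsDiam}, via the standard comparison between the separation distance and the observable diameter.
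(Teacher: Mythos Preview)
Your approach is exactly the paper's: the paper offers no proof beyond the single sentence ``Corollary~\ref{cor:ObsDiam}(2)(3) together with Theorem \cite{OzSy:pyramid}*{Theorem 1.2} implies the following,'' and you have correctly spelled out how these two ingredients combine via the two-sided asymptotic $\ObsDiam(G^F_{N,n};-\kappa)\asymp N^{-1/2}$. (As you seem to have noticed by writing ``growth condition,'' the hypothesis in part~(2) as printed is evidently a typo for $t_N/\sqrt{N}\to\infty$.)
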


Note that the same property for many other manifolds was already obtained in \cite{OzSy:pyramid}.

\renewcommand{\thesection}{\Alph{section}}
\setcounter{section}{0}
\section{Appendix: $U^F(N)$ as a subgroup of $U^\R(N^F)$}
For  $z:=z_0+z_1\i+z_2\j+z_3\k \in F^N$, we set 
\[
\Ree(z):=z_0,\quad
\I(z):=z_1,\quad
\J(z):=z_2,\quad
\K(z):=z_3.
\]
It follows that for
$z:=z_0+z_1\i+z_2\j+z_3\k , w:=w_0+w_1\i+w_2\j+w_3\k  \in F^N$,
\[
\lr{
\begin{pmatrix}
      z_0\\
      z_1 \\
      z_2 \\
      z_3
    \end{pmatrix}}
{\begin{pmatrix}
      w_0\\
      w_1 \\
      w_2 \\
      w_3
    \end{pmatrix}}
=\sum_{l=0}^3 z_lw_l
=\Re\lr{z}{w}.
\]
\begin{lem}\label{lem:unit}
Define a map
$\mathcal{O}^F: U^F(N) \hookrightarrow \mathrm{M}_{N^F}(\R)$ 
by 
\begin{gather*}
\mathcal{O}^{F}(U)
\begin{pmatrix}
      z_0\\
      z_1 \\
      z_2 \\
      z_3
\end{pmatrix}
:=
\begin{pmatrix}
\Ree(Uz)\\
\I(Uz)\\
\J(Uz)\\
\K(Uz))
    \end{pmatrix},
\end{gather*}
for $z_0,z_1,z_2,z_3 \in \R^N$,
where $z:=z_0+z_1\i+z_2\j+z_3\k \in F^N$.
Then we have for $U, V \in U^F(N)$,
\[
\mathcal{O}^{F}(U) \in U^\R(N^F), \quad
\mathcal{O}^{F}(UV)=\mathcal{O}^{F}(U)\mathcal{O}^{F}(V), \quad
\mathcal{O}^{F}(U^\ast)=\mathcal{O}^{F}(U)^\ast.
\]
\end{lem}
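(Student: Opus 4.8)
The plan is to realize $\mathcal{O}^F(U)$ as the matrix, with respect to the fixed real basis determined by the coordinate decomposition $z = z_0 + z_1\i + z_2\j + z_3\k$, of the map $L_U : F^N \to F^N$ given by left multiplication, $L_U(z) := Uz$. First I would check that $L_U$ is $\R$-linear: additivity is immediate from the distributivity of the matrix product, and $L_U(\lambda z) = \lambda\,L_U(z)$ for $\lambda \in \R$ because $\R$ is central in $F$, so real scalars commute through the (possibly quaternionic) entries of $U$. Hence $\mathcal{O}^F(U) \in M_{N^F}(\R)$ is well defined, and $U \mapsto \mathcal{O}^F(U)$ is nothing but $U \mapsto L_U$ written in coordinates; it is injective since $\mathcal{O}^F(U) = \mathcal{O}^F(V)$ forces $Uz = Vz$ for all $z$, hence $U = V$ by testing on the standard basis. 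The multiplicativity $\mathcal{O}^F(UV) = \mathcal{O}^F(U)\,\mathcal{O}^F(V)$ then follows from $L_{UV} = L_U \circ L_V$, i.e.\ from the associativity $(UV)z = U(Vz)$ of the matrix product over the associative algebra $F$, combined with the fact that the coordinate matrix of a composition of linear maps is the product of the coordinate matrices.

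For the orthogonality I would invoke the inner-product identity recorded immediately before the lemma: under the identification $F^N \cong \R^{N^F}$ the standard Euclidean inner product on $\R^{N^F}$ corresponds to $\Re\langle\cdot,\cdot\rangle$ on $F^N$, so in particular the Euclidean norm of $(z_0,z_1,z_2,z_3)$ equals the Frobenius norm $\|z\|$. Since $U$ is unitary we have $\|Uz\| = \|z\|$ for every $z \in F^N$ (as noted in the excerpt), and therefore $\mathcal{O}^F(U)$ preserves the Euclidean norm on $\R^{N^F}$; thus $\mathcal{O}^F(U) \in U^\R(N^F)$.

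For the adjoint formula $\mathcal{O}^F(U^\ast) = \mathcal{O}^F(U)^\ast$ I would compute, for $z, z' \in F^N$ with images $v, v' \in \R^{N^F}$,
\[
\langle \mathcal{O}^F(U)v,\, v'\rangle = \Re\langle Uz,\, z'\rangle = \Re\tr\bigl((Uz)^\ast z'\bigr) = \Re\tr\bigl(z^\ast U^\ast z'\bigr) = \Re\langle z,\, U^\ast z'\rangle = \langle v,\, \mathcal{O}^F(U^\ast)v'\rangle,
\]
using $(Uz)^\ast = z^\ast U^\ast$; since the adjoint of a real matrix relative to the Euclidean inner product is unique, this yields $\mathcal{O}^F(U)^\ast = \mathcal{O}^F(U^\ast)$. (Once multiplicativity and orthogonality are in hand one could equally argue $\mathcal{O}^F(U)^\ast = \mathcal{O}^F(U)^{-1} = \mathcal{O}^F(U^{-1}) = \mathcal{O}^F(U^\ast)$.) The only genuine point requiring care — and the thing I would watch most closely — is the non-commutativity of $\H$: one must verify that left multiplication by $U$ is $\R$-linear rather than merely additive, which is exactly where centrality of $\R$ enters, and that $(UV)z = U(Vz)$ holds, which it does since $\H$, and hence matrix multiplication over $\H$, is associative. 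Everything else is routine bookkeeping with the four real coordinate blocks.
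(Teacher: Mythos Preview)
Your proof is correct and follows essentially the same approach as the paper: orthogonality via the identity $\Re\langle Uz,Uw\rangle=\Re\langle z,w\rangle$, multiplicativity via $L_{UV}=L_U\circ L_V$, and the adjoint formula. The only cosmetic difference is that the paper derives $\mathcal{O}^F(U^\ast)=\mathcal{O}^F(U)^\ast$ by the shortcut $\mathcal{O}^F(U)^\ast\mathcal{O}^F(U)=I_{N^F}=\mathcal{O}^F(U^\ast U)=\mathcal{O}^F(U^\ast)\mathcal{O}^F(U)$, which you also note parenthetically.
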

\begin{proof}
For any $z:=z_0+z_1\i+z_2\j+z_3\k , w:=w_0+w_1\i+w_2\j+w_3\k  \in F^N$ and  $U,V \in U^F(N)$, 
we compute that  
\begin{align*}
\lr{\mathcal{O}^{F}(U)
\begin{pmatrix}
      z_0\\
      z_1 \\
      z_2 \\
      z_3
\end{pmatrix}}
{
\mathcal{O}^{F}(U)
\begin{pmatrix}
      z_0\\
      z_1 \\
      z_2 \\
      z_3
\end{pmatrix}}
=\Re\lr{Uz}{Uw}=\Re\lr{z}{w}
=
\lr{
\begin{pmatrix}
      z_0\\
      z_1 \\
      z_2 \\
      z_3
\end{pmatrix}}%
{\begin{pmatrix}
      w_0\\
      w_1 \\
      w_2 \\
      w_3
    \end{pmatrix}},\\
\mathcal{O}^{F}(UV)
\begin{pmatrix}
      z_0\\
      z_1 \\
      z_2 \\
      z_3
\end{pmatrix}
=
\begin{pmatrix}
\Re(UVz)\\
\I(UVz)\\
\J(UVz)\\
\K(UVz)
\end{pmatrix}
=
\mathcal{O}^{F}(U)
\begin{pmatrix}
\Re(Vz)\\
\I(Vz)\\
\J(Vz)\\
\K(Vz)   
\end{pmatrix}
=
\mathcal{O}^{F}(U)
\mathcal{O}^{F}(V)
\begin{pmatrix}
      z_0\\
      z_1 \\
      z_2 \\
      z_3
\end{pmatrix},
\end{align*}
implying the first two claims.
We also find  that $\mathcal{O}^{F}(I_N)=I_{N^F}$.
This implies that 
\[
 \mathcal{O}^{F}(U)^\ast \mathcal{O}^{F}(U)=I_N
= \mathcal{O}^{F}(U^\ast U)
= \mathcal{O}^{F}(U^\ast) \mathcal{O}^{F}(U),
\]
hence $\mathcal{O}^{F}(U^\ast)=\mathcal{O}^{F}(U)^\ast$.
This completes the proof.
\end{proof}
%

\begin{bibdiv}
  \begin{biblist}
 
\bib{Bil}{book}{
   author={Billingsley, Patrick},
   title={Convergence of probability measures},
   series={Wiley Series in Probability and Statistics: Probability and
   Statistics},
   edition={2},
   note={A Wiley-Interscience Publication},
   publisher={John Wiley \& Sons Inc.},
   place={New York},
   date={1999},
   pages={x+277},
   isbn={0-471-19745-9},
}

\bib{Bog}{book}{
   author={Bogachev, V. I.},
   title={Measure theory. Vol. I, II},
   publisher={Springer-Verlag},
   place={Berlin},
   date={2007},
   pages={Vol. I: xviii+500 pp., Vol. II: xiv+575},
   isbn={978-3-540-34513-8},
   isbn={3-540-34513-2},
}

\bib{BBI}{book}{
   author={Burago, Dmitri},
   author={Burago, Yuri},
   author={Ivanov, Sergei},
   title={A course in metric geometry},
   series={Graduate Studies in Mathematics},
   volume={33},
   publisher={American Mathematical Society},
   place={Providence, RI},
   date={2001},
   pages={xiv+415},
   isbn={0-8218-2129-6},
}
 \bib{CFG}{article}{
   author={Cai, Tony},
   author={Fan, Jianqing},
   author={Jiang, Tiefeng},
   title={Distributions of angles in random packing on spheres},
   journal={J. Mach. Learn. Res.},
   volume={14},
   date={2013},
   pages={1837--1864},
   issn={1532-4435},
}

\bib{Gro:green}{book}{
   author={Gromov, Misha},
   title={Metric structures for Riemannian and non-Riemannian spaces},
   series={Modern Birkh\"auser Classics},
   edition={Reprint of the 2001 English edition},
   note={Based on the 1981 French original;
   With appendices by M. Katz, P. Pansu and S. Semmes;
   Translated from the French by Sean Michael Bates},
   publisher={Birkh\"auser Boston Inc.},
   place={Boston, MA},
   date={2007},
   pages={xx+585},
   isbn={978-0-8176-4582-3},
   isbn={0-8176-4582-9},
}

\bib{GroMil}{article}{
   author={Gromov, M.},
   author={Milman, V. D.},
   title={A topological application of the isoperimetric inequality},
   journal={Amer. J. Math.},
   volume={105},
   date={1983},
   number={4},
   pages={843--854},
   issn={0002-9327},
}

\bib{Kechris}{book}{
   author={Kechris, Alexander S.},
   title={Classical descriptive set theory},
   series={Graduate Texts in Mathematics},
   volume={156},
   publisher={Springer-Verlag},
   place={New York},
   date={1995},
   pages={xviii+402},
   isbn={0-387-94374-9},
}

\bib{Le}{book}{
   author={Ledoux, Michel},
   title={The concentration of measure phenomenon},
   series={Mathematical Surveys and Monographs},
   volume={89},
   publisher={American Mathematical Society},
   place={Providence, RI},
   date={2001},
   pages={x+181},
   isbn={0-8218-2864-9},
}

\bib{Levy}{book}{
   author={L{\'e}vy, Paul},
   title={Probl\`emes concrets d'analyse fonctionnelle. Avec un compl\'ement
   sur les fonctionnelles analytiques par F. Pellegrino},
   language={French},
   note={2d ed},
   publisher={Gauthier-Villars},
   place={Paris},
   date={1951},
   pages={xiv+484},
}
\bib{Li}{article}{
   author={Li, Ren Cang},
   title={A perturbation bound for the generalized polar decomposition},
   journal={BIT},
   volume={33},
   date={1993},
   number={2},
   pages={304--308},
   issn={0006-3835},
}

\bib{Mil:asymp}{article}{
   author={Milman, V. D.},
   title={Asymptotic properties of functions of several variables that are
   defined on homogeneous spaces},
   language={Russian},
   journal={Dokl. Akad. Nauk SSSR},
   volume={199},
   date={1971},
   pages={1247--1250},
   translation={
      journal={Soviet Math. Dokl.},
      volume={12},
      date={1971},
      pages={1277--1281},
      issn={0197-6788},
   },
}

\bib{Mil:inf-dim}{article}{
   author={Milman, V. D.},
   title={A certain property of functions defined on infinite-dimensional
   manifolds},
   language={Russian},
   journal={Dokl. Akad. Nauk SSSR},
   volume={200},
   date={1971},
   pages={781--784},
   issn={0002-3264},
}
		
\bib{Mil:heritage}{article}{
   author={Milman, V. D.},
   title={The heritage of P.\ L\'evy in geometrical functional analysis},
   note={Colloque Paul L\'evy sur les Processus Stochastiques (Palaiseau,
   1987)},
   journal={Ast\'erisque},
   number={157-158},
   date={1988},
   pages={273--301},
   issn={0303-1179},
}

\bib{MS}{book}{
   author={Milman, Vitali D.},
   author={Schechtman, Gideon},
   title={Asymptotic theory of finite-dimensional normed spaces},
   series={Lecture Notes in Mathematics},
   volume={1200},
   note={With an appendix by M. Gromov},
   publisher={Springer-Verlag, Berlin},
   date={1986},
   pages={viii+156},
   isbn={3-540-16769-2},
}
	
\bib{OzSy:pyramid}{article}{
  author={Ozawa, Ryunosuke},
  author={Shioya, Takashi},
  title={Limit formulas for metric measure invariants and phase transition property},
  note={arXiv:1402.6831, to appear in Math. Z.},
}

\bib{Sy:mmg}{book}{
   author={Shioya, Takashi},
   title={Metric measure geometry--Gromov's theory of
     convergence and concentration of metrics and measures},
   status={arXiv:1410.0428, to appear in the IRMA series of the European Mathematical Society},
}

\bib{Sy:mmlim}{article}{
   author={Shioya, Takashi},
   title={Metric measure limits of spheres and complex projective spaces},
   status={arXiv:1402.0611, preprint},
}

\bib{Watson}{article}{
   author={Watson, G. S.},
   title={Limit theorems on high-dimensional spheres and Stiefel manifolds},
   conference={
      title={Studies in econometrics, time series, and multivariate
      statistics},
   },
   book={
      publisher={Academic Press, New York},
   },
   date={1983},
   pages={559--570},
}
\bib{Zhang}{article}{
   author={Zhang, Fuzhen},
   title={Quaternions and matrices of quaternions},
   journal={Linear Algebra Appl.},
   volume={251},
   date={1997},
   pages={21--57},
   issn={0024-3795},
}
	
  \end{biblist}
\end{bibdiv}

\end{document}